\DeclareSymbolFont{SY}{U}{psy}{m}{n}
\DeclareMathSymbol{\emptyset}{\mathord}{SY}{'306}
\numberwithin{equation}{section}
\newtheorem{theorem}{Theorem}
\newtheorem{corollary}[theorem]{Corollary}
\newtheorem{lemma}[theorem]{Lemma}
\theoremstyle{definition}
\newtheorem{definition}[theorem]{Definition}
\theoremstyle{remark}
\newtheorem{remark}[theorem]{Remark}
\begin{document}

\title[Spectral analysis of an even order differential operator]
{Spectral analysis of an even order differential operator}

\author[D. M. Polyakov]
{Dmitry M. Polyakov}

\address{Dmitry M. Polyakov\newline\hspace*{9mm} Southern Mathematical Institute, Vladikavkaz Scientific Center of RAS,
Vladikavkaz, Russia}
\email{DmitryPolyakow@mail.ru}

\keywords{spectrum, even order differential operator, asymptotic behavior of eigenvalues, semigroup of operators}.

\begin{abstract}
Using the method of similar operators we study an even order differential operator with periodic, semiperiodic,
and Dirichlet boundary conditions. We obtain asymptotic formulas for eigenvalues of this operator and estimates
for its spectral decompositions and spectral projections. We also establish the asymptotic behavior of the corresponding
analytic semigroup of operators.
\end{abstract}

\maketitle

\section{Introduction}\label{intro}

We consider an even order differential operator of the following type
\[
L_{bc}=(-1)^k\frac{d^{2k}}{dx^{2k}}-q, \quad k>1,
\]
where $q$ is a potential and $bc$ stands for boundary conditions. One of the main problems of the spectral analysis of
this operator is to establish sharp enough asymptotic formulas for its eigenvalues. As a rule, it is required the continuity
of the potential $q$. But last time the case of nonsmooth potentials is considered actively by many authors.
Let us give a brief survey of the most important achievements for this case.

For $k=1$ the operators $L_{bc}$ with various boundary conditions are the Hill ones.
They were studied in details in \cite{Marchenko}, \cite{Savchuk_Shkalikov} (see also the references therein).
In \cite[Ch.~II, Theorem~2]{Naimark} it was established the asymptotic behavior of eigenvalues for this
operator with regular and strongly regular boundary conditions.
In \cite{Badanin_Korotyaev_IMRN_2011} A. Badanin and E. Korotyaev carried out spectral
analysis of even order differential operator in $L_2(\mathbb{R})$ with periodic potential from the space $L_1(\mathbb{T})$,
where $\mathbb{T}=\mathbb{R}\setminus\mathbb{Z}$.
They obtained many interesting results for this operator concerning description of its spectrum, asymptotic behavior of
eigenvalues, and spectral gaps. In the case $k=2$ similar results were obtained in \cite{Badanin_Korotyaev_IMRN_2005}
for the operator $L_{bc}$ with periodic boundary conditions and potential $q$ from the real
space $L_1(\mathbb{T})$. Fourth order differential operator of a general form was considered in papers
\cite{Badanin_Korotyaev_AA,PolyakovAA,<Polyakov1>,Kerimov}.

Asymptotic behavior of eigenvalues for the operator $L_{bc}$ in the negative Sobolev space $H^{-m}[-1, 1]$ with periodic and
semiperiodic boundary conditions and potential $q$ from $H^{-m}[-1, 1]$ were given in
\cite{<Molyboga_Mikhailets1>,<Molyboga_Mikhailets2>,<Molyboga>}.

In \cite{Veliev} O.A. Veliev considered the nonself-adjoint ordinary differential operators with periodic and semiperiodic
boundary conditions and summable complex-valued coefficients. He obtained asymptotic formulas for their eigenvalues and
eigenfunctions. He also established necessary and sufficient conditions on the coefficient under which the root functions
of these operators form a Riesz basis in $L_2[0, 1]$.

In \cite[Theorem~1]{Ahmerova} and \cite{<Menken>} it was described the asymptotic behavior of eigenvalues for an even
order differential operator with the Dirichlet boundary conditions and for the operator $L_{bc}$ in the case $k=2$ and periodic
boundary conditions, respectively.

The aim of this paper is to give a detailed spectral analysis for an even order differential operator with a nonsmooth
complex-valued potential and various boundary conditions. To do this, we use a new approach which is based on the method of
similar operators developed in the general setting in \cite{<1983>,<1987>,<2011>,Bask_Pol_2017}.
This method allows us to reduce the study to an associated operator which is simpler than the initial operator $L_{bc}$.

The paper is organized as follows. In Section~\ref{sec1} we state the main results.
In Section~\ref{sec2} we recall some basic notions for the method of similar operators and study an abstract operator
having properties similar to those of the operator $L_{bc}$ with periodic, semiperiodic, and Dirichlet boundary conditions.
In Section~\ref{sec3} the results of Section~\ref{sec2} are applied to the operator $L_{bc}$.
The main results are proven in the final Section~\ref{sec4}.

\section{Preliminaries and main results}\label{sec1}

Let $L_2[0, \omega]$ be the Hilbert space of square summable complex functions on $[0, \omega]$, $\omega>0$, with inner
product $(x, y)=\frac{1}{\omega}\int_0^\omega x(\tau)\overline{y(\tau)}\,d\tau$, $x$, $y\in L_2[0, \omega]$.
By $H^{2k}[0, \omega]$, $k>1$, we denote the Sobolev space $\{y\in L_2[0, \omega]\to\mathbb{C}:
y$ is $2k$-times differentiable, $y^{(2k-1)}$ is absolutely continuous, $y^{(2k)}\in L_2[0, \omega]\}$.

We consider the operators $L_{bc}: D(L_{bc})\subset L_2[0, \omega]\to L_2[0, \omega]$ defined by the
following differential expression
\[
l(y)= (-1)^k y^{(2k)} - qy \quad \text{with} \quad k>1 \quad \text{and} \quad q\in L_2[0, \omega]
\]
and the boundary conditions $bc$ of the following types:
\begin{enumerate}
\item[(a)] periodic $bc=per$: $y^{(j)}(0)=y^{(j)}(\omega)$, $j=0, 1, \dots, 2k-1$;

\item[(b)] semiperiodic $bc=ap$: $y^{(j)}(0)=-y^{(j)}(\omega)$, $j=0, 1, \dots, 2k-1$;

\item[(c)] Dirichlet $bc=dir$: $y(0)=y''(0)=\dots=y^{(2k-2)}(0)=0$, $y(\omega)=y''(\omega)=\dots=y^{(2k-2)}(\omega)=0$.
\end{enumerate}
Thus, $D(L_{bc})=\{y\in H^{2k}[0, \omega]: y\; \text{satisfies conditions}\; bc\}$. The operators $L_{bc}$ with the boundary
conditions (a), (b), and (c) will be denoted by $L_{per}$, $L_{ap}$, and $L_{dir}$, respectively.

The operator $L_{bc}$ may be applied to investigate vibrations of beams, plates and shells (see~\cite{Collatz}). For example
(see \cite[Sec.~1.14]{Novozhilov}), the Vlasov model of the bending cylindrical shells
gives vibration equations of the form $y^{(8)}+by=\lambda y$, where $b$ is a potential and $\lambda$ is a spectral parameter.

To state our main results, we need some notation.
The operator $L_{bc}$ can be represented in the form $L_{bc}=L_{bc}^0-Q$, where
$L_{bc}^0: D(L_{bc}^0)=D(L_{bc})\subset L_2[0, \omega]\to L_2[0, \omega]$, $L_{bc}^0y=(-1)^ky^{(2k)}$,
$k>1$, is an unperturbed operator and $Q$ is the operator of multiplication by the potential $q$. As is well known,
$L_{bc}^0$ is a self-adjoint operator with discrete spectrum.

For $bc\in\{per, ap\}$ the operators $L_{bc}$ and $L_{bc}^0$ will be denoted by $L_\theta$ and $L_\theta^0$,
where $\theta=0$ and $\theta=1$ stands for $bc=per$ and $bc=ap$, respectively.

Now we describe the spectrum $\sigma(L_{bc}^0)$ and the eigenfunctions of the operators $L_{per}^0$, $L_{ap}^0$, $L_{dir}^0$:

(a), (b): $\sigma(L_\theta^0)=\{\lambda_n, n\in\mathbb{Z}_+=\mathbb{N}\cup\{0\}\}$, where
$\lambda_n=\pi^{2k}(2n+\theta)^{2k}/\omega^{2k}$, $k>1$; the corresponding eigenfunctions are
$e_n(t)=\mathrm{e}^{-i\pi(2n+\theta)t/\omega}$, $t\in[0, \omega]$. They form an orthonormal
basis in $L_2[0, \omega]$;

(c): $\sigma(L_{dir}^0)=\{\lambda_{n, dir}, n\in\mathbb{N}\}$, where $\lambda_{n, dir}=\pi^{2k}n^{2k}/\omega^{2k}$,
$k>1$; the corresponding eigenfunctions have the form $e_{n, dir}(t)=\sqrt{2}\sin(\pi nt/\omega)$, $t\in[0, \omega]$.

Note that the operator $L_{per}^0$ has double eigenvalues (except the eigenvalue $\lambda_0=0$). The operator $L_{ap}^0$ has
double eigenvalues and the operator $L_{dir}^0$ has simple eigenvalues.

Since the potential $q$ belongs to $L_2[0, \omega]$, $q(t)=\sum_{l\in\mathbb{Z}}
q_le^{i2\pi lt/\omega}$. For the operator $L_{dir}^0$ we will use the following representation for the potential:
$q(t)=\sqrt{2}\sum_{l=1}^\infty \widetilde{q}_l\cos(\pi lt/\omega)$.

Let us agree to use the symbol $C$ for all positive constants, perhaps different one from another.

Now we state the main results of this paper.
\begin{theorem}\label{thasympt0and1H}
The operators $L_{bc}$, $bc\in\{per, ap\}$, have discrete spectrums and there exists $m\in\mathbb{Z}_+$
such that the spectrum $\sigma(L_{bc})$ has the form
\begin{equation}\label{sigmaLthetaH}
\sigma(L_{bc})=\sigma_{(m)}\cup \{\widetilde{\lambda}_n^\mp, n\geq m+1\},
\end{equation}
where $\sigma_{(m)}$ is a finite set with a number of points not exceeding $2m+1$. The eigenvalues
$\widetilde{\lambda}_n^\mp$, $n\geq m+1$, have the following asymptotic representation
\begin{flalign}\label{lambdatheta0and1*H}
\widetilde{\lambda}_n^\mp &= \Big(\frac{\pi(2n+\theta)}{\omega}\Big)^{2k}-q_0 -
\frac{2\omega^{2k}}{\pi^{2k}}\sum\limits_{\substack{j=1 \\ j\ne n}}^\infty
\frac{q_{n-j}q_{j-n}}{(2j+\theta)^{2k}-(2n+\theta)^{2k}} \nonumber\\
&\mp \Big(q_{-2n-\theta} + \frac{\omega^{2k}}{\pi^{2k}}\sum\limits_{\substack{j\in\mathbb{Z} \\ j\ne n,\; j\ne -n-\theta}}
\frac{q_{-n-j-\theta}q_{j-n}}{(2j+\theta)^{2k}-(2n+\theta)^{2k}}\Big)^\frac{1}{2}\nonumber\\
&\cdot\Big(q_{2n+\theta} + \frac{\omega^{2k}}{\pi^{2k}}\sum\limits_{\substack{j\in\mathbb{Z} \\ j\ne n,\; j\ne -n-\theta}}
\frac{q_{n+j+\theta}q_{n-j}}{(2j+\theta)^{2k}-(2n+\theta)^{2k}}\Big)^\frac{1}{2} + \xi_{bc}(n).
\end{flalign}
Here the sequence $\xi_{bc}: m+\mathbb{N}\to (0, \infty)$ satisfies the estimate:
$|\xi_{bc}(n)|\leq C\alpha_n/n^{4k-3}$, where $(\alpha_n)$ is a square summable sequence.
\end{theorem}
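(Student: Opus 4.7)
The plan is to apply the method of similar operators developed in Sections~\ref{sec2}--\ref{sec3} to the operator $L_{bc}=L_{bc}^0-Q$, which by hypothesis has already been shown similar to a block-diagonal operator adapted to the two-dimensional spectral subspaces of $L_{bc}^0$ associated with the double eigenvalues $\lambda_n$. After invoking this similarity, the spectral problem for $L_{bc}$ reduces on the invariant subspaces $\mathrm{span}\{e_n,e_{-n-\theta}\}$ (for $n$ large) to diagonalizing an explicit $2\times 2$ matrix, whose entries are polynomials in the Fourier coefficients of $q$ with coefficients determined by the resolvent of $L_{bc}^0$.

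First, I would fix the threshold $m$ furnished by the similar-operators machinery so that for all $n\geq m+1$ the spectral gap between $\lambda_n$ and its neighbors dominates the norm of the perturbation restricted to the corresponding Riesz subspace; this automatically produces the finite set $\sigma_{(m)}$ of points of $\sigma(L_{bc})$ lying inside the spectral projections of total rank at most $2m+1$, and yields the decomposition~\eqref{sigmaLthetaH}. Next I would compute the matrix of the block indexed by $n$ in the basis $\{e_n,e_{-n-\theta}\}$. The diagonal entries are of the form $\lambda_n-q_0+\Delta_n$, where $\Delta_n$ is the second-order correction coming from the inverse-gap series
\[
-\frac{2\omega^{2k}}{\pi^{2k}}\sum_{\substack{j=1\\ j\ne n}}^\infty\frac{q_{n-j}q_{j-n}}{(2j+\theta)^{2k}-(2n+\theta)^{2k}},
\]
obtained by pairing $Q$ once with the resolvent of $L_{bc}^0$ and again with $Q$, while the off-diagonal entries are $q_{-2n-\theta}+\mathcal{E}_n^-$ and $q_{2n+\theta}+\mathcal{E}_n^+$, with $\mathcal{E}_n^{\pm}$ given by the analogous second-order series indexed by $j\ne n,\ j\ne -n-\theta$ appearing in~\eqref{lambdatheta0and1*H}.

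With the $2\times 2$ block in hand, its two eigenvalues are read off by the quadratic formula
\[
\widetilde{\lambda}_n^{\mp}=\tfrac{a_n+d_n}{2}\mp\sqrt{\tfrac{(a_n-d_n)^2}{4}+b_n c_n},
\]
where $a_n,d_n$ are the diagonal entries and $b_n,c_n$ the off-diagonal ones. Since $a_n=d_n$ up to terms already absorbed into $\xi_{bc}(n)$, the $(a_n-d_n)^2/4$ contribution is of lower order and can be moved into the remainder, leaving the symmetric expression displayed in~\eqref{lambdatheta0and1*H}. The sign $\mp$ is simply the choice of branch coming from the two eigenvalues of each block.

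The main obstacle will be to justify the estimate $|\xi_{bc}(n)|\leq C\alpha_n/n^{4k-3}$ with $(\alpha_n)\in\ell^2$. This requires combining two ingredients: first, $\ell^2$-summability of $(q_l)$, so that every first-order sum over off-diagonal Fourier coefficients yields a square-summable sequence in $n$; second, gains from the denominators $(2j+\theta)^{2k}-(2n+\theta)^{2k}$, which, after separating the near-resonant terms $j\approx \pm n$ from the rest and applying the elementary bound $|(2j+\theta)^{2k}-(2n+\theta)^{2k}|\gtrsim n^{2k-1}|j-n|$, produce the factor $n^{-(4k-3)}$ only after \emph{three} uses of the resolvent have been accounted for---two of which are retained in the explicit main term of~\eqref{lambdatheta0and1*H}, while the third and all higher-order iterations of the similar-operators fixed point contribute solely to $\xi_{bc}(n)$. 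The careful bookkeeping of these iterations, together with Hausdorff--Young/Cauchy--Schwarz to extract a square-summable sequence $(\alpha_n)$ out of the Fourier tail of $q$, is where the quantitative work of the proof will lie.
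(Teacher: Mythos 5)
Your proposal follows essentially the same route as the paper's proof: reduce $L_{bc}$ via the similarity transformations of Section~\ref{sec3} to a block-diagonal operator, identify each $2\times 2$ block as the anti-diagonal matrix of Fourier coefficients $q_{\mp 2n-\theta}$ plus the explicit second-order correction coming from $Q\Gamma_{m,bc}Q$ plus the higher-order remainder $\mathbb{P}_n(X_*-B)\mathbb{P}_n$, and then read off the eigenvalues, with the remainder controlled exactly as in (\ref{cnpn}) and (\ref{bpn}) to give $C\alpha_n/n^{4k-3}$. The only cosmetic difference is that you apply the quadratic formula directly to the full block, whereas the paper first conjugates by an explicit matrix $U_n$ that diagonalizes the anti-diagonal part and then perturbs; the two computations are equivalent.
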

\begin{corollary}
Let $q$ be a real-valued function from $L_2[0, \omega]$. Then the eigenvalues
$\widetilde{\lambda}_n^\mp$, $n\geq m+1$, have the form
\begin{flalign}\label{lambdatheta0and1*H}
\widetilde{\lambda}_n^\mp &= \Big(\frac{\pi(2n+\theta)}{\omega}\Big)^{2k}-q_0 -
\frac{2\omega^{2k}}{\pi^{2k}}\sum\limits_{\substack{j=1 \\ j\ne n}}^\infty
\frac{q_{n-j}q_{j-n}}{(2j+\theta)^{2k}-(2n+\theta)^{2k}}\pm \nonumber\\
&\pm \Big(q_{2n+\theta} + \frac{\omega^{2k}}{\pi^{2k}}\sum\limits_{\substack{j\in\mathbb{Z} \\ j\ne n,\; j\ne -n-\theta}}
\frac{q_{n+j+\theta}q_{n-j}}{(2j+\theta)^{2k}-(2n+\theta)^{2k}}\Big) + \xi_{bc}(n).
\end{flalign}
Here the sequence $\xi_{bc}: m+\mathbb{N}\to (0, \infty)$ satisfies the estimate:
$|\xi_{bc}(n)|\leq C\alpha_n/n^{4k-3}$, where $(\alpha_n)$ is a square summable sequence.
\end{corollary}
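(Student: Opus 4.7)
The plan is to specialize Theorem~\ref{thasympt0and1H} to the real-valued case by exploiting the Hermitian symmetry of the Fourier coefficients: for real $q\in L_2[0,\omega]$ one has $q_{-l}=\overline{q_l}$ for every $l\in\mathbb Z$.

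The first step is to recognize the two bracketed expressions appearing under the square roots in the formula of Theorem~\ref{thasympt0and1H} as complex conjugates of each other. Writing
\[
A_n\;:=\;q_{2n+\theta}+\frac{\omega^{2k}}{\pi^{2k}}\sum_{\substack{j\in\mathbb Z\\ j\ne n,\,j\ne -n-\theta}}\frac{q_{n+j+\theta}\,q_{n-j}}{(2j+\theta)^{2k}-(2n+\theta)^{2k}},
\]
the denominators are real (since $2k$ is even) and the identity $q_{-l}=\overline{q_l}$ applied termwise shows that the first bracketed factor in Theorem~\ref{thasympt0and1H} equals $\overline{A_n}$; in particular, $q_{-2n-\theta}=\overline{q_{2n+\theta}}$, $q_{-n-j-\theta}=\overline{q_{n+j+\theta}}$ and $q_{j-n}=\overline{q_{n-j}}$.

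Next, I would compute the product of the two square roots appearing in the theorem: with a consistent branch choice one has $\sqrt{\overline{A_n}}\cdot\sqrt{A_n}=|A_n|$. Since $q$ is real and $bc\in\{per,ap\}$, integration by parts shows that $L_{bc}$ is self-adjoint, so its spectrum is real. Inserting this simplification into the asymptotic formula of Theorem~\ref{thasympt0and1H} and absorbing the phase of $A_n$ into the overall $\pm$ sign in front yields
\[
\widetilde{\lambda}_n^\mp\;=\;\Bigl(\frac{\pi(2n+\theta)}{\omega}\Bigr)^{2k}-q_0 - \frac{2\omega^{2k}}{\pi^{2k}}\sum_{\substack{j=1\\ j\ne n}}^{\infty}\frac{q_{n-j}q_{j-n}}{(2j+\theta)^{2k}-(2n+\theta)^{2k}}\pm A_n+\xi_{bc}(n),
\]
which is exactly the claimed formula. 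The remainder estimate $|\xi_{bc}(n)|\le C\alpha_n/n^{4k-3}$ is inherited verbatim from Theorem~\ref{thasympt0and1H}, since the error term and its bound are unaffected by the reality of $q$.

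The only subtle point will be the branch choice for the complex square root and the reconciliation of $|A_n|$ with the $\pm A_n$ appearing in the statement; this is handled by the identity $\overline{A_n}\cdot A_n=|A_n|^2\ge 0$, whose unique non-negative square root is $|A_n|$, so that any residual phase ambiguity is absorbed into the overall $\pm$ sign in front of the expression. No other ingredient beyond Hermitian symmetry of Fourier coefficients and the self-adjointness of $L_{bc}$ is required.
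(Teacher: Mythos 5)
Your overall route is the intended one: the paper offers no separate proof of this corollary, and it is meant to follow from Theorem~\ref{thasympt0and1H} exactly as you begin — by observing that for real $q$ the Fourier coefficients satisfy $q_{-l}=\overline{q_l}$, so that the two bracketed factors under the square roots are complex conjugates of one another, and hence their product of square roots is $\sqrt{\overline{A_n}A_n}=|A_n|$ up to a sign that is harmlessly absorbed by the $\mp$ already present. That part of your computation (including the remark that the denominators are real and that the remainder estimate is inherited verbatim) is correct.

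The gap is in your last step. From $\sqrt{\overline{A_n}}\sqrt{A_n}=|A_n|$ you conclude the eigenvalues are $c_{11}\pm A_n$ ``by absorbing the phase of $A_n$ into the overall $\pm$ sign.'' A phase $e^{i\arg A_n}$ is not a sign: it can be absorbed into $\pm$ only when $A_n$ is real, i.e.\ when $\arg A_n\in\{0,\pi\}$. For a generic real-valued $q$ the number $A_n=q_{2n+\theta}+c_{21}$ is genuinely complex (already $q_{2n+\theta}$ is complex unless $q$ is even), so $\{c_{11}+|A_n|,\,c_{11}-|A_n|\}\neq\{c_{11}+A_n,\,c_{11}-A_n\}$ as sets, and the two descriptions differ by more than the error term whenever $|\operatorname{Im}A_n|$ exceeds $C\alpha_n/n^{4k-3}$. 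Your own auxiliary observation makes this visible: since $L_{bc}$ is self-adjoint for real $q$, the eigenvalues are real, which is consistent with $c_{11}\pm|A_n|$ (all other terms in the expansion are manifestly real, e.g.\ $q_{n-j}q_{j-n}=|q_{n-j}|^2$) but not with $c_{11}\pm A_n$ for complex $A_n$. What your argument actually proves is the formula with $\pm\bigl|q_{2n+\theta}+\cdots\bigr|$ in place of $\pm\bigl(q_{2n+\theta}+\cdots\bigr)$; to land on the statement as printed you must either insert the modulus or add a hypothesis forcing $A_n\in\mathbb{R}$ (e.g.\ $q$ real and even). As written, the final identification is not justified.
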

\begin{theorem}\label{thasympvarH}
Let $q$ be a function of bounded variation; then the asymptotic representation (\ref{lambdatheta0and1*H}) holds
and the sequence $\xi_{bc}$ satisfies the estimate
$|\xi_{bc}(n)|\leq C/n^{4k-2}$, $n\geq m+1$, where $m$ is defined in Theorem~\ref{thasympt0and1H}.
\end{theorem}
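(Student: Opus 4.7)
The plan is to revisit the similar operator construction used in the proof of Theorem~\ref{thasympt0and1H} and sharpen each estimate on the Fourier coefficients $(q_l)$ of $q$ that feeds into the remainder $\xi_{bc}(n)$. The asymptotic expansion (\ref{lambdatheta0and1*H}) itself, which records the first few explicit terms of the similarity transformation, depends on $q$ only formally and is therefore inherited verbatim; only the bound on $\xi_{bc}(n)$ has to be reworked.

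The key new input is the pointwise decay $|q_l|\le V(q)/(2\pi|l|)$, $l\ne 0$, available for $q\in BV[0,\omega]$ after integration by parts in $q_l=\frac{1}{\omega}\int_0^\omega q(\tau)\mathrm{e}^{-i2\pi l\tau/\omega}\,d\tau$. In Theorem~\ref{thasympt0and1H} the remainder $\xi_{bc}(n)$ was dominated by tails of the form $\bigl(\sum_{|l|>n}|q_l|^2\bigr)^{1/2}$ produced by a Cauchy--Schwarz step; it is precisely these tails that supplied the $\ell^2$-sequence $(\alpha_n)$ and the estimate $C\alpha_n/n^{4k-3}$.

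For $q\in BV$ I would bypass the Cauchy--Schwarz step and insert the pointwise bound $|q_l|\le C/|l|$ directly into the higher-order remainder sums arising in the Neumann-type series of the method of similar operators. Using the standard lower bound $|(2j+\theta)^{2k}-(2n+\theta)^{2k}|\ge Cn^{2k-1}|j-n|$ for $|j|\ne n$, each elementary remainder, such as
\[
\sum_{\substack{j\in\mathbb{Z}\\ j\ne n,\; j\ne -n-\theta}}
\frac{|q_{n-j}|\,|q_{j-n}|}{|(2j+\theta)^{2k}-(2n+\theta)^{2k}|},
\]
and its multi-fold analogues, becomes a convergent sum that gains one additional factor $1/n$ compared with the $L_2$-estimate. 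Propagating this gain across all terms of the series upgrades $C\alpha_n/n^{4k-3}$ into $C/n^{4k-2}$.

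The main obstacle is the bookkeeping: one must check that every subseries produced by the similar operator construction, including the contributions near the resonant indices $j=n$ and $j=-n-\theta$ where the linear lower bound on the denominator degenerates, as well as the multi-index sums appearing at higher Neumann orders, loses exactly one extra factor of $1/n$ under the substitution $|q_l|\mapsto C/|l|$ and no more. Once this uniform improvement has been verified term by term, the representation (\ref{lambdatheta0and1*H}) combined with the sharper estimate $|\xi_{bc}(n)|\le C/n^{4k-2}$ for $n\ge m+1$ follows along the same lines as in Theorem~\ref{thasympt0and1H}.
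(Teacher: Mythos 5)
Your key input --- the decay $|q_l|\le C/(|l|+1)$ of the Fourier coefficients of a function of bounded variation --- is exactly the one the paper uses (citing Zygmund), and it does yield the claimed gain of one power of $n$. But the route you propose is far heavier than necessary, and the part you yourself flag as the ``main obstacle'' (re-verifying, term by term over the whole Neumann-type series, that every subseries loses exactly one factor of $1/n$) is work that the statement of Theorem~\ref{thasympt0and1H} has already done for you. In the paper the remainder is not merely ``$C\alpha_n/n^{4k-3}$ for some square-summable $(\alpha_n)$'': the sequence is the explicit quantity $\alpha(2n+\theta)$ with
\[
\alpha(n)=\bigg(\frac{\|q\|_2^2}{n^2}+\sum_{|p|\le n,\;p\ne 0}\frac{|q_{p-n}|^2}{p^2}\bigg)^{1/2},
\]
produced in the proof of (\ref{vgvpnH}) and carried through (\ref{bpn}) and (\ref{pnx-bpnH}). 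Consequently the paper's proof of Theorem~\ref{thasympvarH} is two lines: substitute $|q_l|\le C/(|l|+1)$ into this formula, split the sum at $|p|\le n/2$ versus $n/2<|p|\le n$ to get $\alpha(2n+\theta)\le C/(n+1)$, and multiply by the $n^{-(4k-3)}$ already present. Your description of $(\alpha_n)$ as coming from tails $\bigl(\sum_{|l|>n}|q_l|^2\bigr)^{1/2}$ is also not quite what the paper has (only the first summand $\|q\|_2^2/n^2$ arises from such a tail; the dominant piece is the weighted near-diagonal convolution sum). So: the idea is correct and would eventually succeed, but as written your argument leaves its hardest step unexecuted, whereas exploiting the explicit form of $\alpha$ reduces the whole theorem to an elementary estimate of a single concrete sequence.
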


Theorem~\ref{thasympvarH} also holds if the potential $q$ is a smooth function and improves a similar result of
H.~Menken \cite[Theorem 3.1]{<Menken>}.
\begin{theorem}\label{thasympt(0,1)H}
The operator $L_{dir}$ has a discrete spectrum. There exists $m\in\mathbb{N}$ such that its spectrum
is represented in the form (\ref{sigmaLthetaH}), where $\sigma_{(m)}$ is a finite set with a number of points not exceeding $m$
and $\sigma_n=\{\widetilde{\lambda}_{n, dir}\}$, $n\geq m+1$.
For the eigenvalues $\widetilde{\lambda}_{n, dir}$, $n\geq m+1$, we have the following representation
\begin{flalign}\label{lambdantheta01H}
\widetilde{\lambda}_{n, dir}=\Big(\frac{\pi n}{\omega}\Big)^{2k} &-  \frac{1}{\omega}\int_0^\omega q(t)\,dt +
\frac{1}{\omega}\int_0^\omega q(t)\cos\frac{2\pi n}{\omega}t\,dt \nonumber\\
&- \frac{\omega^{2k}}{2\pi^{2k}}
\sum\limits_{\substack{j=1 \\ j\ne n}}^\infty\frac{(\widetilde{q}_{|n-j|}-\widetilde{q}_{n+j})^2}{j^{2k}-n^{2k}} +
\eta_{dir}(n), \quad n\geq m+1,
\end{flalign}
where the sequence $\eta_{dir}:\{n\in\mathbb{N}\;|\;n\geq m+1\} \to (0, \infty)$ satisfies the estimate:
$|\eta_{dir}(n)|\leq C\beta_n/n^{4k-3}$ with some  square summable sequence $(\beta_n)$.

If $q$ is a function of bounded variation (or smooth function), then we have
\begin{flalign}\label{qdirbondvar}
\widetilde{\lambda}_{n, dir} &=\Big(\frac{\pi n}{\omega}\Big)^{2k} - \frac{1}{\omega}\int_0^\omega q(t)\,dt +
\frac{1}{\omega}\int_0^\omega q(t)\cos\frac{2\pi n}{\omega}t\,dt \nonumber\\
&- \frac{\omega^{2k}}{2\pi^{2k}}\sum\limits_{\substack{j=1 \\ j\ne n}}^\infty
\frac{(\widetilde{q}_{|n-j|}-\widetilde{q}_{n+j})^2}{j^{2k}-n^{2k}} +
\mathcal{O}(n^{-4k+2}), \quad n\geq m+1.
\end{flalign}
\end{theorem}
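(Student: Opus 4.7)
The plan is to specialize the abstract similarity machinery of Section~\ref{sec2}, whose hypotheses for $L_{dir}$ are to be verified in Section~\ref{sec3}, to the decomposition $L_{dir}=L_{dir}^0-Q$ expressed in the orthonormal basis $\{e_{n,dir}\}_{n\ge 1}$. Since the eigenvalues $\lambda_{n,dir}=(\pi n/\omega)^{2k}$ of $L_{dir}^0$ are simple, well spaced (with gaps of order $n^{2k-1}$), and $Q$ is a bounded multiplier which is relatively compact with respect to $L_{dir}^0$, the abstract theorem will deliver an integer $m\in\mathbb{N}$ together with a bounded similarity transform that conjugates $L_{dir}$, above a transient spectral block of rank at most $m$, to a diagonal operator. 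This already gives the decomposition \eqref{sigmaLthetaH} with one-point outer components $\sigma_n=\{\widetilde{\lambda}_{n,dir}\}$, together with a Neumann-type expansion
\[
\widetilde{\lambda}_{n,dir}=\lambda_{n,dir}-(Qe_{n,dir},e_{n,dir})+\sum_{j\ne n}\frac{|(Qe_{j,dir},e_{n,dir})|^2}{\lambda_{n,dir}-\lambda_{j,dir}}+R_n,\qquad n\ge m+1,
\]
where $R_n$ collects the contribution of third and higher iterates of the similarity.

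Next I would extract the explicit form of the first two corrections. From $e_{n,dir}(t)=\sqrt{2}\sin(\pi nt/\omega)$ and the identity $2\sin^2 A=1-\cos 2A$ one obtains immediately
\[
(Qe_{n,dir},e_{n,dir})=\frac{1}{\omega}\int_0^\omega q(t)\,dt-\frac{1}{\omega}\int_0^\omega q(t)\cos\!\frac{2\pi nt}{\omega}\,dt,
\]
which accounts for the two integral terms in \eqref{lambdantheta01H}. The product-to-sum identity $2\sin A\sin B=\cos(A-B)-\cos(A+B)$, combined with the cosine expansion $q(t)=\sqrt{2}\sum_l\widetilde{q}_l\cos(\pi lt/\omega)$ and the orthogonality relations of the cosine system, yields $(Qe_{j,dir},e_{n,dir})=\tfrac{1}{\sqrt{2}}(\widetilde{q}_{|n-j|}-\widetilde{q}_{n+j})$ for $j\ne n$. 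Substituting this together with $\lambda_{n,dir}-\lambda_{j,dir}=(\pi/\omega)^{2k}(n^{2k}-j^{2k})$ into the second-iterate sum above transforms it into precisely $-\tfrac{\omega^{2k}}{2\pi^{2k}}\sum_{j\ne n}(\widetilde{q}_{|n-j|}-\widetilde{q}_{n+j})^2/(j^{2k}-n^{2k})$, the principal explicit sum in \eqref{lambdantheta01H}.

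The remaining, and main, technical obstacle is to control the tail $R_n=\eta_{dir}(n)$ coming from the higher iterates. By the abstract estimates of Section~\ref{sec2}, these iterates appear as multilinear sums in the matrix elements $(Qe_{j,dir},e_{n,dir})$ divided by products of resonance denominators $\lambda_{n,dir}-\lambda_{j,dir}$. I would split the summation indices into a resonant zone $\{|j-n|\le n/2\}$, where $|\lambda_{n,dir}-\lambda_{j,dir}|\asymp n^{2k-1}|n-j|$ and the accompanying Fourier coefficient is of the form $\widetilde{q}_{|n-j|}$ near zero argument, and a non-resonant zone where the denominators are comparable to $n^{2k}+j^{2k}$. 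In the resonant zone the $\ell_2$-summability of $(\widetilde{q}_l)$ yields, via Cauchy--Schwarz, a bound of the form $\beta_n/n^{2k-1}$ with $(\beta_n)\in\ell_2$; the non-resonant zone gives an even sharper pointwise bound. Combined with the additional factor $n^{-(2k-2)}$ furnished by the extra resonance denominator in each higher iterate, this produces the claimed bound $|\eta_{dir}(n)|\le C\beta_n/n^{4k-3}$. The delicate point is to verify that the square summability of $(\widetilde{q}_l)$ is genuinely inherited by $(\beta_n)$ after the convolution against the weight $1/(j^{2k}-n^{2k})$.

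Finally, when $q$ is of bounded variation (or smooth) the coefficients satisfy $|\widetilde{q}_l|\le C/l$. Substituting this pointwise bound into the same resonant/non-resonant decomposition replaces the Cauchy--Schwarz step by an elementary convolution estimate, and the resulting sums reduce to standard Riemann-sum comparisons which yield the improved pointwise bound $\mathcal{O}(n^{-4k+2})$ stated in \eqref{qdirbondvar}, completing the proof.
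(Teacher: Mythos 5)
Your proposal follows essentially the same route as the paper: the two-stage similarity transformation of Sections~\ref{sec2}--\ref{sec3} reduces $L_{dir}$ to a diagonal operator on the rank-one eigenspaces, the diagonal entry $(Qe_{n,dir},e_{n,dir})$ and the off-diagonal entries $\tfrac{1}{\sqrt2}(\widetilde q_{|n-j|}-\widetilde q_{n+j})$ give the two integral terms and the second-order sum exactly as you compute them, and the remainder is bounded by $C\beta_n/n^{4k-3}$ via the same combination of a resonance-gap factor $n^{-(2k-1)}$ with a Cauchy--Schwarz column estimate $\beta_n/n^{2k-2}$ (the paper's Theorem~\ref{th3} together with the bounds (\ref{bpndir})--(\ref{pndirb})). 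The bounded-variation refinement via $|\widetilde q_l|\le C/l$ is likewise the paper's argument, so the proposal is correct and matches the paper's proof in all essentials.
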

From Theorem~\ref{thasympt(0,1)H} it follows directly
\begin{corollary}\label{corSpectr}
The operator $L_{dir}$ is a spectral operator (in the Dunford sense) (see \cite{<Danford>}).
\end{corollary}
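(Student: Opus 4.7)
The plan is to read the spectral operator property directly off the similarity transformation that underlies Theorem~\ref{thasympt(0,1)H}. Recall that the eigenvalues of $L_{dir}^0$ are simple and its normalized eigenfunctions $e_{n,dir}$ form an orthonormal basis of $L_2[0,\omega]$; consequently each eigenprojection $P_n^0$ is a rank-one orthogonal projection and for any finite set of indices $\mathcal{N}\subset\mathbb{N}$ the partial sum $\sum_{n\in\mathcal{N}} P_n^0$ is itself an orthogonal projection, of norm at most~$1$.

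First, I would extract from the construction used in Sections~\ref{sec2}--\ref{sec3} to prove Theorem~\ref{thasympt(0,1)H} the data of the similarity: a bounded invertible operator $U$ on $L_2[0,\omega]$ and a bounded operator $X$ respecting the decomposition $L_2[0,\omega]=H_{(m)}\oplus\bigoplus_{n\geq m+1}\mathrm{span}\{e_{n,dir}\}$, with $\dim H_{(m)}\leq m$, such that $L_{dir}=U(L_{dir}^0-X)U^{-1}$. Because the eigenvalues of $L_{dir}^0$ are simple and $X$ acts as a scalar on each one-dimensional summand $\mathrm{span}\{e_{n,dir}\}$ for $n\geq m+1$, the spectral projection of $L_{dir}^0-X$ onto the eigenvalue of the $n$-th summand is exactly $P_n^0$. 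The remaining finite part of $\sigma(L_{dir})$ sits in the finite-dimensional invariant subspace $H_{(m)}$ and contributes at most $m$ extra spectral projections.

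Next, I would transport everything through the similarity: set $P_n=UP_n^0U^{-1}$ for $n\geq m+1$ and use the finitely many spectral projections inside $H_{(m)}$ for the low-index eigenvalues. For any finite $\mathcal{N}\subset\{n\in\mathbb{N}:n\geq m+1\}$,
\[
\Big\|\sum_{n\in\mathcal{N}}P_n\Big\|\leq\|U\|\,\|U^{-1}\|\,\Big\|\sum_{n\in\mathcal{N}}P_n^0\Big\|\leq\|U\|\,\|U^{-1}\|,
\]
and adjoining the finitely many low-index projections raises this bound only by an additive constant. This uniform boundedness of partial sums of pairwise disjoint spectral projections, together with discreteness of the spectrum and completeness of the associated root lineals (both inherited from $L_{dir}^0$ through the similarity), is precisely Dunford's criterion for $L_{dir}$ to be a spectral operator; see~\cite{<Danford>}.

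The main obstacle is purely bookkeeping: one has to make sure that the transformation $U$ produced by the method of similar operators is genuinely bounded with bounded inverse on $L_2[0,\omega]$ (not merely asymptotically close to the identity) and that $X$ truly respects the one-dimensional block decomposition for $n\geq m+1$. Both facts are already built into the construction carried out for Theorem~\ref{thasympt(0,1)H}, so no new analytic estimates are needed to reach the corollary.
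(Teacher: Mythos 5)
Your argument is correct and is essentially the route the paper intends: the paper derives the corollary directly from Theorem~\ref{thasympt(0,1)H}, whose proof rests on the similarity $L_{dir}=U(\widetilde{L}_{dir}^0-J_{m,dir}X_*)U^{-1}$ with $U=(I+\Gamma_{l,dir}Q)(I+\Gamma_{m,dir}X_*)$ bounded and boundedly invertible, and with the transformed operator acting diagonally on the rank-one blocks $\mathrm{span}\{e_{n,dir}\}$ for $n\geq m+1$. Your transport of the orthogonal projections $P_{n,dir}$ through $U$ to get uniformly bounded, countably additive spectral projections is exactly the fleshed-out version of the paper's one-line deduction.
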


To state the next result, we need some additional notation.

By $\mathbb{P}_n$, $n\in\mathbb{Z}_+$, denote the Riesz projections constructed for the sets
$\{\pi^{2k}(2n+\theta)^{2k}/\omega^{2k}\}$. Next, by $P_{n, dir}$, $n\in\mathbb{N}$, denote the Riesz projections
constructed for $\{\pi^{2k}n^{2k}/\omega^{2k}\}$. Hence, $L_{bc}^0\mathbb{P}_n=\lambda_n\mathbb{P}_n$,
$n\in\mathbb{Z}_+$, for $bc\in\{per, ap\}$, and $L_{dir}^0P_{n, dir}=\lambda_{n, dir}P_{n, dir}$, $n\in\mathbb{N}$. For all
$x\in L_2[0, \omega]$ these projections have the following form:
\begin{flalign}
\textup{(a), (b):}\; &\mathbb{P}_nx=P_{-n-\theta}x+P_nx=(x, e_{-n-\theta})e_{-n-\theta}+(x, e_n)e_n, \quad n\in\mathbb{Z}_+;
\nonumber \\
&\mathbb{P}_0x=P_0x=(x, e_0)e_0\quad \text{for}\quad \theta=0\quad \text{and}\quad n=0;\label{pndef}\\
\textup{(c):}\; &P_{n, dir}x=(x, e_{n, dir})e_{n, dir}, \quad n\in\mathbb{N}.\label{pndirdef}
\end{flalign}

Let $m$ be as in Theorem~\ref{thasympt0and1H} or Theorem~\ref{thasympt(0,1)H} (for $bc\in\{per, ap\}$ or $bc=dir$, respectively).
We denote by $\mathbb{P}_{(m)}$ the projection $\sum_{j=0}^m\mathbb{P}_j$ for $bc\in\{per, ap\}$ and by $P_{(m)}$
the projection $\sum_{j=1}^mP_{j, dir}$ for $bc=dir$.

Let $bc\in\{per, ap\}$ and $\Omega\subset\mathbb{Z}_+\setminus\{0, \dots, m\}$. For the set
$\Delta=\Delta(\Omega)=\{\lambda_n, n\in\Omega\}$ the Riesz projection $P(\Delta, L_{bc}^0)$ is defined as
$P(\Delta, L_{bc}^0)x=\sum_{n\in\Omega}\mathbb{P}_nx$, $x\in L_2[0, \omega]$.
We consider also the set $\widetilde{\Delta}=\widetilde{\Delta}(\Omega)=\cup_{n\in\Omega}\sigma_n$, where $\sigma_n$ is defined
in Theorem~\ref{thasympt0and1H}, and denote by $\widetilde{\mathbb{P}}_n$ the Riesz projection constructed for
$\sigma_n$, $n\geq m+1$. Then the projection $P(\widetilde{\Delta}, L_{bc})$ is defined as
$P(\widetilde{\Delta}, L_{bc})x=\sum_{n\in\Omega}\widetilde{\mathbb{P}}_nx$, $x\in L_2[0, \omega]$.

Similarly, define the Riesz projections $P(\Delta, L_{dir}^0)$, $P(\widetilde{\Delta}, L_{dir})$:
$P(\Delta, L_{dir}^0)x=\sum_{n\in\Omega}P_{n, dir}x$, \linebreak
$P(\widetilde{\Delta}, L_{dir})x=\sum_{n\in \Omega}\widetilde{P}_{n, dir}x$, $x\in L_2[0, \omega]$.
In this case, $\Omega\subset\mathbb{N}\setminus\{1, \dots, m\}$ and $\widetilde{P}_{n, dir}$ is the Riesz projection constructed
for the singleton $\sigma_n=\{\widetilde{\lambda}_{n, dir}\}$, $n\geq m+1$. .

By $\|\cdot\|_2$ we denote the norm in the ideal $\mathfrak{S}_2(L_2[0, \omega])$ of Hilbert-Schmidt operators
(see~\cite[Ch.~3, Sec.~9]{<Gohberg>}).
\begin{theorem}\label{thPPH}
Let $m$ be as in Theorem~\ref{thasympt0and1H} or in Theorem~\ref{thasympt(0,1)H}.
For every subset $\Omega\subset \{m+1, m+2, \dots\}$ and $d(\Omega)=\min\limits_{n\in \Omega}n$ the following
estimates hold:
\[
\|P(\widetilde{\Delta}, L_{bc})-P(\Delta, L_{bc}^0)\|_2\leq C/d^{2k-3/2}(\Omega).
\]
\end{theorem}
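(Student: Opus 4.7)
The plan is to reduce the estimate to a single Hilbert--Schmidt bound via the similarity transform constructed in Section~\ref{sec3}. The method of similar operators, applied to $L_{bc} = L_{bc}^0 - Q$, delivers a bounded invertible operator $U = I + \Gamma X_{*}$ on $L_2[0,\omega]$ together with an operator $X_{*} \in \mathfrak{S}_2(L_2[0,\omega])$ whose ``diagonal'' part $JX_{*}$ commutes with each Riesz projection $\mathbb{P}_n$ (respectively $P_{n,dir}$) for $n \geq m+1$, and such that $L_{bc} = U^{-1}(L_{bc}^0 - JX_{*})U$. Here $\Gamma$ is the transformer whose matrix entries in the eigenbasis of $L_{bc}^0$ are $(\Gamma F)_{n,j} = F_{n,j}/(\lambda_n - \lambda_j)$ for $n \neq j$ and zero on the diagonal. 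Since $JX_{*}$ preserves each eigenspace of $L_{bc}^0$ on the tail $n \geq m+1$, its spectral projection for $\Delta = \Delta(\Omega)$ coincides with $P(\Delta,L_{bc}^0) = \sum_{n \in \Omega}\mathbb{P}_n$ whenever $\Omega \subset \{m+1,m+2,\dots\}$. Writing $P \equiv P(\Delta,L_{bc}^0)$, conjugation by $U$ therefore gives
\[
P(\widetilde{\Delta},L_{bc}) - P(\Delta,L_{bc}^0) \;=\; U^{-1}PU - P \;=\; U^{-1}[P,\Gamma X_{*}].
\]

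Because $U^{-1}$ is uniformly bounded in operator norm by the smallness condition $\|\Gamma X_{*}\| < 1$ built into the method, it suffices to estimate $\|[P,\Gamma X_{*}]\|_2$. The commutator $[P,\Gamma X_{*}]$ is supported on those pairs of indices $(n,j)$ with exactly one of $n,j$ in $\Omega$, and its matrix entries are, to leading order, $q_{n-j}/(\lambda_n - \lambda_j)$. Using the mean-value estimate $|\lambda_n - \lambda_j| \geq C|n-j|\max(n,j)^{2k-1}$ together with $d(\Omega) = \min_{n\in\Omega}n$, Parseval's identity yields
\[
\|[P,\Gamma X_{*}]\|_2^2 \;\leq\; C\sum_{n \in \Omega}\frac{1}{n^{4k-2}}\sum_{j\in\mathbb{Z},\,j\ne n}\frac{|q_{n-j}|^2}{(n-j)^2} \;\leq\; \frac{C\|q\|_{L_2}^2}{d(\Omega)^{4k-3}},
\]
which is precisely the claimed bound $C/d(\Omega)^{2k-3/2}$. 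The analogous argument for $L_{dir}$ proceeds verbatim with $q_{n-j}$ replaced by the Dirichlet matrix coefficients $\widetilde{q}_{|n-j|} - \widetilde{q}_{n+j}$ arising from the sine expansion of $Q$.

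The main obstacle is the preparatory step: verifying rigorously that $JX_{*}$ is exactly block-diagonal with respect to $\{\mathbb{P}_n\}_{n \geq m+1}$ (respectively $\{P_{n,dir}\}$), and that the higher-order contributions $X_{*} - JQ$ produced by the fixed-point iteration of the nonlinear equation of the method do not spoil the $1/d(\Omega)^{4k-3}$ decay of the Parseval sum above. This requires controlling the full series defining $X_{*}$ in $\mathfrak{S}_2$-norm by means of the small-divisor estimates for $\Gamma$ and the Hilbert--Schmidt bounds for the multiplier $Q$ that were already developed for the proofs of Theorems~\ref{thasympt0and1H}--\ref{thasympt(0,1)H}. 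Once these are in place, the proof reduces to the Parseval calculation sketched above.
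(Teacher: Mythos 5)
Your argument is essentially the paper's: conjugate $P(\Delta,L_{bc}^0)$ by the similarity transformation, reduce the difference of projections to Hilbert--Schmidt norms of the columns $(\Gamma\,\cdot)\mathbb{P}_n$, $n\in\Omega$, and sum the column bounds $C/n^{2k-1}$ over $n\ge d(\Omega)$ to get $C/d^{4k-3}(\Omega)$ for the squared norm, exactly as in the paper's estimate of $\|\mathcal{U}P(\Delta,L_{bc}^0)\|_2$. The only cosmetic difference is that the paper's similarity transformation is the composition $(I+\Gamma_{l,bc}Q)(I+\Gamma_{m,bc}X_*)$ (a preliminary step is needed since $Q\notin\mathfrak{S}_2(\mathcal{H})$), so your single commutator $U^{-1}[P,\Gamma X_*]$ becomes a short finite expansion in $\mathcal{U}P$ and $\mathcal{V}P$, and the ``higher-order'' control you flag as the main obstacle is supplied by the already-proved bounds (\ref{pngvH}) and (\ref{gx}).
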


Theorem~\ref{thPPH} implies directly

\begin{corollary}\label{cor3H}
The following estimates of the spectral decompositions for the operators $L_{bc}$ and $L_{bc}^0$ hold:
\[
\bigg\|P(\sigma_{(m)}, L_{bc})+\sum_{j=m+1}^n\widetilde{\mathbb{P}}_j -
\sum_{j=0}^n\mathbb{P}_j\bigg\|_2\leq C/n^{2k-3/2}, \; n\geq m+1, \; bc\in\{per, ap\},
\]
and
\[
\bigg\|P(\sigma_{(m)}, L_{dir}) + \sum_{j=m+1}^n \widetilde{P}_{j, dir} - \sum_{j=1}^nP_{j, dir}\bigg\|_2\leq
C/n^{2k-3/2}, \quad n\geq m+1,
\]
where $\sigma_{(m)}$ is the set from representation (\ref{sigmaLthetaH}).
\end{corollary}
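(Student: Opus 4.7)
The plan is to read off both estimates directly from Theorem~\ref{thPPH} applied to the tail set $\Omega_n := \{n+1, n+2, \ldots\}$ for each fixed $n \geq m+1$. For this choice $d(\Omega_n) = n+1$, so in the case $bc \in \{per, ap\}$ Theorem~\ref{thPPH} yields
\[
\|P(\widetilde{\Delta}(\Omega_n), L_{bc}) - P(\Delta(\Omega_n), L_{bc}^0)\|_2 \leq C/n^{2k-3/2}.
\]

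The only remaining step is to identify the operator under the norm in the corollary with the difference above. Since the eigenfunctions $(e_n)_{n \in \mathbb{Z}_+}$ of the self-adjoint operator $L_{bc}^0$ form an orthonormal basis of $L_2[0, \omega]$, the resolution of the identity reads $I = \sum_{j=0}^n \mathbb{P}_j + P(\Delta(\Omega_n), L_{bc}^0)$. On the other side, Theorem~\ref{thasympt0and1H} furnishes the disjoint decomposition $\sigma(L_{bc}) = \sigma_{(m)} \cup \{\widetilde{\lambda}_j^\mp : j \geq m+1\}$, and applying the Dunford calculus to this decomposition gives the companion identity
\[
I = P(\sigma_{(m)}, L_{bc}) + \sum_{j=m+1}^n \widetilde{\mathbb{P}}_j + P(\widetilde{\Delta}(\Omega_n), L_{bc}).
\]
Subtracting the two identities rearranges the expression inside the norm in the corollary into exactly $P(\Delta(\Omega_n), L_{bc}^0) - P(\widetilde{\Delta}(\Omega_n), L_{bc})$, and taking Hilbert--Schmidt norms establishes the first inequality. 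The Dirichlet case is handled identically: take $\Omega_n = \{n+1, n+2, \ldots\} \subset \mathbb{N}$, use the orthonormal basis $(e_{j, dir})_{j \in \mathbb{N}}$ of $L_2[0,\omega]$ to write the analogous resolution of the identity for $L_{dir}^0$, invoke the decomposition of $\sigma(L_{dir})$ from Theorem~\ref{thasympt(0,1)H}, and apply the corresponding estimate in Theorem~\ref{thPPH}.

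The main (modest) obstacle is to justify that the tail Riesz projection $P(\widetilde{\Delta}(\Omega_n), L_{bc})$ is a genuine bounded operator and that the series $\sum_{j \geq n+1} \widetilde{\mathbb{P}}_j$ converges in the strong operator topology, so that the algebraic manipulation above is meaningful rather than purely formal. This is, however, already built into Theorem~\ref{thPPH}: the Hilbert--Schmidt closeness of the two projections forces boundedness of each piece, and strong convergence of the perturbed tail is inherited from the convergence of the unperturbed decomposition via the same estimate. Thus Corollary~\ref{cor3H} reduces to a one-line consequence of Theorem~\ref{thPPH} combined with the spectral completeness of $L_{bc}^0$ and $L_{dir}^0$, and I do not expect any further complication beyond this bookkeeping.
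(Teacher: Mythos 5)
Your proposal is correct and follows exactly the route the paper intends: the paper offers no written proof beyond ``Theorem~\ref{thPPH} implies directly,'' and the intended deduction is precisely your choice of the tail set $\Omega_n=\{n+1,n+2,\dots\}$ with $d(\Omega_n)=n+1$ combined with the two resolutions of the identity (for $L_{bc}^0$ via the orthonormal eigenbasis, and for $L_{bc}$ via conjugating $\mathbb{P}_{(m)}+\sum_{j\ge m+1}\mathbb{P}_j=I$ by the similarity transformation). Your bookkeeping, including the remark on strong convergence of the perturbed tail, fills in the details consistently with the paper's framework.
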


Finally, we state the result on asymptotic behaviour of an analytic semigroup of operators related to the operator $-L_{bc}$.
\begin{theorem}\label{thsemigroup1H}
Let $m$ be as in Theorem~\ref{thasympt0and1H} or in Theorem~\ref{thasympt(0,1)H}.
The operator $-L_{bc}$, $bc\in\{per, ap, dir\}$, is sectorial and it generates an analytic semigroup of operators
$T: \mathbb{R}_+\to \mathrm{End}\,L_2[0, \omega]$. This semigroup is similar to a semigroup $\widetilde{T}:
\mathbb{R}_+\to\mathrm{End}\,L_2[0, \omega]$ of the following form
\[
\widetilde{T}(t)=\widetilde{T}_{(m)}(t)\oplus \widetilde{T}^{(m)}(t), \quad t\in\mathbb{R}_+,
\]
acting in $L_2[0, \omega]=\mathcal{H}_{(m)}\oplus\mathcal{H}^{(m)}$. Here $\mathcal{H}_{(m)}=\mathrm{Im}\,\mathbb{P}_{(m)}$,
$\mathcal{H}^{(m)}=\mathrm{Im}\,(I-\mathbb{P}_{(m)})$ for $bc\in\{per, ap\}$ and $\mathcal{H}_{(m)}=\mathrm{Im}\,P_{(m)}$,
$\mathcal{H}^{(m)}=\mathrm{Im}\,(I-P_{(m)})$ for $bc=dir$.

For $bc=dir$ the semigroup $\widetilde{T}^{(m)}:\mathbb{R}_+\to\mathrm{End}\,\mathcal{H}^{(m)}$ is represented as
\[
\widetilde{T}^{(m)}(t)x=\sum_{s\geq m+1}e^{-\widetilde{\lambda}_{s, dir}t}P_{s, dir}x, \quad x\in L_2[0, \omega],
\]
where the eigenvalues $\widetilde{\lambda}_{s, dir}$, $s\geq m+1$, are defined in (\ref{lambdantheta01H}).
\end{theorem}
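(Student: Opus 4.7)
The plan is to first establish the sectoriality of $-L_{bc}$ so that the analytic semigroup $T$ exists, then to push the block-diagonal structure produced by the similarity transformation of Section~\ref{sec2} to the semigroup level, and finally to identify $\widetilde{T}^{(m)}$ by a spectral series in the Dirichlet case, where each unperturbed eigenspace is one-dimensional.

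\textbf{Sectoriality.} Since $L_{bc}^0$ is a positive self-adjoint operator with compact resolvent, $-L_{bc}^0$ is sectorial of angle $\pi/2$ and generates a bounded analytic semigroup. For $k>1$ the Sobolev embedding $H^{2k}[0,\omega]\hookrightarrow C[0,\omega]$ combined with a standard interpolation inequality gives, for every $\varepsilon>0$, a constant $c_\varepsilon$ with
\[
\|Qy\|_2\leq \|q\|_2\|y\|_\infty \leq \varepsilon\|L_{bc}^0 y\|_2 + c_\varepsilon\|y\|_2,\qquad y\in D(L_{bc}^0),
\]
so the multiplication operator $Q$ is $L_{bc}^0$-bounded with relative bound zero. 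The Kato-type perturbation theorem for sectorial generators then yields sectoriality of $-L_{bc}=-L_{bc}^0+Q$ and hence the existence of the analytic semigroup $T:\mathbb{R}_+\to\mathrm{End}\,L_2[0,\omega]$.

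\textbf{Block decomposition.} The similarity transformation constructed for the proofs of Theorems~\ref{thasympt0and1H} and~\ref{thasympt(0,1)H} furnishes a bounded invertible operator $U$ on $L_2[0,\omega]$ with $U^{-1}L_{bc}U = L_{bc}^0 - X$, where $X$ commutes with $\mathbb{P}_{(m)}$ (respectively $P_{(m)}$) and with each of the remaining $L_{bc}^0$-Riesz projections $\mathbb{P}_n$, $n\geq m+1$ (respectively $P_{n,dir}$). Consequently the subspaces $\mathcal{H}_{(m)}$ and $\mathcal{H}^{(m)}$ reduce the transformed generator, so the similar semigroup $\widetilde{T}(t):=U^{-1}T(t)U$ is itself analytic and splits as the direct sum $\widetilde{T}_{(m)}(t)\oplus\widetilde{T}^{(m)}(t)$, which is the first assertion.

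\textbf{Explicit form on the tail for $bc=dir$.} When $bc=dir$ each $P_{s,dir}$, $s\geq m+1$, is one-dimensional, so the commutation of $X$ with $P_{s,dir}$ forces $L_{dir}^0-X$ to act on $\mathrm{Im}\,P_{s,dir}$ as multiplication by a single scalar; by the intertwining identity $\widetilde{P}_{s,dir}=UP_{s,dir}U^{-1}$ and Theorem~\ref{thasympt(0,1)H} this scalar must be $\widetilde{\lambda}_{s,dir}$. The asymptotic $\mathrm{Re}\,\widetilde{\lambda}_{s,dir}\sim(\pi s/\omega)^{2k}$ ensures that the series $\sum_{s\geq m+1}e^{-\widetilde{\lambda}_{s,dir}t}P_{s,dir}x$ converges strongly for every $t>0$ and defines the unique analytic semigroup on $\mathcal{H}^{(m)}$ whose generator is $-(L_{dir}^0-X)|_{\mathcal{H}^{(m)}}$; hence it coincides with $\widetilde{T}^{(m)}(t)x$. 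The main obstacle is the sectoriality step, since the potential is only in $L_2$: the relative-bound-zero estimate above is the key ingredient that makes standard perturbation theory applicable. Once this is secured, the rest of the theorem is a direct transcription of the spectral picture already obtained in Theorems~\ref{thasympt0and1H} and~\ref{thasympt(0,1)H} through the similarity $U$.
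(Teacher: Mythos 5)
Your proof is correct, and the block-decomposition and tail-identification steps coincide with the paper's argument: both push the similarity $U=(I+\Gamma_{l,bc}Q)(I+\Gamma_{m,bc}X_*)$ to the semigroup level, use that the transformed generator commutes with $\mathbb{P}_{(m)}$ (resp.\ $P_{(m)}$) and with each tail projection, and in the Dirichlet case read off the scalar action on the one-dimensional ranges $\mathrm{Im}\,P_{s,dir}$ from Theorem~\ref{thasympt(0,1)H}. Where you genuinely diverge is the sectoriality step. The paper never perturbs $-L_{bc}^0$ by $Q$ directly: it first invokes Theorem~\ref{thsimilar4H} to replace $-L_{bc}$ by $-\widetilde{L}_{bc}^0+J_{m,bc}X_*$, a \emph{bounded} perturbation of the sectorial normal operator $-\widetilde{L}_{bc}^0$, and then uses that sectoriality is preserved under bounded perturbation and under similarity. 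You instead prove that the multiplication operator $Q$ is $L_{bc}^0$-bounded with relative bound zero, via $\|qy\|_2\leq\|q\|_2\|y\|_\infty$ and the interpolation inequality $\|y\|_\infty\leq\varepsilon\|y^{(2k)}\|_2+c_\varepsilon\|y\|_2$, and apply the standard perturbation theorem for sectorial operators. Both routes are valid; yours has the advantage of being independent of the similar-operators machinery (so sectoriality and existence of $T$ are established before any of Sections~\ref{sec2}--\ref{sec3} are used), while the paper's route costs nothing extra since the similarity is needed anyway for the splitting, and it avoids having to justify the relative-bound estimate. The only omission on your side is the explicit formula for $\widetilde{T}^{(m)}$ when $bc\in\{per,ap\}$ (the paper derives it from the $2\times2$ matrix exponential of Lemma~\ref{lhsemigroup}), but that formula is announced outside the theorem statement, so your proof covers everything the theorem actually asserts.
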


In Section~\ref{sec4} it will be also given a representation of the semigroup
$\widetilde{T}^{(m)}: \mathbb{R}_+\to\mathrm{End}\,\mathcal{H}^{(m)}$ for $bc\in\{per, ap\}$.

Note that some of the results stated above have been partially announced in~\cite{PolyakovDE}.

\section{Spectral analysis of an abstract operators in Hilbert space}\label{sec2}

In this Section we apply the method of similar operators to abstract linear operators acting in a complex Hilbert space
$\mathcal{H}$ and having the same spectral properties as $L_{bc}$, $bc\in\{per, ap, dir\}$.
Recall some basic notions and results of this method in the form relevant to our purposes
(see \cite[Section~2]{<2011>} and \cite[Section~2]{Bask_Pol_2017}).

Let $\mathrm{End}\,\mathcal{H}$ be the Banach algebra of bounded linear operators acting in $\mathcal{H}$.
\begin{definition}
Two linear operators $A_i: D(A_i)\subset\mathcal{H}\to\mathcal{H}$, $i=1, 2$, are called \emph{similar} if
there exists a continuously invertible operator $U\in\mathrm{End}\,\mathcal{H}$ such that $UD(A_2)=D(A_1)$ and $A_1Ux=UA_2x$,
$x\in D(A_2)$. Such an operator $U$ is called a \emph{similarity transformation} from $A_1$ to $A_2$.
\end{definition}

Let $A: D(A)\subset\mathcal{H}\to\mathcal{H}$ be a linear closed operator and $\mathfrak{L}_A(\mathcal{H})$ denote the Banach
space of those operators on $\mathcal{H}$ that are subordinate to the operator $A$. By definition, a linear operator
$B: D(B)\subset\mathcal{H}\to\mathcal{H}$ belongs to $\mathfrak{L}_A(\mathcal{H})$ if $D(A)\subseteq D(B)$ and
$\|B\|_A=\inf\{C>0: \|Bx\|\leq C(\|x\|+\|Ax\|), x\in D(A)\}$ is finite. Note that $\|\cdot\|_A$ is the norm in
$\mathfrak{L}_A(\mathcal{H})$.

\begin{definition}[\cite{<2011>}]\label{def2}
Let $\mathfrak{U}$ be a linear subspace of $\mathfrak{L}_A(\mathcal{H})$ and
$J: \mathfrak{U}\to\mathfrak{U}$ and $\Gamma: \mathfrak{U}\to \mathrm{End}\,\mathcal{H}$ linear transformers
(i.~e., linear operators in the space of linear operators). The triple $(\mathfrak{U}, J, \Gamma)$ is
called \emph{admissible} for the operator $A$ and $\mathfrak{U}$ is called the \emph{space of admissible perturbations}
if the following conditions hold:

1) $\mathfrak{U}$ is a Banach space (with respect to the norm $\|\cdot\|_*$) continuously embedded into
$\mathfrak{L}_A(\mathcal{H})$;

2) $J$ and $\Gamma$ are continuous transformers and $J$ is a projection;

3) $(\Gamma X)D(A)\subset D(A)$ and $A\Gamma Xx-\Gamma XAx = (X-JX)x$ for all $X\in\mathfrak{U}$ and $x\in D(A)$;

4) $X\Gamma Y$, $\Gamma XY\in\mathfrak{U}$ for every $X, Y\in\mathfrak{U}$ and there exists a constant $\gamma>0$ such that
\[
\|\Gamma\|\leq\gamma, \quad \max\{\|X\Gamma Y\|_\ast, \|\Gamma XY\|_\ast\}\leq \gamma\|X\|_\ast\|Y\|_\ast;
\]

5) for any $X\in\mathfrak{U}$ and $\varepsilon>0$ there exists $\lambda_\varepsilon\in\rho(A)$ such that
$\|X(A-\lambda_\varepsilon I)^{-1}\|<\varepsilon$, where $\rho(A)$ is the resolvent set of the operator $A$.
\end{definition}
\begin{theorem}[\cite{<2011>}]\label{thsimilar}
Let $(\mathfrak{U}, J, \Gamma)$ be an admissible triple for the operator $A: D(A)\subset\mathcal{H}\to\mathcal{H}$ and
operator $B$ belong to $\mathfrak{U}$. If
\[
\|J\|\|B\|_\ast\|\Gamma\|<1/4,
\]
then the operator $A-B$ is similar to $A-JX_*$, where $X_*\in\mathfrak{U}$ is a solution of the nonlinear equation
\[
X=B\Gamma X-(\Gamma X)(JB)-(\Gamma X)J(B\Gamma X)+B=:\Phi(X).
\]
This solution can be found by the method of simple iterations by setting $X_0=0$, $X_1=B$ and $X_{n+1}=\Phi(X_n)$,
$n\geq 2$. Moreover, the operator $\Phi: \mathfrak{U}\to\mathfrak{U}$ is a contraction in the ball $\{X\in\mathfrak{U}:
\|X-B\|_*\leq 3\|B\|_*\}$ and the invertible operator
$I+\Gamma X_*\in\mathrm{End}\,\mathcal{H}$ is a similarity transformation from $A-B$ to $A-JX_*$.
\end{theorem}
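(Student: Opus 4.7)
The plan is to proceed in two stages: a fixed-point stage that produces the operator $X_*$ solving $\Phi(X) = X$, followed by an algebraic stage that verifies $U = I + \Gamma X_*$ implements the desired similarity. All three conclusions of the theorem (existence of $X_*$ by iteration, the contraction property of $\Phi$, and the similarity $A - B \sim A - JX_*$) fall out of this pair of stages.

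For the first stage, I would apply the Banach contraction principle to $\Phi$ on the closed ball $\mathcal{B} = \{X \in \mathfrak{U} : \|X - B\|_* \leq 3\|B\|_*\}$. Since $X \in \mathcal{B}$ gives $\|X\|_* \leq 4\|B\|_*$, each of the three perturbation summands in $\Phi(X) - B = B\Gamma X - (\Gamma X)(JB) - (\Gamma X)J(B\Gamma X)$ can be bounded via the bilinear estimates of condition~4) in Definition~\ref{def2}. The smallness hypothesis $\|J\|\|B\|_*\|\Gamma\| < 1/4$ then simultaneously yields the self-map property $\Phi(\mathcal{B}) \subset \mathcal{B}$ and a Lipschitz constant strictly below one, the latter obtained by expanding $\Phi(X) - \Phi(Y)$ as a sum of terms linear in $X - Y$. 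Iterating $X_0 = 0$, $X_{n+1} = \Phi(X_n)$ gives a Cauchy sequence in $\mathfrak{U}$ with unique limit $X_* \in \mathcal{B}$.

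For the second stage, the bound $\|\Gamma X_*\| \leq \gamma\|X_*\|_* < 1$ together with a Neumann series ensures that $U = I + \Gamma X_*$ is a continuous isomorphism of $\mathcal{H}$; moreover $U$ preserves $D(A)$, since $(\Gamma X)D(A) \subset D(A)$ by condition~3). To check $(A - B)Ux = U(A - JX_*)x$ for $x \in D(A)$, I would expand both sides, cancel the common $Ax$ term, and invoke the derivation-like identity $A\Gamma X_* x - \Gamma X_* Ax = (X_* - JX_*)x$ from condition~3). After cancellations, the required identity reduces to $X_* = B + B\Gamma X_* - (\Gamma X_*)(JX_*)$, and direct comparison with $\Phi(X_*) = X_*$ further reduces this to the equality $JX_* = J(B + B\Gamma X_*)$, which follows by applying $J$ to the fixed-point equation itself together with the vanishing of $J$ on terms of the form $(\Gamma X_*)(\cdot)$ that is structurally built into an admissible triple.

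The hard part will be the last algebraic reduction: showing that the compound nonlinear term $(\Gamma X)J(B\Gamma X)$ in the definition of $\Phi$ is placed exactly so that, after $J$ is applied, the right cancellations occur and $JX_* = J(B + B\Gamma X_*)$ emerges automatically. This is precisely why the equation for $X_*$ takes this asymmetric form rather than the naive $X = B + B\Gamma X$. Once this algebraic identity is secured, the similarity $U(A - JX_*)U^{-1} = A - B$ follows immediately, and the $X_0 = 0$, $X_1 = B$ iteration statement is a direct consequence of the Banach iteration from the first stage.
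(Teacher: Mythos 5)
The paper itself offers no proof of this theorem: it is imported verbatim from \cite{<2011>}, so there is nothing internal to compare against. Your two-stage plan is the standard argument from that source, and the quantitative part is sound: for $X$ in the ball $\|X-B\|_*\le 3\|B\|_*$ one has $\|X\|_*\le 4\|B\|_*$, and the hypothesis $\|J\|\,\|B\|_*\,\|\Gamma\|<1/4$ gives $\|\Phi(X)-B\|_*\le(1+1+1)\|B\|_*$ together with a Lipschitz constant strictly below one, while $\|\Gamma X_*\|\le\gamma\|X_*\|_*<1$ yields invertibility of $I+\Gamma X_*$ by a Neumann series. The reduction, via condition~3), of the intertwining identity $(A-B)(I+\Gamma X_*)=(I+\Gamma X_*)(A-JX_*)$ to the equation $X_*=B+B\Gamma X_*-(\Gamma X_*)(JX_*)$ is also correct. (You do gloss over why $I+\Gamma X_*$ maps $D(A)$ \emph{onto} $D(A)$ and never use condition~5), which is what handles unbounded $B\in\mathfrak{L}_A(\mathcal{H})$; for the Hilbert--Schmidt perturbations actually used in this paper that is harmless.)

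The genuine gap is in the last algebraic step, exactly where you yourself locate the difficulty. To pass from the fixed-point equation to $X_*=B+B\Gamma X_*-(\Gamma X_*)(JX_*)$ you need $JX_*=JB+J(B\Gamma X_*)$, and applying $J$ to $\Phi(X_*)=X_*$ shows this holds if and only if $J\bigl((\Gamma X_*)\,J(B+B\Gamma X_*)\bigr)=0$. You justify this by ``the vanishing of $J$ on terms of the form $(\Gamma X_*)(\cdot)$,'' but in that generality the claim is false: for the concrete transformers, $\mathbb{P}_n(\Gamma X)Z\mathbb{P}_n=\sum_{\lambda_j\ne\lambda_n}\mathbb{P}_nX\mathbb{P}_jZ\mathbb{P}_n/(\lambda_n-\lambda_j)$ need not vanish for arbitrary $Z$. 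What is true, and what you actually need, is the weaker identity $J\bigl((\Gamma X)(JY)\bigr)=0$, which holds because $(JY)\mathbb{P}_n=\mathbb{P}_nY\mathbb{P}_n$ and the diagonal blocks $\mathbb{P}_n(\Gamma X)\mathbb{P}_n$ vanish. Crucially, this identity is \emph{not} derivable from conditions 1)--5) of Definition~\ref{def2} as reproduced in this paper; it is an extra structural property of the transformers (present in the fuller axiom list of \cite{<2011>}) and must either be added as a hypothesis or verified directly for $J_{m,bc}$, $\Gamma_{m,bc}$. State the property in its correct form, prove it, and your argument closes; as written, it rests on an assertion that the given definition does not supply and that is wrong in the strong form you invoke.
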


Now we apply the method of similar operators to an abstract linear operator of the form $A_{bc}-B$, where
$A_{bc}: D(A_{bc})\subset\mathcal{H}\to\mathcal{H}$ is a normal operator (see~\cite[Ch.~1, Sec.~6]{<Kato>}) with
discrete spectrum. Our conditions on $B$ will be described below. The index $bc$ here stands to denote our assumption that
the spectrum $\sigma(A_{bc})$ has the following form $\lambda_n=\pi^{2k}(2n+\theta)^{2k}/\omega^{2k}$, $n\in\mathbb{Z}_+$,
$k>1$, for $bc\in\{per, ap\}$ and $\lambda_{n, dir}=\pi^{2k}n^{2k}/\omega^{2k}$, $n\in\mathbb{N}$, $k>1$, for $bc=dir$.
In fact, we assume that $A_{bc}$ and $L_{bc}^0$ have the same spectral properties.
Keeping this in mind, we will use the same notation for the eigenvalues and eigenfunctions of these two operators as well
as for the corresponding projections. In addition, for $bc\in\{per, ap\}$ the operators $A_{bc}$ will be denoted by $A_\theta$,
where $\theta=0$ and $\theta=1$ stands for $bc=per$ and $bc=ap$, respectively.

Let $\mathbb{J}$ denote one of the sets $\mathbb{Z}_+$ or $\mathbb{N}$. We suppose that the operator $B$ belongs to
the ideal of Hilbert-Schmidt operators $\mathfrak{S}_2(\mathcal{H})$
(see~\cite[Ch.~3, Sec.~9]{<Gohberg>}). By $\|X\|_2=\big(\sum_{n, j\in\mathbb{J}}|(Xg_n, g_j)|^2\big)^{1/2}$,
$X\in\mathfrak{S}_2(\mathcal{H})$, it is denoted the norm in $\mathfrak{S}_2(\mathcal{H})$. Here $g_n$, $n\in\mathbb{J}$,
are the eigenfunctions of the operator $A_{bc}$.

Let $\mathbb{P}_n$, $n\in\mathbb{Z}_+$, and $P_{n, dir}$, $n\in\mathbb{N}$, be the orthogonal Riesz projections constructed
for the sets $\{\lambda_n\}$ and $\{\lambda_{n, dir}\}$, respectively.
Therefore, $A_\theta\mathbb{P}_n=\lambda_n\mathbb{P}_n$ and $A_{dir}P_{n, dir}=\lambda_{n, dir}P_{n, dir}$.
For every $x\in\mathcal{H}$ these projections are defined by formulas (\ref{pndef}) and (\ref{pndirdef}).

We take the ideal of Hilbert-Schmidt operators $\mathfrak{S}_2(\mathcal{H})$ in the role of the space of admissible perturbations
$\mathfrak{U}$ and construct the transformers $J_{bc}, \Gamma_{bc}\in\mathrm{End}\,\mathfrak{S}_2(\mathcal{H})$
in the following way. If $bc\in\{per, ap\}$, then they are defined by
\[
J_{bc}X=\sum_{n=0}^\infty\mathbb{P}_nX\mathbb{P}_n \ \ \text{ and } \ \ \Gamma_{bc}X=\sum\limits_{\substack{s, j=0\\
\lambda_s\ne \lambda_j}}^\infty\frac{\mathbb{P}_sX\mathbb{P}_j}{\lambda_s - \lambda_j}, \quad X\in\mathfrak{S}_2(\mathcal{H}).
\]
If $bc=dir$, then they are determined by
\[
J_{dir}X=\sum_{n=1}^\infty P_{n, dir}XP_{n, dir} \ \ \text{ and } \ \ \Gamma_{dir}X=\sum\limits_{\substack{s, j=1 \\
s\ne j}}^\infty\frac{P_{s, dir}XP_{j, dir}}{\lambda_{s, dir} - \lambda_{j, dir}}, \quad X\in\mathfrak{S}_2(\mathcal{H}).
\]
Taking into account the definition of  $\mathfrak{S}_2(\mathcal{H})$ and using that
$\min_{\lambda_i\ne \lambda_j}|\lambda_i-\lambda_j|$ and $\min_{i\ne j}|\lambda_{i, dir}-\lambda_{j, dir}|$ are positive,
we conclude easily that the transformer $\Gamma_{bc}$, $bc\in\{per, ap, dir\}$, is well-defined and bounded on
$\mathfrak{S}_2(\mathcal{H})$.

Also, for $bc\in\{per, ap\}$ we consider the sequences of transformers $J_{m, bc}$ and $\Gamma_{m, bc}$, $m\in\mathbb{Z}_+$,
defined by
\begin{equation}\label{jmbc}
J_{m, bc}X=J_{bc}(X-\mathbb{P}_{(m)}X\mathbb{P}_{(m)}) + \mathbb{P}_{(m)}X\mathbb{P}_{(m)}, \quad
X\in\mathfrak{S}_2(\mathcal{H}),
\end{equation}
\begin{equation}\label{gmbc}
\Gamma_{m, bc}X=\Gamma_{bc}(X-\mathbb{P}_{(m)}X\mathbb{P}_{(m)}), \quad X\in\mathfrak{S}_2(\mathcal{H}),
\end{equation}
where $\mathbb{P}_{(m)}=\sum_{j=0}^m\mathbb{P}_j$. Note that $J_{bc}=J_{m, bc}$ and
$\Gamma_{bc}=\Gamma_{m, bc}$ for $m=0$.

For $bc=dir$ and $m\in\mathbb{N}$, define these transformers by
\begin{equation}\label{jmdir}
J_{m, dir}X=J_{dir}(X-P_{(m)}XP_{(m)}) + P_{(m)}XP_{(m)}, \quad X\in\mathfrak{S}_2(\mathcal{H}),
\end{equation}
\begin{equation}\label{gmdir}
\Gamma_{m, dir}X=\Gamma_{dir}(X-P_{(m)}XP_{(m)}), \quad X\in\mathfrak{S}_2(\mathcal{H}),
\end{equation}
where $P_{(m)}=\sum_{j=1}^mP_{j, dir}$.
\begin{lemma}\label{lhcorrect-1}
The transformers $J_{m, bc}$ and $\Gamma_{m, bc}$, $m\in\mathbb{J}$, are self-adjoint operators on
$\mathfrak{S}_2(\mathcal{H})$. Each transformer $J_{m, bc}$ is an orthogonal projection.
Moreover, $\|J_m\|_2=1$ and the following estimates hold:
\begin{equation}\label{5-gamma}
\|\Gamma_{m, bc}\|_2\leq \frac{\omega^{2k}}{\pi^{2k}(2m+1)}\cdot
\begin{cases}
1/4(2m+\theta)^{2k-2}, \, m\in\mathbb{Z}_+, \, bc\in\{per, ap\}, \\
1/m^{2k-2}, \quad m\in\mathbb{N}, \quad bc=dir.
\end{cases}
\end{equation}
\end{lemma}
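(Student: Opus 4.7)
The strategy is to work in the Hilbert--Schmidt space $\mathfrak{S}_2(\mathcal H)$ with the inner product $\langle X,Y\rangle_2=\mathrm{tr}(Y^{*}X)$, exploiting the fact that the spectral projections are mutually orthogonal and self-adjoint and sum (strongly) to the identity. This yields the orthogonal block decomposition $X=\sum_{s,j}\mathbb{P}_s X\mathbb{P}_j$ with Parseval identity $\|X\|_2^{2}=\sum_{s,j}\|\mathbb{P}_s X\mathbb{P}_j\|_2^{2}$, together with the symmetry
\[
\langle\mathbb{P}_s X\mathbb{P}_j,Y\rangle_2=\langle X,\mathbb{P}_s Y\mathbb{P}_j\rangle_2,
\]
which follows from trace cyclicity combined with $\mathbb{P}_n^{*}=\mathbb{P}_n$. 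All three assertions will be proved ``block by block''. I focus on the case $bc\in\{per,ap\}$; the Dirichlet case is entirely parallel with the rank-one projections $P_{n,\mathrm{dir}}$.

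First I would verify that $J_{m,bc}$ is a self-adjoint orthogonal projection on $\mathfrak{S}_2(\mathcal H)$. Self-adjointness follows termwise from the symmetry displayed above. Idempotence reduces, via \eqref{jmbc}, to $\mathbb{P}_n\mathbb{P}_\ell=\delta_{n\ell}\mathbb{P}_n$ and $\mathbb{P}_{(m)}\mathbb{P}_n=0$ for $n\geq m+1$, which force the two summands defining $J_{m,bc}$ to have orthogonal ranges. Since $J_{m,bc}$ is a nonzero self-adjoint idempotent, its operator norm on $\mathfrak{S}_2(\mathcal H)$ equals $1$. For $\Gamma_{m,bc}$ self-adjointness is obtained by first establishing that $\Gamma_{bc}$ itself is self-adjoint (the only non-formal input is that $\lambda_s\in\mathbb{R}$, so the coefficient $(\lambda_s-\lambda_j)^{-1}$ survives complex conjugation) and then noting that $\Gamma_{bc}$ commutes with the self-adjoint compression $C_m Z:=\mathbb{P}_{(m)}Z\mathbb{P}_{(m)}$; this commutation is a direct consequence of $\mathbb{P}_{(m)}\mathbb{P}_s=\mathbb{P}_s$ for $s\leq m$ and zero otherwise. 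Hence $\Gamma_{m,bc}=\Gamma_{bc}\circ(I-C_m)$ is self-adjoint as well.

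For the norm estimate \eqref{5-gamma}, Pythagoras in the block decomposition gives
\[
\|\Gamma_{m,bc}X\|_2^{2}=\sum_{(s,j)\in S_m}\frac{\|\mathbb{P}_s X\mathbb{P}_j\|_2^{2}}{(\lambda_s-\lambda_j)^{2}}\leq\Bigl(\min_{(s,j)\in S_m}|\lambda_s-\lambda_j|\Bigr)^{-2}\|X\|_2^{2},
\]
with $S_m=\{(s,j):s\neq j,\ \max(s,j)\geq m+1\}$. Since $n\mapsto\lambda_n$ is strictly increasing, this minimum is attained at the boundary pair $(m,m+1)$, so the estimate reduces to a lower bound for $(2m+2+\theta)^{2k}-(2m+\theta)^{2k}$ in the periodic and semiperiodic cases (and $(m+1)^{2k}-m^{2k}$ in the Dirichlet case). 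The main obstacle is that the naive mean value bound $(a+2)^{2k}-a^{2k}\geq 4k\,a^{2k-1}$ does not match the shape $4(2m+1)(2m+\theta)^{2k-2}$ prescribed in \eqref{5-gamma}; keeping the first two terms of the binomial expansion yields $(a+2)^{2k}-a^{2k}\geq 4k\,a^{2k-1}+4k(2k-1)\,a^{2k-2}$, after which the required inequality reduces to the elementary estimate $(k-1)(2m+2k+1)\geq 0$, valid since $k\geq 2$. The Dirichlet analogue is handled identically, reducing to $2m(k-1)\geq 1$ for $m\geq 1$.
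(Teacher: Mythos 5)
Your proof is correct and follows the standard route: the paper itself omits the argument, deferring to \cite[Lemma~2]{Bask_Pol_2017}, and the proof there is exactly this block-orthogonality argument (Parseval over the blocks $\mathbb{P}_sX\mathbb{P}_j$, reality of the eigenvalues for self-adjointness, and a lower bound on the minimal spectral gap $\lambda_{m+1}-\lambda_m$ via the binomial expansion). Your gap estimate reproduces the constant in \eqref{5-gamma} correctly, so nothing further is needed.
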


The proof is similar to the proof of \cite[Lemma~2]{Bask_Pol_2017}.
\begin{lemma}\label{lhtriple}
For any $m\in\mathbb{J}$, $(\mathfrak{S}_2(\mathcal{H}), J_{m, bc}, \Gamma_{m, bc})$ is an admissible triple
for the operator $A_{bc}$, $bc\in\{per, ap, dir\}$. The constant $\gamma=\gamma(m)$ from
Definition~\ref{def2} has the following estimate:
\[
\|\Gamma_{m, bc}\|_2\leq \gamma(m)=\frac{\omega^{2k}}{\pi^{2k}(2m+1)}\cdot
\begin{cases}
1/4(2m+\theta)^{2k-2}, \quad &bc\in\{per, ap\}, \\
1/m^{2k-2}, \quad & bc=dir.
\end{cases}
\]
\end{lemma}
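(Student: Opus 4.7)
The plan is to verify the five conditions of Definition \ref{def2} for $\mathfrak{U}=\mathfrak{S}_2(\mathcal{H})$, $J=J_{m,bc}$, $\Gamma=\Gamma_{m,bc}$, $A=A_{bc}$, and then read off $\gamma(m)$ directly from Lemma \ref{lhcorrect-1}. Conditions (1) and (2) are essentially free: $\mathfrak{S}_2(\mathcal{H})$ is a Banach space under $\|\cdot\|_2$, and every $B\in\mathfrak{S}_2(\mathcal{H})$ is bounded, so $\|B\|_{A_{bc}}\le\|B\|\le\|B\|_2$, giving continuous embedding into $\mathfrak{L}_{A_{bc}}(\mathcal{H})$; continuity of $J_{m,bc}$ and $\Gamma_{m,bc}$, together with the projection property of $J_{m,bc}$, are exactly the content of Lemma \ref{lhcorrect-1}. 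For condition (4) I would use the ideal structure of $\mathfrak{S}_2(\mathcal{H})$: the estimate $\|XY\|_2\le\|X\|\,\|Y\|_2\le\|X\|_2\|Y\|_2$ combined with $\|\Gamma_{m,bc}\|_2\le\gamma(m)$ from Lemma \ref{lhcorrect-1} immediately yields
\[
\max\{\|X\Gamma_{m,bc}Y\|_2,\|\Gamma_{m,bc}XY\|_2\}\le\gamma(m)\|X\|_2\|Y\|_2,
\]
so the $\gamma(m)$ appearing in the statement is precisely the bound for $\|\Gamma_{m,bc}\|_2$.

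The substantive step is condition (3), the commutation identity. I would expand an arbitrary $X\in\mathfrak{S}_2(\mathcal{H})$ in the eigenbasis of the normal operator $A_{bc}$ as $X=\sum_{s,j}\mathbb{P}_sX\mathbb{P}_j$ (or the analogous sum with $P_{s,dir}$ for $bc=dir$). On each off-diagonal block with indices outside the finite block $\{0,\dots,m\}$ (resp.\ $\{1,\dots,m\}$), use $A_{bc}\mathbb{P}_s=\lambda_s\mathbb{P}_s$ and $\mathbb{P}_jA_{bc}|_{D(A_{bc})}=\lambda_j\mathbb{P}_j$ to get
\[
A_{bc}\frac{\mathbb{P}_sX\mathbb{P}_j}{\lambda_s-\lambda_j}x-\frac{\mathbb{P}_sX\mathbb{P}_j}{\lambda_s-\lambda_j}A_{bc}x=\mathbb{P}_sX\mathbb{P}_jx,\qquad x\in D(A_{bc}),
\]
for $\lambda_s\ne\lambda_j$; summing, the diagonal blocks together with the $\mathbb{P}_{(m)}X\mathbb{P}_{(m)}$ part are exactly the pieces collected by $J_{m,bc}$, yielding $A_{bc}\Gamma_{m,bc}Xx-\Gamma_{m,bc}XA_{bc}x=(X-J_{m,bc}X)x$. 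The inclusion $(\Gamma_{m,bc}X)D(A_{bc})\subset D(A_{bc})$ reduces to showing that the series $\sum_{s,j}\lambda_s^2\|\mathbb{P}_sX\mathbb{P}_jx\|^2/|\lambda_s-\lambda_j|^2$ converges for $x\in D(A_{bc})$; this is where $\lambda_s\sim s^{2k}$ and $|\lambda_s-\lambda_j|\gtrsim|s^{2k-1}+j^{2k-1}|\cdot|s-j|$ are crucial so that $\lambda_s/|\lambda_s-\lambda_j|$ stays uniformly bounded off the diagonal, reducing convergence to the Hilbert--Schmidt condition on $X$ and $x\in D(A_{bc})$. This quantitative comparison is the main obstacle, since the analogous second-order argument is simpler; the $k>1$ case still goes through because $|s^{2k}-j^{2k}|/s^{2k}$ is bounded below by an absolute constant whenever $s\ne j$ and at least one of $s,j$ is outside the finite block.

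Condition (5) is handled via discreteness: since $A_{bc}$ is normal with eigenvalues $\lambda_n\to\infty$, I would pick $\mu_N=\frac{1}{2}(\lambda_N+\lambda_{N+1})\in\rho(A_{bc})$; then by the spectral theorem $\|(A_{bc}-\mu_NI)^{-1}\|=1/\mathrm{dist}(\mu_N,\sigma(A_{bc}))\to 0$ since the gaps grow like $N^{2k-1}$. Combining with boundedness of $X\in\mathfrak{S}_2(\mathcal{H})$ gives $\|X(A_{bc}-\mu_NI)^{-1}\|\le\|X\|_2\cdot\mathrm{dist}(\mu_N,\sigma(A_{bc}))^{-1}\to 0$, so condition (5) holds. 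Gathering the four verified conditions and the constant $\gamma(m)$ from Lemma \ref{lhcorrect-1} completes the proof.
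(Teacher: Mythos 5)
The paper itself gives no proof of this lemma (it defers to \cite[Lemma~4]{Bask_Pol_2017}), so your plan of checking the five conditions of Definition~\ref{def2} directly is the right one. Conditions (1), (2), (4) and (5) are handled correctly: the embedding via $\|B\|_{A_{bc}}\le\|B\|\le\|B\|_2$, the projection and continuity properties from Lemma~\ref{lhcorrect-1}, the ideal inequality $\|XY\|_2\le\|X\|\,\|Y\|_2$ together with $\|\Gamma_{m,bc}\|_2\le\gamma(m)$, and the choice $\mu_N=\tfrac12(\lambda_N+\lambda_{N+1})$ with spectral gaps growing like $N^{2k-1}$.

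The gap is in your verification of condition (3). The quantitative claim you lean on --- that $\lambda_s/|\lambda_s-\lambda_j|$ stays uniformly bounded off the diagonal, equivalently that $|s^{2k}-j^{2k}|/s^{2k}$ is bounded below by an absolute constant whenever $s\ne j$ --- is false: for $j=s-1$ one has $1-(1-1/s)^{2k}\sim 2k/s\to 0$, so $\lambda_s/|\lambda_s-\lambda_{s-1}|\sim s/(2k)\to\infty$, and the series $\sum_{s,j}\lambda_s^2\|\mathbb{P}_sX\mathbb{P}_jx\|^2/|\lambda_s-\lambda_j|^2$ cannot be controlled by the Hilbert--Schmidt norm of $X$ in the way you describe. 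The standard repair is not to estimate $A_{bc}(\Gamma_{m,bc}X)x$ termwise but to use the identity $\frac{\lambda_s}{\lambda_s-\lambda_j}=1+\frac{\lambda_j}{\lambda_s-\lambda_j}$ on each block: writing $Y=X-\mathbb{P}_{(m)}X\mathbb{P}_{(m)}$ and using $\mathbb{P}_jA_{bc}x=\lambda_j\mathbb{P}_jx$ for $x\in D(A_{bc})$, one gets
\[
\sum_{\lambda_s\ne\lambda_j}\frac{\lambda_s}{\lambda_s-\lambda_j}\,\mathbb{P}_sY\mathbb{P}_jx
=(X-J_{m,bc}X)x+(\Gamma_{m,bc}X)(A_{bc}x),
\]
where the right-hand side converges because $X-J_{m,bc}X$ and $\Gamma_{m,bc}X$ are bounded and $A_{bc}x\in\mathcal{H}$; closedness of $A_{bc}$ applied to the partial sums then yields both $(\Gamma_{m,bc}X)x\in D(A_{bc})$ and the commutation identity simultaneously, with no lower bound on $|\lambda_s-\lambda_j|/\lambda_s$ needed. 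With that substitution your argument is complete and consistent with the one the paper cites.
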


The proof is similar to the proof of \cite[Lemma~4]{Bask_Pol_2017}.

Now we are ready to state the main theorem on similarity.
\begin{theorem}\label{thsimilar-2}
Let $B$ belong to $\mathfrak{S}_2(\mathcal{H})$ and $m\in\mathbb{J}$ satisfy one of the following conditions:
\begin{equation}\label{9-estimate}
\frac{\omega^{2k}\|B\|_2}{\pi^{2k}(2m+1)(2m+\theta)^{2k-2}}<1 \quad \text{for} \quad \theta\in\{0, 1\}
\end{equation}
or
\begin{equation}\label{10-estimate}
\frac{\omega^{2k}\|B\|_2}{\pi^{2k}(2m+1)m^{2k-2}}<\frac{1}{4} \quad \text{for}\quad bc=dir.
\end{equation}
Then $A_{bc}-B$ is similar to $A_{bc}-J_{m, bc}X_*$, where
$X_*\in\mathfrak{S}_2(\mathcal{H})$ is a solution of the nonlinear equation
\begin{equation}\label{eq52}
X = B\Gamma_{m, bc}X - (\Gamma_{m, bc}X)(J_{m, bc}B) - (\Gamma_{m, bc}X)J_{m, bc}(B\Gamma_{m, bc}X)+B=:\Phi(X).
\end{equation}
This solution can be found by the method of simple iterations by setting $X_0=0$, $X_1=B, \dots$. Moreover, the operator $\Phi:
\mathfrak{S}_2(\mathcal{H})\to\mathfrak{S}_2(\mathcal{H})$ is a contraction in the ball
$\{X\in\mathfrak{S}_2(\mathcal{H}): \|X-B\|_2\leq 3\|B\|_2\}$, the operator $I+\Gamma_{m, bc}X_*$ is a
similarity transformation from $A_{bc}-B$ to $A_{bc}-J_{m, bc}X_*$, and
\[
A_{bc}-B=(I+\Gamma_{m, bc}X_*)(A_{bc}-J_{m, bc}X_*)(I+\Gamma_{m, bc}X_*)^{-1}.
\]
\end{theorem}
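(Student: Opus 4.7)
The plan is to apply the abstract similarity Theorem~\ref{thsimilar} directly, invoking the admissible triple $(\mathfrak{S}_2(\mathcal{H}), J_{m, bc}, \Gamma_{m, bc})$ already furnished by Lemma~\ref{lhtriple}. All the analytic work has been done in Lemmas~\ref{lhcorrect-1}--\ref{lhtriple}; what remains is essentially bookkeeping, namely to check that the smallness hypotheses (\ref{9-estimate}) and (\ref{10-estimate}) translate precisely into the hypothesis $\|J_{m, bc}\|_2 \cdot \|B\|_2 \cdot \|\Gamma_{m, bc}\|_2 < 1/4$ of Theorem~\ref{thsimilar}.

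First I would recall from Lemma~\ref{lhcorrect-1} that $\|J_{m, bc}\|_2 = 1$, since $J_{m, bc}$ is an orthogonal projection on $\mathfrak{S}_2(\mathcal{H})$, and that $\|\Gamma_{m, bc}\|_2 \leq \gamma(m)$ with the explicit value of $\gamma(m)$ given in Lemma~\ref{lhtriple}. In the periodic/semiperiodic case, plugging $\gamma(m) = \omega^{2k}/(4\pi^{2k}(2m+1)(2m+\theta)^{2k-2})$ into $\|B\|_2\,\gamma(m) < 1/4$ yields exactly condition (\ref{9-estimate}); in the Dirichlet case, plugging $\gamma(m) = \omega^{2k}/(\pi^{2k}(2m+1)m^{2k-2})$ into $\|B\|_2\,\gamma(m) < 1/4$ yields exactly condition (\ref{10-estimate}). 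Either way, the hypothesis of Theorem~\ref{thsimilar} is satisfied.

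Once this verification is in place, Theorem~\ref{thsimilar} produces directly the solution $X_\ast \in \mathfrak{S}_2(\mathcal{H})$ of equation (\ref{eq52}), obtained as the fixed point of the contraction $\Phi$ on the ball $\{X \in \mathfrak{S}_2(\mathcal{H}) : \|X - B\|_2 \leq 3\|B\|_2\}$ via the iteration $X_0 = 0$, $X_1 = B$, $X_{n+1} = \Phi(X_n)$, and simultaneously guarantees that $I + \Gamma_{m, bc}X_\ast \in \mathrm{End}\,\mathcal{H}$ is continuously invertible and realizes the similarity between $A_{bc} - B$ and $A_{bc} - J_{m, bc}X_\ast$. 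The closing conjugation identity is then merely a rewriting of the similarity relation $(A_{bc} - B)U = U(A_{bc} - J_{m, bc}X_\ast)$ with $U = I + \Gamma_{m, bc}X_\ast$. I do not anticipate any real obstacle: the only subtle point is the constant matching between Lemma~\ref{lhcorrect-1} and the hypotheses (\ref{9-estimate})--(\ref{10-estimate}), and the slight asymmetry (factor $1/4$ absorbed into $\gamma(m)$ in the periodic/semiperiodic case, but written explicitly on the right-hand side of (\ref{10-estimate}) in the Dirichlet case) is exactly what the two separate conditions reflect.
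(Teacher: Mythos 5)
Your proposal is correct and follows exactly the paper's own (one-line) proof: the author likewise derives the theorem by combining Lemma~\ref{lhtriple} with the abstract Theorem~\ref{thsimilar}, and your verification that $\|J_{m,bc}\|_2\,\|B\|_2\,\gamma(m)<1/4$ reduces precisely to (\ref{9-estimate}) in the periodic/semiperiodic case and to (\ref{10-estimate}) in the Dirichlet case is the right constant bookkeeping, including the factor $1/4$ already absorbed into $\gamma(m)$ for $bc\in\{per,ap\}$. Nothing further is needed.
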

The proof follows from Lemma~\ref{lhtriple} and Theorem~\ref{thsimilar}.

Throughout the rest of this section, we will use the assumptions and notation of Theorem~\ref{thsimilar-2}.
\begin{theorem}\label{th2}
The operator $A_{bc}-B$ has a discrete spectrum and
\[
\begin{array}{c}
A_{bc}-J_{m, bc}X_*=A_{bc}-\mathbb{P}_{(m)}X_*\mathbb{P}_{(m)} -
\sum\limits_{n\geq m+1}\mathbb{P}_nX_*\mathbb{P}_n, \quad bc\in\{per, ap\}, \\
A_{dir}-J_{m, dir}X_*=A_{dir}-P_{(m)}XP_{(m)}-\sum\limits_{n\geq m+1}P_{n, dir}X_*P_{n, dir}.
\end{array}
\]
Furthermore,
\begin{equation}\label{14-new}
\sigma(A_{bc}-B)=\sigma(A_{(m)})\cup\Big(\bigcup_{n\geq m+1}\sigma(A_n)\Big)=\sigma_{(m)}\cup
\Big(\bigcup_{n\geq m+1}\sigma_n\Big),
\end{equation}
where the sets $\sigma_{(m)}$ and $\sigma_n$, $n\geq m+1$, are mutually disjoint and $A_{(m)}$ and $A_n$ are defined as follows.
For $bc\in\{per, ap\}$, $A_{(m)}$ is the restriction of the operator $A_{bc}-\mathbb{P}_{(m)}X_*\mathbb{P}_{(m)}$ to the
invariant subspaces $\mathcal{H}_{(m)}=\mathrm{Im}\,\mathbb{P}_{(m)}$ and $A_n$ is the restriction of the operator
$A_{bc}-\mathbb{P}_nX_*\mathbb{P}_n$ to the subspace $\mathcal{H}_n=\mathrm{Im}\,\mathbb{P}_n$. If $bc=dir$, then $A_{(m)}$ is
the restriction of the operator $A_{bc}-P_{(m)}X_*P_{(m)}$ on the invariant subspaces $\mathcal{H}_{(m)}=\mathrm{Im}\,P_{(m)}$
and $A_n$ is the restriction of the operator $A_{bc}-P_{n, dir}X_*P_{n, dir}$ on the subspace $\mathcal{H}_n=\mathrm{Im}\,P_{n, dir}$.
\end{theorem}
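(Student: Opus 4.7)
The plan is to reduce everything to the similarity statement already obtained in Theorem~\ref{thsimilar-2} and then exploit the block-diagonal structure of $J_{m,bc}X_*$. Since similar operators share the same spectrum (and the same notion of discreteness), it is enough to analyze $A_{bc}-J_{m,bc}X_*$.

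\medskip

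First, I would unfold the definitions (\ref{jmbc})--(\ref{jmdir}) of $J_{m,bc}$. For $bc\in\{per,ap\}$, using $\mathbb{P}_n\mathbb{P}_{(m)}=\mathbb{P}_{(m)}\mathbb{P}_n=\mathbb{P}_n$ when $n\leq m$ and $=0$ otherwise, one computes
\[
J_{bc}\bigl(\mathbb{P}_{(m)}X_*\mathbb{P}_{(m)}\bigr)=\sum_{n=0}^{m}\mathbb{P}_nX_*\mathbb{P}_n,
\]
so that
\[
J_{m,bc}X_*=\mathbb{P}_{(m)}X_*\mathbb{P}_{(m)}+\sum_{n\geq m+1}\mathbb{P}_nX_*\mathbb{P}_n,
\]
which is the first displayed formula. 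The case $bc=dir$ is identical with $P_{(m)}$, $P_{n,dir}$ in place of $\mathbb{P}_{(m)}$, $\mathbb{P}_n$.

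\medskip

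Next, I would observe that the orthogonal decomposition
\[
\mathcal{H}=\mathcal{H}_{(m)}\oplus\bigoplus_{n\geq m+1}\mathcal{H}_n
\]
reduces both $A_{bc}$ and $J_{m,bc}X_*$. For $A_{bc}$ this is immediate since it is normal with the $\mathbb{P}_j$'s as its spectral projections, so $A_{bc}\mathbb{P}_{(m)}=\mathbb{P}_{(m)}A_{bc}$ and $A_{bc}\mathbb{P}_n=\mathbb{P}_nA_{bc}$ on the respective domains. For $J_{m,bc}X_*$ it follows directly from the formula above: each summand $\mathbb{P}_nX_*\mathbb{P}_n$ maps $\mathcal{H}_n$ into $\mathcal{H}_n$ and annihilates every other block, and likewise for $\mathbb{P}_{(m)}X_*\mathbb{P}_{(m)}$ on $\mathcal{H}_{(m)}$. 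Therefore
\[
A_{bc}-J_{m,bc}X_*=A_{(m)}\;\oplus\;\bigoplus_{n\geq m+1}A_n,
\]
with $A_{(m)}$ and $A_n$ defined exactly as in the theorem. The standard fact that the spectrum of an orthogonal direct sum is the union of the block spectra then yields (\ref{14-new}).

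\medskip

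To get discreteness and the structure of the sets: $\mathcal{H}_{(m)}$ is finite-dimensional (of dimension at most $2m+1$ in cases (a), (b), at most $m$ in case (c)), so $\sigma_{(m)}:=\sigma(A_{(m)})$ is a finite set with the claimed bound on its cardinality. Each $\mathcal{H}_n$ ($n\geq m+1$) is two- or one-dimensional, hence $\sigma_n:=\sigma(A_n)$ consists of at most two points. Because $\|\mathbb{P}_nX_*\mathbb{P}_n\|\leq\|X_*\|_2$ is finite while $\lambda_n\to\infty$, the union $\bigcup_{n\geq m+1}\sigma_n$ accumulates only at infinity, giving discreteness. The mutual disjointness of $\sigma_{(m)}$ and the $\sigma_n$, $n\geq m+1$, is then secured by taking $m$ large enough: the condition (\ref{9-estimate}) or (\ref{10-estimate}) forces the perturbation on each $\mathcal{H}_n$ to be small compared to the gap $|\lambda_{n+1}-\lambda_n|$, so each $\sigma_n$ sits in a small neighborhood of $\lambda_n$ and these neighborhoods are pairwise disjoint and disjoint from $\sigma_{(m)}$.

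\medskip

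The only genuinely non-trivial step is the block-diagonal reduction, and the main thing to verify carefully is that $J_{m,bc}X_*$ really commutes with each $\mathbb{P}_{(m)}$ and $\mathbb{P}_n$ (respectively $P_{(m)}$, $P_{n,dir}$); everything else is bookkeeping. Once this commutation is in hand, the structural claim, the spectral decomposition (\ref{14-new}), and the restrictions defining $A_{(m)}$ and $A_n$ all follow by direct inspection from the similarity statement.
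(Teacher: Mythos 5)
Your proposal is correct and follows essentially the same route as the paper: the paper likewise reads off the block-diagonal form of $J_{m,bc}X_*$, notes that $\mathcal{H}_{(m)}$ and the $\mathcal{H}_n$ are invariant (giving one inclusion in (\ref{14-new})), and defers the reverse inclusion to a citation, which you instead sketch directly via the uniform bound $\|\mathbb{P}_nX_*\mathbb{P}_n\|\leq\|X_*\|_2$ together with $\lambda_n\to\infty$. The only point to state with a little more care is that for an infinite orthogonal sum the spectrum is a priori only contained in the closure of the union of block spectra, so your observation that the $\sigma_n$ accumulate only at infinity (plus the resulting uniform resolvent bounds on the blocks) is not decoration but exactly the ingredient that upgrades the "standard fact" to the claimed equality.
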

\begin{proof}
Obviously, the operator $A_{bc}-J_{m, bc}X_*$ has a discrete spectrum. Therefore, the operator $A_{bc}-B$ also has a
discrete spectrum. Hence, the spectrums of these operators coincide. Next, the subspaces $\mathcal{H}_{(m)}$ and $\mathcal{H}_n$
are invariant with respect to the operator $A_{bc}-J_{m, bc}X_*$. Hence, the right-hand part in (\ref{14-new}) is a subset of
$\sigma(A_{bc}-J_{m, bc}X_*)=\sigma(A_{bc}-B)$. The proof of the reverse embedding can be found in \cite[Sec.~4]{<2011>}.
\end{proof}

Now we can state the main theorems of Section~\ref{sec2} concerning the asymptotic behavior
of the eigenvalues of the operator $A_{bc}-B$, $bc\in\{per, ap, dir\}$.
\begin{theorem}\label{th4-asymptotic}
Let the spectrum of $A_{bc}-B$, $bc\in\{per, ap\}$, has the representation (\ref{14-new}) such that $\sigma_{(m)}$ is a
finite set with number of points not exceeding $2m+1$ and the sets $\sigma_n$, $n\geq m+1$, consist of at most two points.
Every set $\sigma_n$, $n\geq m+1$, coincides with the
spectrum of a matrix $\mathcal{A}_n$ having the following form
\begin{equation}\label{20-asympt}
\mathcal{A}_n=\frac{\pi^{2k}(2n+\theta)^{2k}}{\omega^{2k}}
\begin{pmatrix}
1 & 0 \\
0 & 1
\end{pmatrix}
-\mathcal{B}_n^\theta+\mathcal{C}_n,
\end{equation}
where
\[
\mathcal{B}_n^\theta=
\begin{pmatrix}
(Be_{-n-\theta}, e_{-n-\theta}) & (Be_n, e_{-n-\theta}) \\
(Be_{-n-\theta}, e_n) & (Be_n, e_n)
\end{pmatrix}
\]
and for all $n\geq n_0:=\max\{m+1, (3\|B\|_2\omega^{2k}\pi^{-2k}/2)^{1/(2k-1)}\}$, the Hilbert-Schmidt norm of the matrix
$\mathcal{C}_n$ satisfies the estimate
\begin{equation}\label{cnpn}
\|\mathcal{C}_n\|_2\leq
\frac{\omega^{2k}}{2\pi^{2k}n(2n+\theta)^{2k-2}}\|\mathbb{P}_nB-\mathbb{P}_nB\mathbb{P}_n\|_2
\|B\mathbb{P}_n-\mathbb{P}_nB\mathbb{P}_n\|_2.
\end{equation}
\end{theorem}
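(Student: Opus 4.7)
The plan is to turn the similarity assertion of Theorem~\ref{thsimilar-2} into explicit $2\times 2$ matrix data via Theorem~\ref{th2}, and then to extract the correction $\mathcal{C}_n$ by feeding $X_*$ into the fixed-point equation \eqref{eq52} and sandwiching it by $\mathbb{P}_n$. By Theorem~\ref{thsimilar-2}, condition \eqref{9-estimate} produces $X_*\in\mathfrak{S}_2(\mathcal{H})$ with $A_{bc}-B$ similar to $A_{bc}-J_{m,bc}X_*$, and Theorem~\ref{th2} decomposes the latter operator block-diagonally on $\mathcal{H}_{(m)}\oplus\bigoplus_{n\ge m+1}\mathcal{H}_n$. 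Since $\dim\mathcal{H}_n=2$ for $n\ge 1$ and $\dim\mathcal{H}_{(m)}\le 2m+1$, the cardinality bounds on $\sigma_n$ and $\sigma_{(m)}$ are automatic, and $\sigma_n$ is the spectrum of the restriction of $A_{bc}-\mathbb{P}_nX_*\mathbb{P}_n$ to $\mathcal{H}_n$.

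In the orthonormal basis $\{e_{-n-\theta},e_n\}$ of $\mathcal{H}_n$, $A_{bc}$ acts as $(\pi^{2k}(2n+\theta)^{2k}/\omega^{2k})\,I_2$, so the matrix representing $A_n$ is $\mathcal{A}_n=\lambda_n I_2-M(\mathbb{P}_nX_*\mathbb{P}_n)$, where $M(\cdot)$ denotes the matrix in that basis. Splitting $X_*=B+(X_*-B)$ identifies the $B$-contribution with $\mathcal{B}_n^\theta$, while $\mathcal{C}_n:=-M(\mathbb{P}_n(X_*-B)\mathbb{P}_n)$ is the residual, delivering \eqref{20-asympt}. The remaining task is the Hilbert--Schmidt bound \eqref{cnpn} on $\mathcal{C}_n$.

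For that bound I will use the identity $X_*-B=B\Gamma_{m,bc}X_*-(\Gamma_{m,bc}X_*)(J_{m,bc}B)-(\Gamma_{m,bc}X_*)J_{m,bc}(B\Gamma_{m,bc}X_*)$ from \eqref{eq52} and sandwich by $\mathbb{P}_n(\cdot)\mathbb{P}_n$. The key structural observation is that for $n\ge m+1$ one has $\mathbb{P}_n\mathbb{P}_{(m)}=0$, hence $\Gamma_{m,bc}X_*\mathbb{P}_n=\Gamma_{bc}X_*\mathbb{P}_n$ and $J_{m,bc}Y\,\mathbb{P}_n=\mathbb{P}_nY\mathbb{P}_n$, while by the off-diagonal nature of $\Gamma_{bc}$ the sandwich $\mathbb{P}_n\Gamma_{bc}(\cdot)\mathbb{P}_n$ vanishes identically. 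Consequently the two terms starting with $\Gamma_{m,bc}X_*$ on the left are killed, and only
\[
\mathbb{P}_n(X_*-B)\mathbb{P}_n=\mathbb{P}_nB(I-\mathbb{P}_n)\,\Gamma_{bc}X_*\,\mathbb{P}_n
\]
survives. Expanding $\Gamma_{bc}X_*\mathbb{P}_n=\sum_{s\ne n}(\lambda_s-\lambda_n)^{-1}\mathbb{P}_sX_*\mathbb{P}_n$ and using the spectral-gap lower bound $\min_{s\ne n}|\lambda_s-\lambda_n|\ge 2\pi^{2k}n(2n+\theta)^{2k-2}/\omega^{2k}$ (which follows from convexity of $t\mapsto t^{2k}$), combined with the submultiplicativity $\|AC\|_2\le\|A\|_2\|C\|_2$, yields
\[
\|\mathcal{C}_n\|_2\le\frac{\omega^{2k}}{2\pi^{2k}n(2n+\theta)^{2k-2}}\,\|\mathbb{P}_nB(I-\mathbb{P}_n)\|_2\,\|(I-\mathbb{P}_n)X_*\mathbb{P}_n\|_2.
\]

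The main obstacle is to replace $\|(I-\mathbb{P}_n)X_*\mathbb{P}_n\|_2$ on the right-hand side by $\|(I-\mathbb{P}_n)B\mathbb{P}_n\|_2$. My plan is to feed the same identity back into itself, now sandwiched as $(I-\mathbb{P}_n)(\cdot)\mathbb{P}_n$; each resulting term gains a factor of $\Gamma_{m,bc}X_*\mathbb{P}_n$, whose Hilbert--Schmidt norm is again controlled by $\gamma(n)\|(I-\mathbb{P}_n)X_*\mathbb{P}_n\|_2$ with $\gamma(n)=\omega^{2k}/(2\pi^{2k}n(2n+\theta)^{2k-2})$. For $n\ge n_0$, the product $\gamma(n)\|B\|_2$ is strictly bounded away from~$1$---this is precisely what the threshold $n_0=\max\{m+1,(3\|B\|_2\omega^{2k}\pi^{-2k}/2)^{1/(2k-1)}\}$ enforces---so a geometric-series absorption argument closes the loop and gives $\|(I-\mathbb{P}_n)X_*\mathbb{P}_n\|_2\le\|(I-\mathbb{P}_n)B\mathbb{P}_n\|_2$. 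Keeping the leading constant in \eqref{cnpn} equal to $1$ rather than a larger number is the delicate quantitative point of the whole argument.
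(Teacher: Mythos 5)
Your overall strategy coincides with the paper's: sandwich the fixed--point equation \eqref{eq52} by $\mathbb{P}_n(\cdot)\mathbb{P}_n$, observe that the two terms beginning with $\Gamma_{m,bc}X_*$ die because $\mathbb{P}_n(\Gamma_{m,bc}Y)\mathbb{P}_n=0$ while $J_{m,bc}$ is block--diagonal, reduce to $\mathbb{P}_n(X_*-B)\mathbb{P}_n=\mathbb{P}_n(B-\mathbb{P}_nB\mathbb{P}_n)(\Gamma_{m,bc}X_*)\mathbb{P}_n$, bound $\|(\Gamma_{m,bc}X_*)\mathbb{P}_n\|_2$ by the reciprocal spectral gap times $\|(I-\mathbb{P}_n)X_*\mathbb{P}_n\|_2$, and finally bootstrap the latter against $\|(I-\mathbb{P}_n)B\mathbb{P}_n\|_2$ by feeding \eqref{eq52} back into itself. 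This is exactly the paper's proof.

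However, the last step as you state it does not close, and you have put the factor of $2$ in the wrong place. Your spectral--gap bound $\min_{s\ne n}|\lambda_s-\lambda_n|\ge 2\pi^{2k}n(2n+\theta)^{2k-2}/\omega^{2k}$ is true but lossy: writing $(2n+\theta)^{2k}-(2n-2+\theta)^{2k}$ as a difference of squares times a sum gives the sharper bound $\min_{s\ne n}|\lambda_s-\lambda_n|\ge 4\pi^{2k}(2n-1)(2n+\theta)^{2k-2}/\omega^{2k}\ge 4\pi^{2k}n(2n+\theta)^{2k-2}/\omega^{2k}$, i.e.\ $d_n\le \omega^{2k}/(4\pi^{2k}n(2n+\theta)^{2k-2})$, half of your $\gamma(n)$. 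Because you start from the weaker gap, you are forced to claim the absorption inequality $\|(I-\mathbb{P}_n)X_*\mathbb{P}_n\|_2\le\|(I-\mathbb{P}_n)B\mathbb{P}_n\|_2$ with constant exactly $1$. No geometric--series argument can deliver constant $1$: applying $(\cdot)\mathbb{P}_n$ to \eqref{eq52} and subtracting the diagonal block yields $\|(I-\mathbb{P}_n)X_*\mathbb{P}_n\|_2\le\|(I-\mathbb{P}_n)B\mathbb{P}_n\|_2+3d_n\|B\|_2\,\|(I-\mathbb{P}_n)X_*\mathbb{P}_n\|_2$, whose absorption constant is $(1-3d_n\|B\|_2)^{-1}>1$ for every finite $n$, and there is no reason for $(I-\mathbb{P}_n)(X_*-B)\mathbb{P}_n$ to vanish or to shrink the norm. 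The correct bookkeeping is the paper's: the threshold $n\ge n_0$ guarantees $3d_n\|B\|_2\le 1/2$, hence absorption constant $2$, and the product $2d_n\le \omega^{2k}/(2\pi^{2k}n(2n+\theta)^{2k-2})$ recovers precisely the constant in \eqref{cnpn}. So the fix is purely quantitative --- use the sharper gap estimate and accept the factor $2$ in the bootstrap --- but as written your final inequality is unattainable and the proof does not yield \eqref{cnpn}.
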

\begin{proof}
Applying the projection $\mathbb{P}_n$ to the left-hand and right-hand parts of the equation (\ref{eq52}) with $X=X_*$, we obtain
\begin{equation}\label{23-equation}
\mathbb{P}_nX_*\mathbb{P}_n=\mathbb{P}_nB\mathbb{P}_n+\mathbb{P}_n(B\Gamma_{m, bc}X_*)\mathbb{P}_n,
\quad n\geq m+1.
\end{equation}
Next, for all operators $X, Y\in\mathfrak{S}_2(\mathcal{H})$ we have the following equalities :
\begin{flalign*}
(J_{m, bc}&X)\mathbb{P}_n=\mathbb{P}_n(J_{bc}X)\mathbb{P}_n=\mathbb{P}_nX\mathbb{P}_n, \quad
\mathbb{P}_n(J_{m, bc}X)(\Gamma_{m, bc}Y)\mathbb{P}_n=0, \\
&\Gamma_{m, bc}(\mathbb{P}_nX\mathbb{P}_n)=0, \quad \mathbb{P}_n((J_{m, bc}X)\Gamma_{m, bc}Y)\mathbb{P}_n=0,
\quad n\geq m+1.
\end{flalign*}
Then
\[
\mathbb{P}_n(B\Gamma_{m, bc}X_*)\mathbb{P}_n = \mathbb{P}_n(B-J_{m, bc}B)(\Gamma_{m, bc}X_*)\mathbb{P}_n=
\mathbb{P}_n(B - \mathbb{P}_nB\mathbb{P}_n)(\Gamma_{m, bc}X_*)\mathbb{P}_n
\]
and, consequently,
\begin{equation}\label{pnbxpn}
\|\mathbb{P}_n(B\Gamma_{m, bc}X_*)\mathbb{P}_n\|_2\leq \|\mathbb{P}_nB-\mathbb{P}_nB\mathbb{P}_n\|_2
\|(\Gamma_{m, bc}X_*)\mathbb{P}_n\|_2, \quad n\geq m+1.
\end{equation}

Now we estimate the second factor in (\ref{pnbxpn}). Using the equalities
$(\Gamma_{m, bc}X_*)\mathbb{P}_n=\linebreak \Gamma_{m, bc}(X_*-J_{m, bc}X_*)\mathbb{P}_n$, $n\geq m+1$, we get
\[
\|(\Gamma_{m, bc}X_*)\mathbb{P}_n\|_2\leq \frac{\|(X_*-J_{m, bc}X_*)
\mathbb{P}_n\|_2}{\min\limits_{j\ne n}|\lambda_j-\lambda_n|}=
d_n\|X_*\mathbb{P}_n-\mathbb{P}_nX_*\mathbb{P}_n\|_2,
\]
where
\[
d_n=\frac{1}{\min\limits_{j\ne n}|\lambda_j-\lambda_n|}\leq \frac{\omega^{2k}}{4\pi^{2k}(2n-1)(2n+\theta)^{2k-2}}\leq
\frac{\omega^{2k}}{4\pi^{2k}n(2n+\theta)^{2k-2}}.
\]

Since the operator $X_*$ satisfies (\ref{eq52}) and $\mathbb{P}_nX_*\mathbb{P}_n=\mathbb{P}_nB\mathbb{P}_n+
\mathbb{P}_nB\Gamma_{m, bc}(X_*-J_{m, bc}X_*)\mathbb{P}_n$, $n\geq m+1$, we have
\begin{flalign*}
(X_* &- J_{m, bc}X_*)\mathbb{P}_n = X_*\mathbb{P}_n-\mathbb{P}_nX_*\mathbb{P}_n =
(B-\mathbb{P}_nB\mathbb{P}_n)\mathbb{P}_n \\
&+ B\Gamma_{m, bc}(X_*-\mathbb{P}_nX_*\mathbb{P}_n)\mathbb{P}_n -
\Gamma_{m, bc}(X_* - \mathbb{P}_nX_*\mathbb{P}_n)\mathbb{P}_nB\mathbb{P}_n-
\mathbb{P}_nB\Gamma_{m, bc}(X_*-\mathbb{P}_nX_*\mathbb{P}_n)\mathbb{P}_n.
\end{flalign*}
Therefore,
\begin{flalign*}
\|X_*\mathbb{P}_n &- \mathbb{P}_nX_*\mathbb{P}_n\|_2\leq
\|B\mathbb{P}_n-\mathbb{P}_nB\mathbb{P}_n\|_2 + d_n\|B\|_2\|X_*\mathbb{P}_n-\mathbb{P}_nX_*\mathbb{P}_n\|_2 +
d_n\|B\|_2\|X_*\mathbb{P}_n-\mathbb{P}_nX_*\mathbb{P}_n\| \\
&+ d_n\|B\|_2\|X_*\mathbb{P}_n-\mathbb{P}_nX_*\mathbb{P}_n\|_2 = \|B\mathbb{P}_n-\mathbb{P}_nB\mathbb{P}_n\|_2 +
3d_n\|B\|_2\|X_*\mathbb{P}_n-\mathbb{P}_nX_*\mathbb{P}_n\|_2.
\end{flalign*}
Thus, for $n\in\mathbb{N}$ such that $3d_n\|B\|_2\leq 1/2$,
\[
\|X_*\mathbb{P}_n-\mathbb{P}_nX_*\mathbb{P}_n\|_2\leq 2\|B\mathbb{P}_n-\mathbb{P}_nB\mathbb{P}_n\|_2.
\]

Consequently,
\begin{equation}\label{gx}
\|(\Gamma_{m, bc}X_*)\mathbb{P}_n\|_2 = \|\Gamma_{m, bc}(X_*\mathbb{P}_n-\mathbb{P}_nX_*\mathbb{P}_n)\|_2\leq
2d_n\|B\mathbb{P}_n - \mathbb{P}_nB\mathbb{P}_n\|_2,
\end{equation}
which is the desired estimation of the second factor in (\ref{pnbxpn}).

Using (\ref{pnbxpn}) and (\ref{gx}), we get
\begin{equation}\label{cnnew}
\|\mathbb{P}_n(X_*-B)\mathbb{P}_n\|_2 \leq \frac{\omega^{2k}}{2\pi^{2k}(2n-1)(2n+\theta)^{2k-2}}
\|\mathbb{P}_nB-\mathbb{P}_nB\mathbb{P}_n\|_2\|B\mathbb{P}_n-\mathbb{P}_nB\mathbb{P}_n\|_2,
\quad n\geq n_0.
\end{equation}

Since the restrictions of the operators from (\ref{23-equation}) to $\mathcal{H}_n$ coincide with matrices $\mathcal{B}_n$ and
$\mathcal{C}_n$, the matrix $\mathcal{C}_n$ satisfies the estimation (\ref{cnnew}). This proves Theorem~\ref{th4-asymptotic}.
\end{proof}
\begin{theorem}\label{th3}
Let the spectrum of the operator $A_{dir}-B$ has the form (\ref{14-new}) such that $\sigma_{(m)}$ is a finite set with
number of points not exceeding $m$ and $\sigma_n$ is a singleton $\{\widetilde{\lambda}_{n, dir}\}$, $n\geq m+1$. The eigenvalues
$\widetilde{\lambda}_{n, dir}$, $n\geq n_1$, have the following asymptotic representation
\[
\widetilde{\lambda}_{n, dir}=\pi^{2k}n^{2k}\omega^{-2k} -(Be_{n, dir}, e_{n, dir}) +
\eta_n, \quad n\geq n_1,
\]
where the sequence $(\eta_n)$ satisfies the estimate
\begin{equation}\label{etapnbpn}
|\eta_n|\leq \frac{2\omega^{2k}}{\pi^{2k}n^{2k-1}}
\|P_{n, dir}B-P_{n, dir}BP_{n, dir}\|_2\|BP_{n, dir}-P_{n, dir}BP_{n, dir}\|_2, \quad n\geq n_1,
\end{equation}
and $n_1=\max\{m+1, (6\|B\|_2\omega^{2k}\pi^{-2k})^{1/(2k-1)}\}$.
\end{theorem}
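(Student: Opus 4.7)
The plan is to mimic the proof of Theorem~\ref{th4-asymptotic} but exploit that every projection $P_{n,dir}$, $n\geq m+1$, is of rank one. This simplification makes each $\sigma_n$ automatically a singleton and lets us read off $\widetilde{\lambda}_{n,dir}$ directly from a scalar restriction rather than from a $2\times 2$ matrix.

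First I would apply $P_{n,dir}$ from both sides to equation (\ref{eq52}) with $X=X_*$, obtaining
\[
P_{n,dir}X_*P_{n,dir}=P_{n,dir}BP_{n,dir}+P_{n,dir}(B\Gamma_{m,dir}X_*)P_{n,dir},\quad n\geq m+1.
\]
Using the same four algebraic identities used in Theorem~\ref{th4-asymptotic} (here for $P_{n,dir}$ in place of $\mathbb{P}_n$): namely $(J_{m,dir}X)P_{n,dir}=P_{n,dir}XP_{n,dir}$, $\Gamma_{m,dir}(P_{n,dir}XP_{n,dir})=0$, and the two $P_{n,dir}\cdots P_{n,dir}$ sandwiches involving $J_{m,dir}$ and $\Gamma_{m,dir}$ vanishing, I rewrite the nonlinear term as
\[
P_{n,dir}(B\Gamma_{m,dir}X_*)P_{n,dir}=P_{n,dir}(B-P_{n,dir}BP_{n,dir})(\Gamma_{m,dir}X_*)P_{n,dir},
\]
so that
\[
\|P_{n,dir}(X_*-B)P_{n,dir}\|_2\leq \|P_{n,dir}B-P_{n,dir}BP_{n,dir}\|_2\,\|(\Gamma_{m,dir}X_*)P_{n,dir}\|_2.
\]

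Next I would bound the second factor by repeating the bootstrap argument from the proof of Theorem~\ref{th4-asymptotic}: write $(\Gamma_{m,dir}X_*)P_{n,dir}=\Gamma_{m,dir}(X_*P_{n,dir}-P_{n,dir}X_*P_{n,dir})$ and estimate it in $\mathfrak{S}_2$ by $d_n\|X_*P_{n,dir}-P_{n,dir}X_*P_{n,dir}\|_2$, where
\[
d_n=\frac{1}{\min_{j\neq n}|\lambda_{j,dir}-\lambda_{n,dir}|}\leq \frac{\omega^{2k}}{\pi^{2k}n^{2k-1}},
\]
the last inequality coming from $n^{2k}-(n-1)^{2k}\geq n^{2k-1}$ for $k\geq 1$. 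Inserting the fixed-point equation for $X_*$ into the expression $X_*P_{n,dir}-P_{n,dir}X_*P_{n,dir}$ gives, exactly as in Theorem~\ref{th4-asymptotic}, the self-bounding inequality
\[
\|X_*P_{n,dir}-P_{n,dir}X_*P_{n,dir}\|_2\leq \|BP_{n,dir}-P_{n,dir}BP_{n,dir}\|_2+3d_n\|B\|_2\|X_*P_{n,dir}-P_{n,dir}X_*P_{n,dir}\|_2.
\]
Imposing $3d_n\|B\|_2\leq 1/2$ (which is precisely the condition defining the threshold $n_1=\max\{m+1,(6\|B\|_2\omega^{2k}\pi^{-2k})^{1/(2k-1)}\}$) I absorb the right-hand term and obtain the factor $2$, hence
\[
\|(\Gamma_{m,dir}X_*)P_{n,dir}\|_2\leq 2d_n\|BP_{n,dir}-P_{n,dir}BP_{n,dir}\|_2,\quad n\geq n_1.
\]

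Finally, since $P_{n,dir}$ has rank one, the space $\mathcal{H}_n$ from Theorem~\ref{th2} is one-dimensional and the restriction of $A_{dir}-P_{n,dir}X_*P_{n,dir}$ to $\mathcal{H}_n$ is multiplication by the scalar $\lambda_{n,dir}-(X_*e_{n,dir},e_{n,dir})$. Writing $(X_*e_{n,dir},e_{n,dir})=(Be_{n,dir},e_{n,dir})-\eta_n$, the preceding estimates yield $\widetilde{\lambda}_{n,dir}=\pi^{2k}n^{2k}/\omega^{2k}-(Be_{n,dir},e_{n,dir})+\eta_n$ with $|\eta_n|\leq \|P_{n,dir}(X_*-B)P_{n,dir}\|_2$ satisfying the announced bound (\ref{etapnbpn}). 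The main technical point is the bootstrap absorption step, which forces the threshold $n_1$; every other step is a direct Dirichlet analogue of the periodic/semiperiodic argument, made easier by the simplicity of the eigenvalues $\lambda_{n,dir}$.
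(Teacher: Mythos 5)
Your proposal is correct and follows exactly the route the paper intends: the paper gives no separate argument for this theorem, stating only that the proof is similar to that of Theorem~\ref{th4-asymptotic}, and your adaptation (projecting the fixed-point equation with $P_{n,dir}$, the bootstrap absorption with $3d_n\|B\|_2\le 1/2$ yielding the stated $n_1$, and reading off the eigenvalue from the rank-one restriction) is precisely that adaptation, with the constants checking out.
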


The proof is similar to the proof of Theorem~\ref{th4-asymptotic}.

\section{Preliminary similarity transformation of the operator $L_{bc}$}\label{sec3}

In this section we apply the method of similar operators to the operator $L_{bc}=L_{bc}^0-Q$, $bc\in\{per, ap, dir\}$ in the
Hilbert space $\mathcal H:=L_2[0, \omega]$. In the previous section we already construct the
admissible triple $(\mathfrak{S}_2(\mathcal{H}), J_m, \Gamma_m)$, but in this case we cannot apply directly the results of
Section~\ref{sec2} since the perturbation $Q$ does not belong to
$\mathfrak{S}_2(\mathcal{H})$. In this situation, following \cite[Sec.~2]{<2011>}, we will use a preliminary similarity
transformation to transform the operator $L_{bc}=L_{bc}^0-Q$ into an operator of the form $L_{bc}^0-\widetilde{Q}$,
where $\widetilde{Q}\in\mathfrak{S}_2(\mathcal{H})$.

Since the operator $Q$ belongs to $\mathfrak{L}_{L_{bc}^0}(\mathcal{H})$, the transformers
$J_{m, bc}$, $\Gamma_{m, bc}$ are well-defined by the formulas (\ref{jmbc})~--~(\ref{gmdir}). Here we take into account that
$P_sQP_j$, $s, j\in\mathbb{Z}$, are Hilbert-Schmidt operators.
\begin{remark}
Since a shift of the potential $q$ to a constant shifts the spectrum to the same constant, in proving our results we may
assume without loss of generality that $q_0=0$.
\end{remark}

Now we define the matrix of operators $Q$. If $bc\in\{per, ap\}$, then the matrix of operators
has the form $(b_{sj})$, $s, j\in\mathbb{Z}$, where
\begin{equation}\label{bsj}
b_{sj} = \frac{1}{\omega}\int_0^\omega q(t)e_s(t)\overline{e_j(t)}\,dt=
\frac{1}{\omega}\sum_{l\in\mathbb{Z}}q_l\int_0^\omega e^{i2\pi lt/\omega}e^{-i\pi(2s+\theta)t/\omega}
e^{i\pi(2j+\theta)t/\omega}\,dt=q_{s-j}.
\end{equation}

In the case $bc=dir$ we compute the matrix coefficients $q_{sj}$, $s, j\in\mathbb{N}$, of the operator $Q$. We get
\begin{flalign*}
q_{sj} &= \frac{1}{\omega}\int_0^\omega q(t)e_{s, dir}(t)\overline{e_{j, dir}(t)}\,dt = \frac{2\sqrt{2}}{\omega}
\sum_{l=1}^\infty \widetilde{q}_l\int_0^\omega
\cos\frac{\pi l}{\omega}t\sin\frac{\pi s}{\omega}t\sin\frac{\pi j}{\omega}\,dt \\
&= \frac{\sqrt{2}}{\omega}\sum_{l=1}^\infty \widetilde{q}_l\int_0^\omega
\cos\frac{\pi l}{\omega}t\big(\cos\frac{\pi}{\omega}(s-j)t - \cos\frac{\pi}{\omega}(s+j)t\big)\,dt \\
&= \frac{\sqrt{2}}{\omega}(\widetilde{q}_{|s-j|}-\widetilde{q}_{s+j})\int_0^\omega\cos^2\frac{\pi l}{\omega}t\,dt =
\frac{1}{\sqrt{2}}(\widetilde{q}_{|s-j|}-\widetilde{q}_{s+j}).
\end{flalign*}
Thus,
\begin{equation}\label{qij}
q_{sj}=\frac{1}{\sqrt{2}}(\widetilde{q}_{|s-j|}-\widetilde{q}_{s+j}), \quad s, j\in\mathbb{N}.
\end{equation}

The following lemmas are needed in the sequel.
\begin{lemma}\label{lhgv-gsH}
The operators $\Gamma_{m, bc}Q$, $m\in\mathbb{J}$, $bc\in\{per, ap, dir\}$, are Hilbert-Schmidt ones and
there exists $m\in\mathbb{J}$ such that $\|\Gamma_{m, bc}Q\|_2<1$. Moreover, the following estimates hold:
\begin{flalign}
&\|\mathbb{P}_n(\Gamma_{m, bc}Q)\|_2=\|(\Gamma_{m, bc}Q)\mathbb{P}_n\|_2\leq
\frac{\omega^{2k}}{\pi^{2k}\sqrt{2}}\frac{\|q\|_2}{(2n+\theta)^{2k-1}}, \quad m, n\in\mathbb{Z}_+, \label{pngvH}\\
&\|P_{n, dir}(\Gamma_{m, dir}Q)\|_2=\|(\Gamma_{m, dir}Q)P_{n, dir}\|_2\leq
\frac{\omega^{2k}\|q\|_2}{\pi^{2k}n^{2k-1}}, \quad m, n\in\mathbb{N}. \label{pndirgqH}
\end{flalign}
\end{lemma}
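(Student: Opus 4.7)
The plan is to represent $\Gamma_{m,bc}Q$ as a matrix in the orthonormal eigenbasis of $A_{bc}$, estimate each matrix block via formulas (\ref{bsj})--(\ref{qij}), and bound the eigenvalue-difference denominators by an elementary polynomial inequality. Parseval's identity will then produce the pointwise estimates (\ref{pngvH})--(\ref{pndirgqH}); summing over $n$ in turn gives the global Hilbert--Schmidt property and, by tail estimate, the existence of $m$ with $\|\Gamma_{m,bc}Q\|_2 < 1$. The periodic/antiperiodic case carries the essential idea; the Dirichlet case is handled by a verbatim argument.

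For $bc \in \{per, ap\}$ and $n \ge m+1$, the identity $\mathbb{P}_n\mathbb{P}_{(m)} = 0$ combined with (\ref{gmbc}) gives
\[
\mathbb{P}_n(\Gamma_{m,bc}Q) \;=\; \sum_{j \ge 0,\, j \ne n} \frac{\mathbb{P}_n Q \mathbb{P}_j}{\lambda_n - \lambda_j}.
\]
Using (\ref{bsj}) to read off the four matrix entries of the two-dimensional block $\mathbb{P}_n Q \mathbb{P}_j$, one obtains $\|\mathbb{P}_n Q \mathbb{P}_j\|_2^2 = |q_{j-n}|^2 + |q_{n-j}|^2 + |q_{j+n+\theta}|^2 + |q_{-j-n-\theta}|^2$, and orthogonality of the subspaces $\mathrm{Im}\,\mathbb{P}_j$ yields $\|\mathbb{P}_n(\Gamma_{m,bc}Q)\|_2^2 = \sum_{j \ne n}|\lambda_n-\lambda_j|^{-2}\|\mathbb{P}_n Q \mathbb{P}_j\|_2^2$. (For $n \le m$ the same identity holds with the sum restricted to $j > m$, which only strengthens the resulting bound.) The companion estimate for $\|(\Gamma_{m,bc}Q)\mathbb{P}_n\|_2$ follows by an analogous column expansion.

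The crucial arithmetic inequality is
\[
|\lambda_n - \lambda_j| \;\ge\; \frac{2\pi^{2k}}{\omega^{2k}}\,|n-j|\,(2n+\theta)^{2k-1}, \qquad j \ne n,
\]
obtained by factoring $a^{2k}-b^{2k} = (a-b)\sum_{i=0}^{2k-1} a^{2k-1-i}b^i$ with $a = 2n+\theta$, $b = 2j+\theta$ and using $\sum_{i=0}^{2k-1}a^{2k-1-i}b^i \ge \max(a,b)^{2k-1} \ge (2n+\theta)^{2k-1}$. Inserting this bound, performing the substitutions $r = j-n$ in the summands involving $|q_{\pm(j-n)}|^2$ and $s = j+n+\theta$ in those involving $|q_{\pm(j+n+\theta)}|^2$, and using $1/r^2 \le 1$ for nonzero integers $r$ and $1/(s-(2n+\theta))^2 \le 1$ for $s \ne 2n+\theta$, reduces the inner sum to a constant multiple of $\sum_l |q_l|^2 = \|q\|_2^2$ by Parseval, which yields (\ref{pngvH}). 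The Dirichlet analysis is parallel: the one-dimensional block $P_{n,dir}QP_{j,dir}$ has matrix entry $\frac{1}{\sqrt 2}(\widetilde q_{|n-j|}-\widetilde q_{n+j})$ by (\ref{qij}), the analogous eigenvalue bound reads $|\lambda_{n,dir}-\lambda_{j,dir}| \ge \frac{2\pi^{2k}}{\omega^{2k}}|n-j|\,n^{2k-1}$, and Parseval for the cosine expansion of $q$ delivers (\ref{pndirgqH}).

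To conclude the two global assertions, square and sum (\ref{pngvH})--(\ref{pndirgqH}) over $n$: since $4k-2 \ge 6$ for $k > 1$, the series $\sum_n (2n+\theta)^{-(4k-2)}$ converges, giving $\Gamma_{m,bc}Q \in \mathfrak{S}_2(\mathcal{H})$, and the tail of this series vanishes as $m \to \infty$, producing some $m$ with $\|\Gamma_{m,bc}Q\|_2 < 1$. The main technical obstacle is keeping the bookkeeping of the four Fourier-coefficient summands in $\|\mathbb{P}_nQ\mathbb{P}_j\|_2^2$ tight enough that the substitutions $r = j-n$ and $s = j+n+\theta$ each yield Parseval exactly once without double-counting the tail of the Fourier series of $q$; this is where the constant $1/\sqrt 2$ in (\ref{pngvH}) is pinned down.
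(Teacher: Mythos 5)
Your argument is essentially the paper's own proof: both expand $\Gamma_{m, bc}Q$ in the eigenbasis of $L_{bc}^0$, bound the spectral gaps from below via the factorization $a^{2k}-b^{2k}=(a-b)\sum_{i}a^{2k-1-i}b^{i}$ to get $|\lambda_n-\lambda_j|\ge 2\pi^{2k}\omega^{-2k}|n-j|(2n+\theta)^{2k-1}$, and apply Parseval exactly once per unfolded row of the block $\mathbb{P}_nQ\mathbb{P}_j$ to recover the constant $1/\sqrt{2}$; the paper merely organizes this as a single double sum over $s, j\in\mathbb{Z}$ and then restricts $s$ to $\{n, -n-\theta\}$, whereas you work block by block. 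The one slip is your Dirichlet gap inequality $n^{2k}-j^{2k}\ge 2|n-j|\,n^{2k-1}$, which fails for small $n$ (e.g.\ $n=2$, $j=1$, $k=2$ gives $15<16$), so only the factor $1$ is available there and your route yields (\ref{pndirgqH}) only up to a harmless absolute constant; since every downstream use of (\ref{pndirgqH}) absorbs constants into $C$, and the paper itself dispatches the Dirichlet case with ``similar arguments'', this does not affect the result.
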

\begin{proof}
Let $bc\in\{per, ap\}$. Using (\ref{bsj}), we have
\begin{flalign}\label{gbhs}
\sum_{s, j\in\mathbb{Z}}|(\Gamma_{bc}Qe_j, e_s)|^2 &= \sum_{s, j\in\mathbb{Z}}\frac{|q_{s-j}|^2}{|\lambda_s-\lambda_j|^2}=
\frac{\omega^{4k}}{\pi^{4k}}\sum_{s, j\in\mathbb{Z}}
\frac{|q_{s-j}|^2}{((2s+\theta)^k-(2j+\theta)^k)^2((2s+\theta)^k+(2j+\theta)^k)^2} \nonumber\\
&\leq \frac{\omega^{4k}}{4\pi^{4k}}\sum_{s\in\mathbb{Z}}\frac{1}{(2s+\theta)^{2k}}\sum_{j\in\mathbb{Z}}
\frac{|q_{s-j}|^2}{(s-j)^2((2s+\theta)^{k-1}+\dots+(2j+\theta)^{k-1})^2} \nonumber\\
&\leq \frac{\omega^{4k}\|q\|_2^2}{4\pi^{4k}}\sum_{s\in\mathbb{Z}}\frac{1}{(2s+\theta)^{4k-2}}<\infty,
\end{flalign}
where $\|q\|_2^2=\sum_{s\in\mathbb{Z}}|q_s|^2$. Therefore, $\Gamma_{bc}Q\in\mathfrak{S}_2(\mathcal{H})$.

Next, since the operators $\Gamma_{m, bc}Q$ differ from $\Gamma_{bc}Q$ by some finite rank operator (see formula (\ref{gmbc})),
then $\Gamma_{m, bc}Q\in\mathfrak{S}_2(\mathcal{H})$. Taking $s=\pm n$ and $j=\pm n$ in (\ref{gbhs}), we obtain
(\ref{pngvH}) and (\ref{pndirgqH}), respectively.

Moreover, from (\ref{gmbc}) we get
\[
\lim_{m\to\infty}\|\Gamma_{m, bc}Q\|_2^2 = \lim_{m\to\infty}\|\Gamma_{bc}Q-\mathbb{P}_{(m)}(\Gamma_{bc}Q)\mathbb{P}_{(m)}\|_2^2=
\lim_{m\to\infty}\sum_{s, j\geq m+1}\|\mathbb{P}_s(\Gamma_{m, bc}Q)\mathbb{P}_j\|_2^2=0.
\]
Hence, we can find an $m\in\mathbb{Z}_+$ such that $\|\Gamma_{m, bc}Q\|_2<1$.

Similar arguments with (\ref{qij}) and (\ref{gmbc}) prove the result in the case $bc=dir$.
\end{proof}

\begin{lemma}\label{lhvgv-gsH}
The operators $Q\Gamma_{m, bc}Q$ and $(\Gamma_{m, bc}Q)J_{m, bc}Q$, $m\in\mathbb{J}$, are Hilbert-Schmidt ones
and the following estimates hold:
\begin{flalign}
&\|(Q\Gamma_{m, bc}Q)\mathbb{P}_n\|_2\leq \frac{\omega^{2k}\|q\|_2}{2\sqrt{3}\pi^{2k-1}(2n+\theta)^{2k-2}}\alpha(2n+\theta),
\label{vgvpnH} \\
&\|\mathbb{P}_n(Q\Gamma_{m, bc}Q)\|_2\leq C, \label{pnqgq}\\
&\|(Q\Gamma_{m, dir}Q)P_{n, dir}\|_2\leq \frac{2\omega^{2k}\|q\|_2}{\pi^{2k-1}n^{2k-2}\sqrt{3}}\beta(2n),\label{qgdirqpn} \\
&\|P_{n, dir}(Q\Gamma_{m, dir}Q)\|_2\leq C \label{pnggdirq}.
\end{flalign}
\end{lemma}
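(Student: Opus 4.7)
The plan is to reduce each assertion to an explicit matrix computation in the orthonormal eigenbasis of $L_{bc}^0$, using formulas (\ref{bsj}) and (\ref{qij}), and to bound the resulting double sums via Young's convolution inequality combined with a sharp estimate of the spectral gaps $|\lambda_l-\lambda_n|$.

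The Hilbert--Schmidt property of $(\Gamma_{m,bc}Q)(J_{m,bc}Q)$ is immediate: $\Gamma_{m,bc}Q\in\mathfrak{S}_2(\mathcal{H})$ by Lemma~\ref{lhgv-gsH}, while $J_{m,bc}Q=\mathbb{P}_{(m)}Q\mathbb{P}_{(m)}+\sum_{n>m}\mathbb{P}_nQ\mathbb{P}_n$ is the block-diagonal part of $Q$, whose blocks are of rank at most two with entries among $q_0$ and $q_{\pm(2n+\theta)}$, and are therefore uniformly bounded by $\|q\|_2$ (via Parseval); so $J_{m,bc}Q$ defines a bounded operator on $\mathcal{H}$, and the composition is Hilbert--Schmidt. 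For $Q\Gamma_{m,bc}Q$ the factor $Q$ is unbounded, so I would not argue by composition but instead work with the matrix directly.

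For the pointwise bound (\ref{vgvpnH}), since $\mathbb{P}_n$ has rank at most two one has
\[
\|(Q\Gamma_{m,bc}Q)\mathbb{P}_n\|_2^2=\|(Q\Gamma_{m,bc}Q)e_n\|^2+\|(Q\Gamma_{m,bc}Q)e_{-n-\theta}\|^2,
\]
and from (\ref{bsj}) the $s$-th Fourier coefficient of $(Q\Gamma_{m,bc}Q)e_n$ equals $\sum_l q_{s-l}q_{l-n}/(\lambda_l-\lambda_n)$ (with the usual exclusions). Regarded as a function of $s$ this is the discrete convolution $q\ast f_n$ with $f_n(l)=q_{l-n}/(\lambda_l-\lambda_n)$, so Young's inequality gives $\|(Q\Gamma_{m,bc}Q)e_n\|\leq\|q\|_2\cdot\|f_n\|_1$, and Cauchy--Schwarz on $\|f_n\|_1=\sum_l|f_n(l)|$ yields
\[
\|f_n\|_1\leq\|q\|_2\,\Bigl(\sum_{l\ne n,\,-n-\theta}|\lambda_l-\lambda_n|^{-2}\Bigr)^{1/2}.
\]

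The delicate and, I expect, main step is the sharp estimate of this spectral sum: the crude bound $|\lambda_l-\lambda_n|\geq C(2n+\theta)^{2k-1}$ yields only $(2n+\theta)^{-(2k-1)}$ and misses the extra factor $1/n$ that makes $\alpha(2n+\theta)$ square summable. The plan is to factor
\[
|(2l+\theta)^{2k}-(2n+\theta)^{2k}|=\bigl||2l+\theta|-(2n+\theta)\bigr|\cdot\bigl(|2l+\theta|+(2n+\theta)\bigr)\cdot\sum_{j=0}^{k-1}|2l+\theta|^{2j}(2n+\theta)^{2(k-1-j)},
\]
bound the last sum from below by $(2n+\theta)^{2k-2}$ (via the $j=0$ term), and reduce the remainder to $\sum_{m\ne 0}1/(m^2(m+n)^2)\leq C/n^2$; taking square roots supplies the missing $1/n$. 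Substituting gives (\ref{vgvpnH}) with $\alpha(2n+\theta)=O(1/n)$, square summable; summing this bound over $n$ simultaneously shows $Q\Gamma_{m,bc}Q\in\mathfrak{S}_2(\mathcal{H})$, whence (\ref{pnqgq}) follows from $\|\mathbb{P}_nX\|_2\leq\|X\|_2$ for any Hilbert--Schmidt $X$. The Dirichlet estimates (\ref{qgdirqpn}) and (\ref{pnggdirq}) are then obtained by the same scheme with (\ref{bsj}) replaced by (\ref{qij}): the matrix entries $(\widetilde{q}_{|s-j|}-\widetilde{q}_{s+j})/\sqrt{2}$ remain controlled by $\|q\|_2$, and the Dirichlet eigenvalues $\lambda_{n,dir}=\pi^{2k}n^{2k}/\omega^{2k}$ admit the same factorization, with the rank-one projections $P_{n,dir}$ trivializing the two-term decomposition.
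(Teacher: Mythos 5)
Your argument is sound and reaches the stated estimates, but the key step is organized differently from the paper. The paper keeps $q$ inside the double sum: after the same factorization of $\lambda_j-\lambda_n$ it applies Cauchy--Schwarz to the inner sum $\sum_p q_{s-n-p}q_p/(p(p+2n+\theta))$, splitting the weight between the two Fourier factors; summing over $s$ then produces the specific majorant $\alpha(2n+\theta)$ of (\ref{alpha}), which records exactly which Fourier coefficients of $q$ enter and is the sequence quoted verbatim in the main theorems. You instead decouple $q$ from the spectral weights via Young's inequality $\|q\ast f_n\|_{\ell^2}\le\|q\|_{\ell^2}\|f_n\|_{\ell^1}$ followed by Cauchy--Schwarz on $\|f_n\|_1$, reducing everything to the purely arithmetic sum $\sum_l|\lambda_l-\lambda_n|^{-2}$; your evaluation of that sum (extract $(2n+\theta)^{2(2k-2)}$ from the cyclotomic factor and bound $\sum_m 1/(m^2(m+2n+\theta)^2)\le C/n^2$ using $\max(|m|,|m+2n+\theta|)\ge n$) is correct and yields $\|(Q\Gamma_{m,bc}Q)\mathbb{P}_n\|_2\le C\|q\|_2^2(2n+\theta)^{-(2k-2)}n^{-1}$. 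Two small repairs are needed. First, this is not literally (\ref{vgvpnH}) ``with $\alpha(2n+\theta)=O(1/n)$'': the sequence $\alpha$ is fixed by (\ref{alpha}) and is in general only square summable, not $O(1/n)$; however, since $\alpha(2n+\theta)\ge\|q\|_2/(2n+\theta)$, your bound does imply (\ref{vgvpnH}) up to the value of the absolute constant, which is all that is used downstream (your estimate is in fact pointwise sharper for rough $q$). Second, in the Dirichlet case the matrix (\ref{qij}) is Toeplitz-plus-Hankel rather than a pure convolution, so Young's inequality must be applied separately to the $\widetilde{q}_{|s-j|}$ and $\widetilde{q}_{s+j}$ parts (the latter becomes a convolution after reflecting the outer index); this is routine but should be stated. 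Your derivation of (\ref{pnqgq}) by square-summing the column bounds and using $\|\mathbb{P}_nX\|_2\le\|X\|_2$ is if anything simpler than the paper's separate row estimate, and your treatment of $(\Gamma_{m,bc}Q)J_{m,bc}Q$ coincides with the paper's.
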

\begin{proof}
First we consider the case $bc\in\{per, ap\}$ and, for the reader's convenience, divide the proof into several steps.

1) Prove that $Q\Gamma_{bc}Q$ belongs to $\mathfrak{S}_2(\mathcal{H})$.  Using~(\ref{bsj}) and \cite[Lemma~7]{PolyakovAA},
we have
\begin{flalign}\label{qgq}
&\sum_{s, p\in\mathbb{Z}}|(Q\Gamma_{bc}Qe_s, e_p)|^2=\sum_{s, p\in\mathbb{Z}}
\bigg|\sum_{j\in\mathbb{Z}}\frac{q_{s-j}q_{j-p}}{\lambda_j-\lambda_p}\bigg|^2 \nonumber\\
&= \frac{\omega^{4k}}{\pi^{4k}}\sum_{s, p\in\mathbb{Z}}\bigg|\sum_{j\in\mathbb{Z}}\frac{q_{s-j}q_{j-p}}{((2j+\theta)^2-(2p+\theta)^2)
((2j+\theta)^{2k-2}+\dots+(2p+\theta)^{2k-2})}\bigg|^2\nonumber\\
&\leq \frac{\omega^{4k}}{4\pi^{4k}}\sum_{p\in\mathbb{Z}}\frac{1}{(2p+\theta)^{4k-4}}
\sum_{s\in\mathbb{Z}}\bigg|\sum_{j\in\mathbb{Z}}\frac{q_{s-j}q_{j-p}}{(j-p)(j+p+\theta)}\bigg|^2\leq C<\infty.
\end{flalign}
Therefore, the operator $Q\Gamma_{bc}Q$ belongs to
$\mathfrak{S}_2(\mathcal{H})$. Since the operators $Q\Gamma_{m, bc}Q$ differ from $Q\Gamma_{bc}Q$ by some finite rank
operator, $Q\Gamma_{m, bc}Q\in\mathfrak{S}_2(\mathcal{H})$.

2) Now we obtain the estimate (\ref{vgvpnH}). Taking in (\ref{qgq}) $s=n$ and $s=-n-\theta$, we have
\begin{flalign}\label{mainqgq}
\|(Q&\Gamma_{m, bc}Q)\mathbb{P}_n\|_2^2 = \|(Q\Gamma_{m, bc}Q)P_{-n-\theta}+(Q\Gamma_{m, bc}Q)P_n\|_2^2 \nonumber\\
&\leq 2\|(Q\Gamma_{m, bc}Q)P_{-n-\theta}\|_2^2+2\|(Q\Gamma_{m, bc}Q)P_n\|_2^2\leq 4\|(Q\Gamma_{m, bc}Q)P_n\|_2^2.
\end{flalign}
Let us estimate the norm in the right-hand side of (\ref{mainqgq}). Using formula (\ref{bsj}) and H\"older's inequality, we get
\begin{flalign*}
\|&(Q\Gamma_{bc}Q)P_n\|_2^2 = \sum_{s\in\mathbb{Z}}\bigg|\sum_{j\in\mathbb{Z}}\frac{q_{s-j}q_{j-n}}{\lambda_j-\lambda_n}\bigg|^2\\
&= \frac{\omega^{4k}}{\pi^{4k}}\sum_{s\in\mathbb{Z}}\bigg|\sum_{j\in\mathbb{Z}}\frac{q_{s-j}q_{j-n}}{((2j+\theta)^2-(2n+\theta)^2)
((2j+\theta)^{2k-2}+\dots+(2n+\theta)^{2k-2})}\bigg|^2 \\
&\leq \frac{\omega^{4k}}{16\pi^{4k}(2n+\theta)^{4k-4}}\sum_{s\in\mathbb{Z}}\bigg|\sum_{j\in\mathbb{Z}}
\frac{q_{s-j}q_{j-n}}{(j-n)(j+n+\theta)}\bigg|^2 =
\frac{\omega^{4k}}{16\pi^{4k}(2n+\theta)^{4k-4}}\sum_{s\in\mathbb{Z}}
\bigg|\sum_{p\in\mathbb{Z}}\frac{q_{s-n-p}q_p}{p(p+2n+\theta)}\bigg|^2\\
&\leq \frac{\omega^{4k}}{16\pi^{4k}(2n+\theta)^{4k-4}}\sum_{s\in\mathbb{Z}}\bigg(\sum_{p\in\mathbb{Z}}\frac{|q_{s-n-p}|^2}{p^2}\bigg)
\bigg(\sum_{p\in\mathbb{Z}}\frac{|q_p|^2}{(p+2n+\theta)^2}\bigg) \\
&\leq \frac{\omega^{4k}\|q\|_2^2}{48\pi^{4k-2}(2n+\theta)^{4k-4}}\sum_{j\in\mathbb{Z}}\frac{|q_{j-2n-\theta}|^2}{j^2}.
\end{flalign*}
Taking the square summable sequence
\begin{equation}\label{alpha}
\alpha(n)=\bigg(\frac{\|q\|_2^2}{n^2} + \sum\limits_{|p|\leq n, \; p\ne 0}\frac{|q_{p-n}|^2}{p^2}\bigg)^\frac{1}{2},
\quad n\in\mathbb{N},
\end{equation}
we then have
\begin{flalign*}
\|&(Q\Gamma_{bc}Q)P_n\|_2^2\leq \frac{\omega^{4k}\|q\|_2^2}{48\pi^{4k-2}(2n+\theta)^{4k-4}}
\bigg(\sum\limits_{\substack{|j|\leq 2n \\ j\ne 0}}\frac{|q_{j-2n-\theta}|^2}{j^2} +
\sum_{|j|\geq 2n+1}\frac{|q_{j-2n-\theta}|^2}{j^2}\bigg)\\
&\leq \frac{\omega^{4k}\|q\|_2^2}{48\pi^{4k-2}(2n+\theta)^{4k-4}}\bigg(\sum\limits_{\substack{|j|\leq 2n \\ j\ne 0}}
\frac{|q_{j-2n-\theta}|^2}{j^2} +
\frac{\|q\|_2^2}{(2n+\theta)^2}\bigg)=\frac{\omega^{4k}\|q\|_2^2}{48\pi^{4k-2}(2n+\theta)^{4k-4}}\alpha^2(2n+\theta).
\end{flalign*}

From (\ref{gmbc}) it follows that the same estimates hold for the operator $(Q\Gamma_{m, bc}Q)P_n$.
Combining this and (\ref{mainqgq}), we obtain (\ref{vgvpnH}).

3) Next let us estimate $\|P_n(Q\Gamma_{bc}Q)\|_2$, $n\in\mathbb{Z}$.
We have
\begin{flalign*}
\|P_n(Q\Gamma_{bc}Q)\|_2^2 &=\sum_{p\in\mathbb{Z}}\bigg|\sum_{j\in\mathbb{Z}}\frac{q_{n-j}q_{j-p}}{\lambda_j-\lambda_p}\bigg|^2
\leq \frac{\omega^{4k}}{16\pi^{4k}}\sum_{p\in\mathbb{Z}}\frac{1}{(2p+\theta)^{4k-4}}\bigg|\sum_{j\in\mathbb{Z}}
\frac{q_{n-j}q_{j-p}}{(j-p)(j+p+\theta)}\bigg|^2\\
&= \frac{\omega^{4k}}{16\pi^{4k}}\sum_{p\in\mathbb{Z}}
\frac{1}{(2p+\theta)^{4k-4}}\bigg|\sum_{s\in\mathbb{Z}}\frac{q_{n-s-p}q_s}{s(s+2p+\theta)}\bigg|^2\\
&\leq \frac{\omega^{4k}}{16\pi^{4k}}\sum_{p\in\mathbb{Z}}\frac{1}{(2p+\theta)^{4k-4}}
\bigg(\sum_{s\in\mathbb{Z}}\frac{|q_{n-s-p}|^2}{s^2}\bigg)\bigg(\sum_{s\in\mathbb{Z}}\frac{|q_s|^2}{(s+2p+\theta)^2}\bigg)\\
&\leq \frac{\omega^{4k}}{16\pi^{4k}}\sum_{p\in\mathbb{Z}}\frac{1}{(2p+\theta)^{4k-4}}
\bigg(\sum_{s\in\mathbb{Z}}\frac{|q_{n-s-p}|^2}{s^2}\bigg)\bigg(\sum_{j\in\mathbb{Z}}\frac{|q_{j-2p-\theta}|^2}{j^2}\bigg)\\
&\leq \frac{\omega^{4k}\|q\|_2^4}{144\pi^{4k-4}}\sum_{p\in\mathbb{Z}}\frac{1}{(2p+\theta)^{4k-4}}\leq C.
\end{flalign*}

The same estimates hold for the operator $P_n(Q\Gamma_{m, bc}Q)$. Using this and (\ref{mainqgq}) with
$P_n(Q\Gamma_{m, bc}Q)$ instead of $(Q\Gamma_{m, bc}Q)P_n$, we get (\ref{pnqgq}).

Let now $bc=dir$.
In this case the proof is similar to the previous one and we omit it. Note only that to check the estimates (\ref{qgdirqpn}) and
(\ref{pnggdirq}) one should take, instead of $\alpha(n)$, the square summable sequence
\begin{equation}\label{beta}
\beta(n)=\bigg(\frac{\|q\|_2^2}{n^2} + \sum\limits_{|p|\leq n, p\ne 0}
\frac{\widetilde{q}(p, n)}{p^2}\bigg)^\frac{1}{2}, \quad n\in\mathbb{N},
\end{equation}
where $\widetilde{q}(p, n)=\max\{|\widetilde{q}_{|p+n|}|^2, |\widetilde{q}_{|p-n|}|^2\}$, $p\in\mathbb{Z}, n\in\mathbb{N}$,
and $\widetilde{q}_s$, $s\in\mathbb{N}$, are Fourier coefficients of potential~$q$.

It remains to show that, for $m\in\mathbb{J}$ and $bc\in\{per, ap, dir\}$,  the operators $(\Gamma_{m, bc}Q)J_{m, bc}Q$
belong to $\mathfrak{S}_2(\mathcal{H})$. Evidently, $J_{m, bc}Q$, $m\in\mathbb{J}$, are bounded. From Lemma~\ref{lhgv-gsH}
it follows that the operators $\Gamma_{m, bc}Q$, $m\in\mathbb{J}$, belong to $\mathfrak{S}_2(\mathcal{H})$. Therefore,
$(\Gamma_{m, bc}Q)J_{m, bc}Q$, $m\in\mathbb{J}$, are Hilbert-Schmidt operators, which completes the proof.
\end{proof}

The following lemma contains all we need for the preliminary similarity transformation in our situation.
\begin{lemma}\label{lhpred1}
There exists $m\in\mathbb{J}$ such that the operators $Q$, $J_{m, bc}Q$, and $\Gamma_{m, bc}Q$ with $bc\in\{per, ap, dir\}$
satisfy the following conditions:\\
(a) $\Gamma_{m, bc}Q\in\mathrm{End}\,\mathcal{H}$ and $\|\Gamma_{m, bc}Q\|_2<1$;\\
(b) $(\Gamma_{m, bc}Q)D(L_{bc}^0)\subset D(L_{bc}^0)$;\\
(c) $Q\Gamma_{m, bc}Q$, $(\Gamma_{m, bc}Q)J_{m, bc}Q\in\mathfrak{S}_2(\mathcal{H})$;\\
(d) $L_{bc}^0(\Gamma_{m, bc}Q)x-(\Gamma_{m, bc}Q)L_{bc}^0x=(Q-J_{m, bc}Q)x$, $x\in D(L_{bc}^0)$;\\
(e) for every $\varepsilon>0$ there exists $\lambda_\varepsilon\in\rho(L_{bc}^0)$ with
$\|Q(L_{bc}^0-\lambda_\varepsilon I)^{-1}\|<\varepsilon$.
\end{lemma}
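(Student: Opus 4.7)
The plan is to verify the five conditions (a)--(e) in turn, using Lemmas~\ref{lhgv-gsH} and~\ref{lhvgv-gsH} as the main ingredients.

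For (a), Lemma~\ref{lhgv-gsH} shows $\Gamma_{m,bc}Q\in\mathfrak{S}_2(\mathcal{H})\subset\mathrm{End}\,\mathcal{H}$ and, in its proof, that $\|\Gamma_{m,bc}Q\|_2\to 0$ as $m\to\infty$; one fixes $m$ large enough that $\|\Gamma_{m,bc}Q\|_2<1$. For (c), the Hilbert--Schmidt property of $Q\Gamma_{m,bc}Q$ is exactly Lemma~\ref{lhvgv-gsH}, while $(\Gamma_{m,bc}Q)J_{m,bc}Q$ is a composition of a Hilbert--Schmidt operator with $J_{m,bc}Q$, which is bounded (its matrix in the eigenbasis consists of a finite block together with the operator-norm-bounded diagonal blocks $\mathbb{P}_nQ\mathbb{P}_n$ of the multiplication operator~$Q$).

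For the commutator identity (d), I would expand both sides in the eigenbasis of $L_{bc}^0$. For $bc\in\{per,ap\}$, $x\in D(L_{bc}^0)$, the definitions (\ref{jmbc}) and (\ref{gmbc}) identify the index set
\[
\mathcal{I}_m=\{(s,j)\in\mathbb{Z}_+^{2}:\,s\neq j\}\setminus\{(s,j):\,s\leq m\text{ and }j\leq m\}
\]
of pairs surviving in $\Gamma_{m,bc}Q$; then
\[
L_{bc}^0(\Gamma_{m,bc}Q)x-(\Gamma_{m,bc}Q)L_{bc}^0 x
=\sum_{(s,j)\in\mathcal{I}_m}\frac{\lambda_s-\lambda_j}{\lambda_s-\lambda_j}\,\mathbb{P}_s Q\mathbb{P}_j x
=\sum_{(s,j)\in\mathcal{I}_m}\mathbb{P}_s Q\mathbb{P}_j x,
\]
which equals $(Q-J_{m,bc}Q)x$ because $J_{m,bc}Q$ absorbs precisely the complementary block, namely the full block $\mathbb{P}_{(m)}Q\mathbb{P}_{(m)}$ plus the diagonal $\sum_{n>m}\mathbb{P}_n Q\mathbb{P}_n$; the Dirichlet case is identical after replacing $\mathbb{P}_n$ by $P_{n,dir}$. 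Property (b) then falls out of (d): its right-hand side is in $\mathcal{H}$ since $Q\in\mathfrak{L}_{L_{bc}^0}(\mathcal{H})$ and $\Gamma_{m,bc}Q$ is bounded, so $L_{bc}^0(\Gamma_{m,bc}Q)x\in\mathcal{H}$, and closedness of $L_{bc}^0$ forces $(\Gamma_{m,bc}Q)x\in D(L_{bc}^0)$.

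The main obstacle is (e). My plan is to show that $Q$ is $L_{bc}^0$-compact, via the factorisation
\[
L_2[0,\omega]\xrightarrow{\,(L_{bc}^0-\mu I)^{-1}\,}H^{2k}[0,\omega]\hookrightarrow L_\infty[0,\omega]\xrightarrow{\,q\,\cdot\,}L_2[0,\omega],
\]
valid for any $\mu\in\rho(L_{bc}^0)$; the middle embedding is compact on the bounded interval $[0,\omega]$ for $k\geq 1$, and the final multiplication map is bounded since $q\in L_2$. Writing
\[
Q(L_{bc}^0-\lambda I)^{-1}=Q(L_{bc}^0-\mu I)^{-1}\bigl[(L_{bc}^0-\mu I)(L_{bc}^0-\lambda I)^{-1}\bigr]
\]
and letting $\lambda=-t$ with $t\to+\infty$, the factor in brackets tends to $0$ strongly (check this on each eigenvector $g_n$: $(\lambda_n-\mu)/(\lambda_n+t)\to 0$) while its operator norm stays uniformly bounded. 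Since a compact operator composed with a uniformly bounded, strongly null family converges to zero in operator norm, for every $\varepsilon>0$ one can select $t$ large and set $\lambda_\varepsilon=-t$ to secure $\|Q(L_{bc}^0-\lambda_\varepsilon I)^{-1}\|<\varepsilon$.
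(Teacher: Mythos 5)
Your proposal is correct in substance, but it is worth noting that the paper itself proves almost nothing here: it disposes of (a) and (c) by citing Lemmas~\ref{lhgv-gsH} and~\ref{lhvgv-gsH} (exactly as you do, including the observation that $(\Gamma_{m,bc}Q)J_{m,bc}Q$ is the product of a Hilbert--Schmidt operator with the bounded block-diagonal operator $J_{m,bc}Q$), and it delegates (b), (d), (e) to the analogous lemma in the Hill-operator paper of Baskakov and Polyakov. You instead supply self-contained arguments. For (d) and (b) your route is the natural one and matches what the cited reference does: verify the commutator identity on finite block partial sums, pass to the limit, and invoke closedness of $L_{bc}^0$ to get $(\Gamma_{m,bc}Q)x\in D(L_{bc}^0)$; stated in that order there is no circularity. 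For (e) your argument via relative compactness of $Q$ (the factorisation through the compact embedding $H^{2k}[0,\omega]\hookrightarrow L_\infty[0,\omega]$) is genuinely different from the reference, which estimates the Hilbert--Schmidt norm $\|Q(L_{bc}^0-\lambda I)^{-1}\|_2^2=\sum_n\|Qg_n\|^2|\lambda_n-\lambda|^{-2}$ directly and lets $\lambda\to-\infty$; your version is more abstract but buys the same conclusion.

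One point in (e) needs repair as stated. The principle you invoke, that a compact operator composed with a uniformly bounded strongly null family tends to zero in norm, is true for the composition $T_\lambda K$ (strong convergence is uniform on the precompact image of the unit ball under $K$) but false in general for $KT_\lambda$, which is the order appearing in your factorisation $Q(L_{bc}^0-\mu I)^{-1}\cdot\bigl[(L_{bc}^0-\mu I)(L_{bc}^0-\lambda I)^{-1}\bigr]$. The gap is harmless here: with $\lambda=-t$ and $\mu$ real the bracketed factor $T_\lambda$ is self-adjoint (it is a real function of the self-adjoint operator $L_{bc}^0$), so $\|KT_\lambda\|=\|T_\lambda K^*\|\to 0$ by applying the correct version of the principle to the compact operator $K^*$. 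You should either insert this adjoint step or note that $T_\lambda^*\to 0$ strongly; without it the quoted lemma does not apply in the order you use it.
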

\begin{proof}
Properties $(a)$ and $(c)$ follow from~Lemmas~\ref{lhgv-gsH} and \ref{lhvgv-gsH}.
The proof of properties $(b)$, $(d)$, and $(e)$ is similar to \cite[Lemma~7]{Bask_Pol_2017}.
\end{proof}

Lemma~\ref{lhpred1} and \cite[Theorem~9]{Bask_Pol_2017} yield the first theorem on similarity.
\begin{theorem}\label{thsimilar-3H}
Let $l\in\mathbb{J}$ be such that $\|\Gamma_{l, bc}Q\|_2\leq 1/2$. Then the operator $L_{bc}^0-Q$ is similar to
the operator $L_{bc}^0-Q_0$, where $Q_0$ is defined by
\begin{equation}\label{v0thH}
Q_0=J_{l, bc}Q+(I+\Gamma_{l, bc}Q)^{-1}(Q\Gamma_{l, bc}Q-(\Gamma_{l, bc}Q)J_{l, bc}Q)=:J_{l, bc}Q + B.
\end{equation}
The operator $B$ is Hilbert-Schmidt one and the following representation holds
\[
(L_{bc}^0-Q)(I+\Gamma_{l, bc}Q)=(I+\Gamma_{l, bc}Q)(L_{bc}^0-Q_0).
\]
\end{theorem}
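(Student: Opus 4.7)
The plan is to read this as a direct application of \cite[Theorem~9]{Bask_Pol_2017} (the general statement on the preliminary similarity transformation in the method of similar operators) once Lemma~\ref{lhpred1} is in hand. Concretely, the five conditions $(a)$--$(e)$ of Lemma~\ref{lhpred1} supply exactly the hypotheses needed there: $(a)$ gives the invertibility of $I+\Gamma_{l,bc}Q$ in $\mathrm{End}\,\mathcal{H}$ via the Neumann series (so in particular $(I+\Gamma_{l,bc}Q)^{-1}$ exists and $Q_0$ is a well-defined operator on $\mathcal{H}$), $(b)$ guarantees that $I+\Gamma_{l,bc}Q$ maps $D(L_{bc}^0)$ bijectively onto itself, $(c)$ guarantees that the two summands in the parenthesis defining $Q_0$ are Hilbert--Schmidt, and $(d)$ is precisely the commutator identity on $D(L_{bc}^0)$ that drives the intertwining.

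First I would verify the intertwining formula by a direct computation on $D(L_{bc}^0)$. Writing $\Gamma:=\Gamma_{l,bc}Q$ and $J:=J_{l,bc}Q$ for brevity, condition $(d)$ reads $L_{bc}^0\Gamma x-\Gamma L_{bc}^0 x=(Q-J)x$. Therefore
\[
(L_{bc}^0-Q)(I+\Gamma)x=(I+\Gamma)L_{bc}^0x+(Q-J)x-Qx-Q\Gamma x=(I+\Gamma)L_{bc}^0x-J x-Q\Gamma x.
\]
On the other hand, the very definition of $Q_0$ in~(\ref{v0thH}) gives $(I+\Gamma)Q_0=(I+\Gamma)J+Q\Gamma-\Gamma J=J+Q\Gamma$, so subtracting yields $(I+\Gamma)(L_{bc}^0-Q_0)x=(I+\Gamma)L_{bc}^0x-Jx-Q\Gamma x$ on $D(L_{bc}^0)$. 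The two expressions coincide, proving the factorization $(L_{bc}^0-Q)(I+\Gamma)=(I+\Gamma)(L_{bc}^0-Q_0)$ and hence the similarity of $L_{bc}^0-Q$ to $L_{bc}^0-Q_0$ with $U=I+\Gamma_{l,bc}Q$ as the similarity transformation.

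Next I would verify the Hilbert--Schmidt property of $B=(I+\Gamma)^{-1}(Q\Gamma-\Gamma J)$. By Lemma~\ref{lhgv-gsH}, $\Gamma_{l,bc}Q\in\mathfrak{S}_2(\mathcal{H})$; by Lemma~\ref{lhvgv-gsH}, $Q\Gamma_{l,bc}Q\in\mathfrak{S}_2(\mathcal{H})$ and $(\Gamma_{l,bc}Q)J_{l,bc}Q\in\mathfrak{S}_2(\mathcal{H})$. Since $\mathfrak{S}_2(\mathcal{H})$ is an ideal in $\mathrm{End}\,\mathcal{H}$ and $(I+\Gamma_{l,bc}Q)^{-1}\in\mathrm{End}\,\mathcal{H}$ by step $(a)$, the product $B$ lies in $\mathfrak{S}_2(\mathcal{H})$.

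The only genuinely delicate point is the bookkeeping with unbounded operators: one must check that both sides of the intertwining identity are defined on the same set, which boils down to $(I+\Gamma)D(L_{bc}^0)=D(L_{bc}^0)$ (from $(b)$ and the Neumann series for $(I+\Gamma)^{-1}$, whose partial sums preserve $D(L_{bc}^0)$ and converge in $\mathrm{End}\,\mathcal{H}$). This is standard and is the content of \cite[Theorem~9]{Bask_Pol_2017}; I would simply invoke that theorem for the formal conclusion once $(a)$--$(e)$ of Lemma~\ref{lhpred1} have been confirmed.
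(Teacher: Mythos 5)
Your proposal is correct and follows the paper's own route: the paper proves this theorem in one line by combining Lemma~\ref{lhpred1} with \cite[Theorem~9]{Bask_Pol_2017}, which is exactly what you do, and your explicit verification of the intertwining identity from condition $(d)$ and of $B\in\mathfrak{S}_2(\mathcal{H})$ via the ideal property simply fills in the details the paper delegates to the cited reference.
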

Note that the operator $J_{l, bc}Q$ does not belong to $\mathfrak{S}_2(\mathcal{H})$. In this relation take
$\widetilde{L}_{bc}^0=L_{bc}^0-J_{l, bc}Q$ as the unperturbed operator we need. Obviously, this operator is a normal one.
From Theorem~\ref{thsimilar-3H} it then follows that the operator $L_{bc}$ is similar to $\widetilde{L}_{bc}^0-B$, where
$B\in\mathfrak{S}_2(\mathcal{H})$. This provide the possibility to apply the scheme of method of similar operators from
Section~\ref{sec2} to this operator, which leads to the second theorem on similarity.

\begin{theorem}\label{thsimilar4H}
There exist numbers $l\in\mathbb{J}$ and $m\geq l+1$, such that the conditions (\ref{9-estimate}) or (\ref{10-estimate}) hold and, consequently, the operator $L_{bc}$, $bc\in\{per, ap, dir\}$, is similar to the operator $\widetilde{L}_{bc}^0-J_{m, bc}X_*$, where
$X_*$ is a solution of nonlinear equation
\begin{equation}\label{eq521H}
X=B\Gamma_{m, bc}X-(\Gamma_{m, bc}X)(J_{m, bc}B)-(\Gamma_{m, bc}X)J_{m, bc}(B\Gamma_{m, bc}X)+B.
\end{equation}
In this equation the operator $B$ is defined in (\ref{v0thH}) and $l$ satisfies Theorem~\ref{thsimilar-3H}.
\end{theorem}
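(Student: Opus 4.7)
The theorem combines Theorem~\ref{thsimilar-3H} with the abstract result Theorem~\ref{thsimilar-2}, applied to the Hilbert--Schmidt perturbation produced in the first step.

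By Lemma~\ref{lhgv-gsH}, choose $l \in \mathbb{J}$ large enough that $\|\Gamma_{l,bc}Q\|_2 \leq 1/2$. Theorem~\ref{thsimilar-3H} then supplies the preliminary similarity $(L_{bc}^0 - Q)(I + \Gamma_{l,bc}Q) = (I + \Gamma_{l,bc}Q)(\widetilde{L}_{bc}^0 - B)$, where $\widetilde{L}_{bc}^0 = L_{bc}^0 - J_{l,bc}Q$ is a normal operator with the same block structure as $L_{bc}^0$ (since $J_{l,bc}Q$ is block-diagonal with respect to $\{\mathbb{P}_n\}$), and $B \in \mathfrak{S}_2(\mathcal{H})$ is given by (\ref{v0thH}). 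In particular, $\|B\|_2$ is a fixed finite quantity independent of $m$.

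Next, take $m \geq l + 1$ to be chosen below. The admissible triple $(\mathfrak{S}_2(\mathcal{H}), J_{m,bc}, \Gamma_{m,bc})$ of Lemma~\ref{lhtriple} is reinterpreted as an admissible triple for $\widetilde{L}_{bc}^0$ using the identities $J_{m,bc}(J_{l,bc}Q) = J_{l,bc}Q$ and $\Gamma_{m,bc}(J_{l,bc}Q) = 0$ (both of which follow from $m \geq l$ and the block-diagonality of $J_{l,bc}Q$ with respect to the $\mathbb{P}_n$'s). The norm estimates from Lemma~\ref{lhcorrect-1} are unaffected, and since $\|B\|_2$ does not depend on $m$ while the denominators in (\ref{9-estimate}) and (\ref{10-estimate}) grow at least like $m^{2k-1}$ with $k > 1$, the relevant smallness condition holds for all sufficiently large $m$. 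Fix such an $m \geq l + 1$.

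Theorem~\ref{thsimilar-2} applied to $\widetilde{L}_{bc}^0 - B$ then yields a solution $X_* \in \mathfrak{S}_2(\mathcal{H})$ of the fixed-point equation (\ref{eq521H}), obtainable by simple iteration from $X_0 = 0$, $X_1 = B$, together with the similarity $\widetilde{L}_{bc}^0 - B = (I + \Gamma_{m,bc}X_*)(\widetilde{L}_{bc}^0 - J_{m,bc}X_*)(I + \Gamma_{m,bc}X_*)^{-1}$. Composing the two similarity transformations, one concludes that the invertible operator $(I + \Gamma_{l,bc}Q)(I + \Gamma_{m,bc}X_*)$ implements a similarity from $L_{bc}$ to $\widetilde{L}_{bc}^0 - J_{m,bc}X_*$, which is the required conclusion.

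The principal technical point is the passage from the admissible triple for $L_{bc}^0$ to one for $\widetilde{L}_{bc}^0$: the commutator identity of Definition~\ref{def2}(3) now involves $\widetilde{L}_{bc}^0$ rather than $L_{bc}^0$, so one must verify that the correction $-J_{l,bc}Q$ does not spoil the identity for elements of $\mathfrak{S}_2(\mathcal{H})$; the vanishing $\Gamma_{m,bc}(J_{l,bc}Q) = 0$ together with the fact that $J_{l,bc}Q$ commutes with every spectral projection $\mathbb{P}_n$ carries through this verification, with the remaining items of Definition~\ref{def2} reducing to a computation that parallels the proof of Lemma~\ref{lhtriple}.
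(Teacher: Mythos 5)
Your proposal follows exactly the route the paper takes: its entire proof of Theorem~\ref{thsimilar4H} is the one line ``Follows from Theorems~\ref{thsimilar-2} and \ref{thsimilar-3H}'', i.e.\ first the preliminary transformation of Theorem~\ref{thsimilar-3H}, then the abstract similarity theorem applied to $\widetilde{L}_{bc}^0-B$ with $m$ chosen large enough that (\ref{9-estimate}) or (\ref{10-estimate}) holds (possible since $\|B\|_2$ is fixed once $l$ is fixed), and composition of the two similarity transformations. So in substance you are doing the same thing, with more detail than the paper supplies.

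One caveat about the extra detail you add on the ``principal technical point''. The operator $J_{l,bc}Q=\mathbb{P}_{(l)}Q\mathbb{P}_{(l)}+\sum_{n\geq l+1}\mathbb{P}_nQ\mathbb{P}_n$ commutes with $\mathbb{P}_{(l)}$, with $\mathbb{P}_{(m)}$ for $m\geq l$, and with each $\mathbb{P}_n$ for $n\geq l+1$, but \emph{not} with every individual $\mathbb{P}_n$ (for $n\leq l$ one has $\mathbb{P}_n(J_{l,bc}Q)=\mathbb{P}_nQ\mathbb{P}_{(l)}\neq\mathbb{P}_{(l)}Q\mathbb{P}_n=(J_{l,bc}Q)\mathbb{P}_n$ in general). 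More substantively, the identity $\Gamma_{m,bc}(J_{l,bc}Q)=0$ (which is correct for $m\geq l$) does not by itself restore condition 3) of Definition~\ref{def2} for the new unperturbed operator: for a general $X\in\mathfrak{S}_2(\mathcal{H})$ one gets
\[
\widetilde{L}_{bc}^0(\Gamma_{m,bc}X)x-(\Gamma_{m,bc}X)\widetilde{L}_{bc}^0x=(X-J_{m,bc}X)x-[J_{l,bc}Q,\Gamma_{m,bc}X]x,
\]
and the commutator $[J_{l,bc}Q,\Gamma_{m,bc}X]$ (a product of a block-diagonal bounded operator with a block-off-diagonal Hilbert--Schmidt one) does not vanish in general. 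Making the passage from the triple for $L_{bc}^0$ to one for $\widetilde{L}_{bc}^0$ rigorous therefore requires either modifying $\Gamma$ to be adapted to the actual spectral data of $\widetilde{L}_{bc}^0$, or absorbing these commutator terms into the fixed-point equation; the paper leaves this point entirely implicit, so your write-up is no less complete than the original, but the specific justification you offer for it is not yet airtight.
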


\begin{proof}
Follows from Theorems~\ref{thsimilar-2} and \ref{thsimilar-3H}.
\end{proof}

We end this section with some additional properties of the operator $B$ that will be useful for our further consideration
in the next section.

From (\ref{v0thH}) it follows that
\begin{flalign}\label{newB}
B &=\big(\sum_{j=0}^\infty(-1)^j(\Gamma_{l, bc}Q)^j\big)(Q\Gamma_{l, bc}Q-(\Gamma_{l, bc}Q)J_{l, bc}Q) =
Q\Gamma_{l, bc}Q \nonumber\\
&- (\Gamma_{l, bc}Q)J_{l, bc}Q - (\Gamma_{l, bc}Q)(I+\Gamma_{l, bc}Q)^{-1}
(Q\Gamma_{l, bc}Q - (\Gamma_{l, bc}Q)J_{l, bc}Q).
\end{flalign}

Let $bc\in\{per, ap\}$. Applying the operator $\mathbb{P}_n$ to (\ref{newB})
on the right side and on the left side, and using the equality
$(J_{l, bc}Q)\mathbb{P}_n=\mathbb{P}_n(J_{l, bc}Q)=\mathbb{P}_n(J_{l, bc}Q)\mathbb{P}_n$, we get
\begin{flalign}
&B\mathbb{P}_n - \mathbb{P}_nB\mathbb{P}_n=(Q\Gamma_{l, bc}Q)\mathbb{P}_n - \mathbb{P}_n(Q\Gamma_{l, bc}Q)\mathbb{P}_n-
(\Gamma_{l, bc}Q)\mathbb{P}_nQ\mathbb{P}_n \nonumber\\
&+ (\mathbb{P}_n(\Gamma_{l, bc}Q) - \Gamma_{l, bc}Q)(I+\Gamma_{l, bc}Q)^{-1}
((Q\Gamma_{l, bc}Q)\mathbb{P}_n-(\Gamma_{l, bc}Q)\mathbb{P}_nQ\mathbb{P}_n)\label{statbpn}
\end{flalign}
and
\begin{flalign}
&\mathbb{P}_nB - \mathbb{P}_nB\mathbb{P}_n=\mathbb{P}_n(Q\Gamma_{l, bc}Q)-\mathbb{P}_n(Q\Gamma_{l, bc}Q)\mathbb{P}_n\nonumber\\
&+ \mathbb{P}_n(\Gamma_{l, bc}Q)(I+\Gamma_{l, bc}Q)^{-1}(-Q\Gamma_{l, bc}Q + (\Gamma_{l, bc}Q)\mathbb{P}_nQ\mathbb{P}_n +
(Q\Gamma_{l, bc}Q)\mathbb{P}_n-(\Gamma_{l, bc}Q)\mathbb{P}_nQ\mathbb{P}_n)\label{statpnb}.
\end{flalign}
Then the inequality $\|\Gamma_{l, bc}Q\|_2\leq 1/2$ implies that
\begin{flalign}\label{bpn-pnbpnH}
&\|B\mathbb{P}_n - \mathbb{P}_nB\mathbb{P}_n\|_2\leq \|(Q\Gamma_{l, bc}Q)\mathbb{P}_n\|_2 +
\|\mathbb{P}_n(Q\Gamma_{l, bc}Q)\mathbb{P}_n\|_2 \nonumber\\
&+ \|(\Gamma_{l, bc}Q)\mathbb{P}_n\|_2\|\mathbb{P}_nQ\mathbb{P}_n\|_2 + \frac{\|\mathbb{P}_n(\Gamma_{l, bc}Q)\|_2 +
\|\Gamma_{l, bc}Q\|_2}{1-\|\Gamma_{l, bc}Q\|_2}\big(\|(Q\Gamma_{l, bc}Q)\mathbb{P}_n\|_2 \nonumber\\
&+ \|(\Gamma_{l, bc}Q)\mathbb{P}_n\|_2\|\mathbb{P}_nQ\mathbb{P}_n\|_2\big)\leq
2\|(Q\Gamma_{l, bc}Q)\mathbb{P}_n\|_2 + \|(\Gamma_{l, bc}Q)\mathbb{P}_n\|_2\|\mathbb{P}_nQ\mathbb{P}_n\|_2 \nonumber\\
&+ (2\|\mathbb{P}_n(\Gamma_{l, bc}Q)\|_2 + 1)(\|(Q\Gamma_{l, bc}Q)\mathbb{P}_n\|_2 +
\|(\Gamma_{l, bc}Q)\mathbb{P}_n\|_2\|\mathbb{P}_nQ\mathbb{P}_n\|_2)
\end{flalign}
and
\begin{flalign}\label{pnb-pnbpnH}
&\|\mathbb{P}_nB - \mathbb{P}_nB\mathbb{P}_n\|_2\leq \|\mathbb{P}_n(Q\Gamma_{l, bc}Q)\|_2 +
\|\mathbb{P}_n(Q\Gamma_{l, bc}Q)\mathbb{P}_n\|_2 \nonumber\\
&+ \frac{\|\mathbb{P}_n(\Gamma_{l, bc}Q)\|_2}{1-\|\Gamma_{l, bc}Q\|_2}\big(\|Q\Gamma_{l, bc}Q\|_2+
\|(Q\Gamma_{l, bc}Q)\mathbb{P}_n\|_2 + 2\|(\Gamma_{l, bc}Q)\mathbb{P}_n\|_2\|\mathbb{P}_nQ\mathbb{P}_n\|_2\big)
\leq 2\|\mathbb{P}_n(Q\Gamma_{l, bc}Q)\|_2 \nonumber\\
&+ 2\|\mathbb{P}_n(\Gamma_{l, bc}Q)\|_2(\|Q\Gamma_{l, bc}Q\|_2 +
\|(Q\Gamma_{l, bc}Q)\mathbb{P}_n\|_2 + 2\|(\Gamma_{l, bc}Q)\mathbb{P}_n\|_2\|\mathbb{P}_nQ\mathbb{P}_n\|_2).
\end{flalign}

In case $bc=dir$, by similar arguments, we obtain (\ref{statbpn})~--~(\ref{pnb-pnbpnH}) with the projections $P_{n, dir}$
instead of~$\mathbb{P}_n$.

By using (\ref{pngvH}), (\ref{vgvpnH}), (\ref{pnqgq}) and (\ref{qgq}) in (\ref{bpn-pnbpnH}) and (\ref{pnb-pnbpnH})
for $bc\in\{per, ap\}$ and (\ref{pndirgqH}), (\ref{qgdirqpn}), and (\ref{pnggdirq}) for $bc=dir$, we have
\begin{flalign}
&\|B\mathbb{P}_n-\mathbb{P}_nB\mathbb{P}_n\|_2\leq \frac{C}{(2n+\theta)^{2k-2}}\alpha(2n+\theta), \quad
\|\mathbb{P}_nB - \mathbb{P}_nB\mathbb{P}_n\|_2\leq C, \label{bpn}\\
&\|BP_{n, dir}-P_{n, dir}BP_{n, dir}\|_2\leq C\beta(2n)/n^{2k-2}, \label{bpndir}\\
&\|P_{n, dir}B - P_{n, dir}BP_{n, dir}\|_2\leq C \label{pndirb},
\end{flalign}
where the sequences $\alpha$ and $\beta$ are defined in (\ref{alpha}) and (\ref{beta}), respectively.

\section{Proofs of the main results}\label{sec4}

In this section we prove our main results.

\textit{Proof of Theorem~\ref{thasympt0and1H}.} Using Theorem~\ref{thsimilar-3H}, we transform the operator $L_{bc}$,
$bc\in\{per, ap\}$, to the operator $\widetilde{L}_{bc}^0-B$, where $B$ is defined by (\ref{v0thH}) and has the form
(\ref{newB}). Since $B$ belongs to $\mathfrak{S}_2(\mathcal{H})$, we can apply the results from Section~\ref{sec2} to the
operator $\widetilde{L}_{bc}^0-B$. By Theorem~\ref{thsimilar4H}, there exists $m\in\mathbb{Z}_+$, $m\geq l+1$, such that
the operator $L_{bc}$ is similar to the operator $\widetilde{L}_{bc}^0-J_{m, bc}X_*$, where $X_*$ is a solution of the equation
(\ref{eq521H}). Since the operator $J_{m, bc}X_*$ is bounded and $\widetilde{L}_{bc}^0$ is a normal operator with discrete spectrum,
$\widetilde{L}_{bc}^0-J_{m, bc}X_*$ and, consequently, $L_{bc}$ are operators with discrete spectrum.
Moreover,
\begin{flalign}\label{sigmamsigmanH}
\sigma(L_{bc})=\sigma(\widetilde{L}_{bc}^0 - J_{m, bc}X_*) &= \sigma(A_{(m)})\cup \Big(\bigcup_{n\geq m+1}
\Big(\lambda_n-q_0-\sigma(A_n)\Big)\Big)\nonumber\\
&= \sigma_{(m)}\cup \Big(\bigcup_{n\geq m+1}\Big(\lambda_n-q_0-\sigma_n\Big)\Big),
\end{flalign}
where $A_n$ is the restriction of the operator $\mathbb{P}_nX_*\mathbb{P}_n$ to the subspace $\mathrm{Im}\,\mathbb{P}_n$ and $A_{(m)}$
is the restriction of operator $\widetilde{L}_{bc}^0-\mathbb{P}_{(m)}X_*\mathbb{P}_{(m)}$ to the subspace
$\mathrm{Im}\,\mathbb{P}_{(m)}$. Note that the sets $\sigma_{(m)}$ and $\sigma_n$ are mutually disjoint and
$\lambda_n=\pi^{2k}(2n+\theta)^{2k}/\omega^{2k}$.

To obtain the asymptotic formulas for the eigenvalues of the operator $L_{bc}$, we now describe the sets $\sigma_n$, $n\geq m+1$.
The operators $A_n$, $n\geq m+1$, are represented as
\[
A_n=B_n+C_n + D_n, \quad n\geq m+1,
\]
where $B_n$, $C_n$ and $D_n$ are the restrictions of the operators $\mathbb{P}_n(J_{m, bc}Q)\mathbb{P}_n=\mathbb{P}_nQ\mathbb{P}_n$,
$\mathbb{P}_n(B-J_{m, bc}Q)\mathbb{P}_n$ and $\mathbb{P}_n(X_*-B)\mathbb{P}_n$ to the subspace $\mathrm{Im}\,\mathbb{P}_n$,
respectively. Then we have
\begin{equation}\label{bn0+cn0H}
\mathcal{A}_n=\mathcal{B}_n + \mathcal{C}_n +\mathcal{D}_n,
\end{equation}
where
\[
\mathcal{B}_n=
\begin{pmatrix}
0 & q_{-2n-\theta} \\
q_{2n+\theta} & 0
\end{pmatrix}, \quad
\mathcal{C}_n=
\begin{pmatrix}
(Q\Gamma_{m, bc}Qe_{-n-\theta}, e_{-n-\theta}) & (Q\Gamma_{m, bc}Qe_n, e_{-n-\theta}) \\
(Q\Gamma_{m, bc}Qe_{-n-\theta}, e_n) & (Q\Gamma_{m, bc}Qe_n, e_n)
\end{pmatrix}
\]
and $\mathcal{D}_n$ is the matrix of the operator $D_n$. By $c_{ij}$, $i, j=1, 2$, we denote the elements of matrix
$\mathcal{C}_n$ and note that
\begin{flalign}
&c_{11}=c_{22}=\frac{\omega^{2k}}{\pi^{2k}}\sum\limits_{\substack{j\ne n \\ j\ne -n-\theta}}
\frac{q_{-n-j-\theta}q_{n+j+\theta}}{(2j+\theta)^{2k}-(2n+\theta)^{2k}}, \label{c11}\\
&c_{12}=(Q\Gamma_{m, bc}Qe_n, e_{-n-\theta})=\frac{\omega^{2k}}{\pi^{2k}}\sum\limits_{\substack{j\ne n \\ j\ne -n-\theta}}
\frac{q_{-n-j-\theta}q_{j-n}}{(2j+\theta)^{2k}-(2n+\theta)^{2k}}, \label{c12}\\
&c_{21}=(Q\Gamma_{m, bc}Qe_{-n-\theta}, e_n)=\frac{\omega^{2k}}{\pi^{2k}}\sum\limits_{\substack{j\ne n \\ j\ne -n-\theta}}
\frac{q_{n-j}q_{n+j+\theta}}{(2j+\theta)^{2k}-(2n+\theta)^{2k}}\label{c21}.
\end{flalign}

To continue, we need the following notation. For sequences of complex numbers $a_n$, $b_n$, $n\geq 1$, with $a_nb_n\ne 0$,
we define the following matrices
\[
U_n^{-1}=
\begin{pmatrix}
1 & 1 \\
-\sqrt{b_n/a_n} & \sqrt{b_n/a_n}
\end{pmatrix},
\quad U_n=
\begin{pmatrix}
1/2 & -\sqrt{a_n}/(2\sqrt{b_n}) \\
1/2 & \sqrt{a_n}/(2\sqrt{b_n})
\end{pmatrix}
\]
and note that
\[
U_n
\begin{pmatrix}
0 & a_n \\
b_n & 0
\end{pmatrix}
U_n^{-1}=
\begin{pmatrix}
-\sqrt{a_nb_n} & 0 \\
0 & \sqrt{a_nb_n}
\end{pmatrix},
\quad n\geq 1.
\]

Put $a_n=q_{-2n-\theta}+c_{12}$ and $b_n=q_{2n+\theta}+c_{21}$. Multiplying the both parts of (\ref{bn0+cn0H})
by $U_n$ from the left and by $U_n^{-1}$ from the right, we have
\begin{equation}\label{newa}
\mathcal{A}_n=
\begin{pmatrix}
c_{11} & 0 \\
0 & c_{22}
\end{pmatrix}
+
\begin{pmatrix}
-\sqrt{a_nb_n} & 0 \\
0 & \sqrt{a_nb_n}
\end{pmatrix}
+ U_n\mathcal{D}_nU_n^{-1}.
\end{equation}
Now we estimate the last term in (\ref{newa}). Using Theorem~\ref{th4-asymptotic} and formulas (\ref{cnpn}) and (\ref{bpn}),
we get
\begin{flalign}\label{pnx-bpnH}
\|U_n&\mathcal{D}_nU_n^{-1}\|_2\leq \|U_n\|_2\|U_n^{-1}\|_2\|\mathcal{D}_n\|_2\leq
C\|\mathbb{P}_n(X_*-B)\mathbb{P}_n\|_2 \nonumber\\
&\leq \frac{\omega^{2k}C}{2\pi^{2k}n(2n+\theta)^{2k-2}}
\|\mathbb{P}_nB-\mathbb{P}_nB\mathbb{P}_n\|_2\|B\mathbb{P}_n-\mathbb{P}_nB\mathbb{P}_n\|_2\leq
\frac{C\alpha(2n+\theta)}{n^{4k-3}}, \quad n\geq n_0,
\end{flalign}
where $n_0=\max\{m+1, (3\|B\|_2\omega^{2k}/(2\pi^{2k}))^{1/(2k-1)}\}$ and $\alpha$ is defined by (\ref{alpha}).

Next, arguing as in Lemma~\ref{lhvgv-gsH} (see the proof of (\ref{vgvpnH})), we obtain
\[
|c_{11}| = |(Q\Gamma_{bc}Qe_{-n-\theta}, e_{-n-\theta})| = \frac{\omega^{2k}}{\pi^{2k}}\bigg|\sum\limits_{\substack{j\ne n \\
j\ne -n-\theta}}\frac{q_{-n-j}q_{n+j}}{(2j+\theta)^{2k}-(2n+\theta)^{2k}}\bigg| \leq
\frac{\omega^{2k}\|q\|_2\alpha(2n+\theta)}{2\sqrt{3}\pi^{2k-1}(2n+\theta)^{2k-2}}.
\]

From (\ref{bn0+cn0H}), (\ref{newa}) and (\ref{pnx-bpnH}), we have
\begin{flalign}\label{mu-mu}
\Big|\widetilde{\lambda}_n^\mp&-\lambda_n + q_0 + c_{11}\pm \sqrt{a_nb_n}\Big|\leq
C\alpha(2n+\theta)/n^{4k-3},
\end{flalign}
where $a_n=q_{-2n-\theta}+c_{12}$ and $b_n=q_{2n+\theta}+c_{21}$. By using the above estimates, this implies the desired
asymptotic (\ref{lambdatheta0and1*H}) and completes the proof.

\textit{Proof of Theorem~\ref{thasympvarH}}. Since $q$ is a function of bounded variation, its Fourier coefficients
$q_n$, $n\in\mathbb{Z}$, satisfy the following condition (see \cite[Ch.~2, Theorem~4.12]{<Zigmund>}):
\[
|q_n|\leq C/(|n|+1), \quad n\in\mathbb{Z}.
\]
From this and (\ref{alpha}) it then follows that
\begin{equation}\label{estalpha}
\alpha(2n+\theta)\leq C/(|n|+1), \quad n\in\mathbb{Z}_+.
\end{equation}
To complete the proof it remains to apply Theorem~\ref{thasympt0and1H}.

\begin{remark} Evidently, Theorem~\ref{thasympvarH} holds for smooth potential $q$, i.e. for $q\in C^k[0, \omega]$, $k\ge 1$.
Moreover, it improves a similar result of H.~Menken (see \cite[Theorem~3.1]{<Menken>}), since the asymptotic in this theorem
contains the second term and, consequently, the remainder term in more precise form.
\end{remark}

\textit{Proof of Theorem~\ref{thasympt(0,1)H}}. Using Theorem~\ref{thsimilar-3H}, we transform the operator $L_{dir}$
to the operator $\widetilde{L}_{dir}^0-B$, where $B$ is defined by (\ref{v0thH}) and has the form (\ref{newB}). By
Theorem~\ref{thsimilar4H}, there exists $m\in\mathbb{N}$, $m\geq l+1$, such that the operator $L_{dir}$ is similar to
$\widetilde{L}_{dir}^0-J_{m, dir}X_*$, where $X_*$ is a solution of the equation (\ref{eq521H}).
Similarly to the proof of Theorem~\ref{thasympt0and1H} we conclude that the spectrum of these two operators is descrete.
Moreover, since projections $P_{n, dir}$, $n\geq m+1$, have rang one, the spectrum has the following form
\[
\sigma(L_{dir})=\sigma(\widetilde{L}_{dir}^0-J_{m, dir}X_*)=\sigma_{(m)}\cup \big(\bigcup_{n\geq m+1}
\big\{\sigma(\widetilde{L}_{dir}^0)-(X_*e_{n, dir}, e_{n, dir})\big\}\big),
\]
where $\sigma_{(m)}$ is the restriction of the operator $\widetilde{L}_{dir}^0-J_{m, dir}X_*$ to the subspace $\mathrm{Im}\,P_{(m)}$.

Now we obtain the asymptotic behavior of the eigenvalues $\widetilde{\lambda}_{n, dir}$, $n\geq m+1$, of the operator $L_{dir}$.
From (\ref{newB}) and (\ref{qij}) it follows that
\[
(X_*e_{n, dir}, e_{n, dir}) = (Be_{n, dir}, e_{n, dir}) +
((X_*-B)e_{n, dir}, e_{n, dir}) = (Q\Gamma_{m, dir}Qe_{n, dir}, e_{n, dir}) +\eta_{dir}(n)
\]
and
\[
(Q\Gamma_{m, dir}Qe_{n, dir}, e_{n, dir}) = \frac{\omega^{2k}}{2\pi^{2k}}\sum\limits_{\substack{j=1 \\ j\ne n}}^\infty
\frac{(\widetilde{q}_{|n-j|}-\widetilde{q}_{n+j})^2}{j^{2k}-n^{2k}},
\]
respectively.
Using Theorem~\ref{th3} and formulas (\ref{etapnbpn}), (\ref{bpndir}), and
(\ref{pndirb}), this implies that
\begin{flalign*}
|\eta_{dir}(n)| &= |((X_*-B)e_{n, dir}, e_{n, dir})|\leq \|P_{n, dir}(X_*-B)P_{n, dir}\|_2 \\
&\leq \frac{2\omega^{2k}}{\pi^{2k}n^{2k-1}}
\|P_{n, dir}B-P_{n, dir}BP_{n, dir}\|_2\|BP_{n, dir}-P_{n, dir}BP_{n, dir}\|_2\leq \frac{C\beta(2n)}{n^{4k-3}}, \quad n\geq n_1,
\end{flalign*}
where $n_1=\{m+1, (6\|B\|_2\omega^{2k}\pi^{-2k})^{1/(2k-1)}\}$ and $\beta$ is defined by (\ref{beta}).

Recall that the eigenvalues $\widehat{\lambda}_{n, dir}$ of the operator $\widetilde{L}_{dir}^0$ have the following form
\[
\widehat{\lambda}_{n, dir} = \lambda_{n, dir}-(Qe_{n, dir}, e_{n, dir}) = \Big(\frac{\pi n}{\omega}\Big)^{2k} -
\frac{1}{\omega}\int_0^\omega q(t)\,dt + \frac{1}{\omega}\int_0^\omega q(t)\cos\frac{2\pi n}{\omega}t\,dt.
\]

By using the above relations we get the asymptotic representation (\ref{lambdantheta01H}).

It remains to establish the asymptotic (\ref{qdirbondvar}) whenever $q$ is a function of bounded variation.
But in this case the proof is similar to the proof of Theorem~\ref{thasympvarH} and we omit it. This completes the proof.

\begin{remark}
Note that Theorem~\ref{thasympt(0,1)H} improves \cite[Theorem~1]{Ahmerova}, since the asymptotic in Theorem~\ref{thasympt(0,1)H}
contains the second term and the remainder term in more precise form.
\end{remark}

\textit{Proof of Theorem~\ref{thPPH}.} By Theorem~\ref{thsimilar-3H}, the operator $L_{bc}$ is similar to the operator
$\widetilde{L}_{bc}^0-B$ and
\begin{equation}\label{ulu}
L_{bc}(I+\Gamma_{l, bc}Q)=(I+\Gamma_{l, bc}Q)(\widetilde{L}_{bc}^0-B),
\end{equation}
where $B$ is defined by (\ref{v0thH}) and has the form (\ref{newB}). Taking into account Theorem~\ref{thsimilar4H} and
conditions (\ref{9-estimate}), (\ref{10-estimate}), there exists a number $m\in\mathbb{J}$, $m\geq l+1$, such that the operator
$L_{bc}$ is similar to $\widetilde{L}_{bc}^0-J_{m, bc}X_*$, where $X_*$ is a solution of the equation~(\ref{eq521H}). Then
\begin{equation}\label{ulu2}
(\widetilde{L}_{bc}^0-B)(I+\Gamma_{m, bc}X_*)=(I+\Gamma_{m, bc}X_*)(\widetilde{L}_{bc}^0-J_{m, bc}X_*).
\end{equation}
Denote by $\mathcal{U}$ and $\mathcal{V}$ the operators $\Gamma_{l, bc}Q$ and
$\Gamma_{m, bc}X_*$, respectively. Then the equalities (\ref{ulu}) and (\ref{ulu2}) imply that
\begin{equation}\label{lbc}
L_{bc}= (I+\mathcal{U})(I+\mathcal{V})(\widetilde{L}_{bc}^0 -J_{m, bc}X_*)(I+\mathcal{V})^{-1}(I+\mathcal{U})^{-1}.
\end{equation}

Let $\Omega$ be a subset of $\{m+1, m+2, \dots\}$. Put $\Delta=\Delta(\Omega)=\cup_{n\in\Omega}\{\lambda_n\}$ and
$\widetilde{\Delta}=\widetilde{\Delta}(\Omega)=\cup_{n\in\Omega}\sigma_n$, where $\sigma_n$ is defined by~(\ref{sigmamsigmanH}).
If $bc=dir$, then $\Delta=\Delta(\Omega)=\cup_{n\in\Omega}\{\lambda_{n, dir}\}$ and
$\widetilde{\Delta}=\widetilde{\Delta}(\Omega)=\cup_{n\in\Omega}\sigma_n$, where
$\sigma_n=\{(X_*e_{n, dir}, e_{n, dir})\}$, $n\in\Omega$.
For these sets we define the projections $P(\widetilde{\Delta}, L_{bc})$ and $P(\Delta, L_{bc}^0)$
(see Section~\ref{sec1}). Applying \cite[Lemma~1]{<2011>} and using (\ref{lbc}), we have
\begin{equation}\label{pupH}
P(\widetilde{\Delta}, L_{bc})=(I+\mathcal{U})(I+\mathcal{V})P(\Delta, L_{bc}^0)(I+\mathcal{V})^{-1}(I+\mathcal{U})^{-1}.
\end{equation}

Next, from Theorems~\ref{thsimilar-3H} and \ref{thsimilar4H}  it follows that
\[
\|\mathcal{U}\|_2=\|\Gamma_{l, bc}Q\|_2\leq 1/2, \quad \|\mathcal{V}\|_2=\|\Gamma_{m, bc}X_*\|_2\leq 1/2.
\]
Using these inequalities and representation (\ref{pupH}), we obtain
\begin{flalign*}
P(\widetilde{\Delta}, L_{bc}) &- P(\Delta, L_{bc}^0) =
\mathcal{U}P(\Delta, L_{bc}^0) + \mathcal{V}P(\Delta, L_{bc}^0) + \mathcal{U}\mathcal{V}P(\Delta, L_{bc}^0) \\
&+ \Big(\mathcal{U}P(\Delta, L_{bc}^0) + \mathcal{V}P(\Delta, L_{bc}^0) +
\mathcal{U}\mathcal{V}P(\Delta, L_{bc}^0)\Big)\Big(\sum_{j=1}^\infty \mathcal{V}^j
+ \sum_{j=1}^\infty\mathcal{U}^j + \sum_{j=1}^\infty\mathcal{V}^j\sum_{j=1}^\infty\mathcal{U}^j\Big).
\end{flalign*}
Then
\begin{flalign*}
&\|P(\widetilde{\Delta}, L_{bc}) - P(\Delta, L_{bc}^0)\|_2\leq
\|\mathcal{U}P(\Delta, L_{bc}^0)\|_2 + \|\mathcal{V}P(\Delta, L_{bc}^0)\|_2 \\
&+ \|\mathcal{U}\|_2\|\mathcal{V}P(\Delta, L_{bc}^0)\|_2 +
\Big(\|\mathcal{U}P(\Delta, L_{bc}^0)\|_2 + \|\mathcal{V}P(\Delta, L_{bc}^0)\|_2 \\
&+ \|\mathcal{U}\|_2\|\mathcal{V}P(\Delta, L_{bc}^0)\|_2\Big)\Big(\sum_{j=1}^\infty\|\mathcal{V}\|_2^j
+\sum_{j=1}^\infty\|\mathcal{U}\|_2^j +
\sum_{j=1}^\infty\|\mathcal{V}\|_2^j\sum_{j=1}^\infty\|\mathcal{U}\|_2^j\Big) \\
&\leq 4\|\mathcal{U}P(\Delta, L_{bc}^0)\|_2 + 6\|\mathcal{V}P(\Delta, L_{bc}^0)\|_2.
\end{flalign*}

To continue, we estimate $\|\mathcal{U}P(\Delta, L_{bc}^0)\|_2$ and $\|\mathcal{V}P(\Delta, L_{bc}^0)\|_2$. Let $bc\in\{per, ap\}$
and $d(\Omega)=\min\limits_{n\in \Omega}n$. Using~(\ref{5-gamma}), we get
\[
\|\mathcal{U}P(\Delta, L_{bc}^0)\|_2^2=\sum_{n\geq d(\Omega)}\|(\Gamma_{l, bc}Q)\mathbb{P}_n\|_2^2\leq
\frac{\omega^{4k}\|q\|^2_2}{2\pi^{4k}}\sum_{n\geq d(\Omega)}\frac{1}{(2n+\theta)^{4k-4}(2n+1)}\leq \frac{C}{d^{4k-3}(\Omega)}.
\]
By similar arguments, the same estimation holds for $\|\mathcal{V}P(\Delta, L_{bc}^0)\|_2$.  Therefore,
\[
\|P(\widetilde{\Delta}, L_{bc})-P(\Delta, L_{bc}^0)\|_2\leq C/d^{2k-\frac{3}{2}}(\Omega),
\]
which completes the proof for $bc\in\{per, ap\}$.

For $bc=dir$ the proof is similar and we omit it. This proves Theorem~\ref{thPPH}.

\textit{Proof of Theorem~\ref{thsemigroup1H}}. By Theorem~\ref{thsimilar4H}, the operator $L_{bc}$, $bc\in\{per, ap, dir\}$,
is similar to $\widetilde{L}_{bc}^0-J_{m, bc}X_*$. Since $J_{m, bc}X_*$ is bounded and $-\widetilde{L}_{bc}^0$ is
sectorial (see \cite[Theorem~1.3.2]{Henry}), the operator $-\widetilde{L}_{bc}^0+J_{m, bc}X_*$ is sectorial
(see~\cite[Sec.~5]{PolyakovAA}). Hence, the operator $-L_{bc}$ is sectorial and generates an analytic semigroup of operators
$T: \mathbb{R}_+\to \mathrm{End}\,\mathcal{H}$. By Theorem~\ref{thsimilar4H}, there exists $m\in\mathbb{J}$ such that
this semigroup is similar to a semigroup $\widetilde{T}: \mathbb{R}_+\to\mathrm{End}\,\mathcal{H}$ of the form
\[
\widetilde{T}(t)=T_{(m)}(t)\oplus T^{(m)}(t), \quad t\in\mathbb{R}_+,
\]
acting in $\mathcal{H}=\mathcal{H}_{(m)}\oplus\mathcal{H}^{(m)}$. Here
$\mathcal{H}_{(m)}=\mathrm{Im}\,\mathbb{P}_{(m)}$, $\mathcal{H}^{(m)}=\mathrm{Im}\,(I-\mathbb{P}_{(m)})$ for $bc\in\{per, ap\}$ and
$\mathcal{H}_{(m)}=\mathrm{Im}\,P_{(m)}$, $\mathcal{H}^{(m)}=\mathrm{Im}\,(I-P_{(m)})$ for $bc=dir$.

If $bc=dir$, then by \cite{Henry} the semigroup $T^{(m)}: \mathbb{R}_+\to \mathrm{End}\,\mathcal{H}^{(m)}$ has the
representation
\[
T^{(m)}(t)x=\sum_{s\geq m+1}e^{-\widetilde{\lambda}_{s, dir}t}P_{s, dir}x, \quad x\in\mathcal{H},
\]
where $\widetilde{\lambda}_{s, dir}$ is defined in (\ref{lambdantheta01H}).

Let $bc\in\{per, ap\}$. To continue, we need the following result (see \cite[Ch.~1, Sec.~6, problem~2]{<Kirillov>}).
\begin{lemma}\label{lhsemigroup}
Let $\mathcal{A}$ be a matrix of the form
$
\begin{pmatrix}
a & b \\
c & d
\end{pmatrix}
$.
Then
\[
e^{\mathcal{A}t}=e^{\frac{a+d}{2}}\bigg(\mathrm{ch}(\rho t)
\begin{pmatrix}
I & 0 \\
0 & I
\end{pmatrix}
+\frac{\mathrm{sh}(\rho t)}{\rho}
\begin{pmatrix}
\frac{a-d}{2} & b \\
c & \frac{d-a}{2}
\end{pmatrix}
\bigg),
\]
where $\rho=\sqrt{(a-d)^2/2+bc}$.
\end{lemma}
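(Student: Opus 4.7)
\textit{Proof proposal for Lemma~\ref{lhsemigroup}.} The plan is to reduce the computation of $e^{\mathcal{A}t}$ to that of a traceless $2\times 2$ matrix, whose powers form a very regular pattern. First I would split
\[
\mathcal{A} = \frac{a+d}{2}I + \mathcal{B}, \qquad
\mathcal{B} = \begin{pmatrix} \frac{a-d}{2} & b \\ c & \frac{d-a}{2} \end{pmatrix}.
\]
Since the scalar part $\frac{a+d}{2}I$ commutes with everything, in particular with $\mathcal{B}$, one has $e^{\mathcal{A}t} = e^{\frac{a+d}{2}t}\, e^{\mathcal{B}t}$, and the problem reduces to computing $e^{\mathcal{B}t}$.

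The key algebraic step is to verify by direct multiplication that $\mathcal{B}^2 = \rho^2 I$. The diagonal entries both equal $\bigl(\frac{a-d}{2}\bigr)^2 + bc$, which is the quantity denoted by $\rho^2$, while the off-diagonal entries vanish because $\frac{a-d}{2}\cdot b + b\cdot\frac{d-a}{2} = 0$ and $c\cdot\frac{a-d}{2} + \frac{d-a}{2}\cdot c = 0$. An immediate induction then gives $\mathcal{B}^{2k} = \rho^{2k}I$ and $\mathcal{B}^{2k+1} = \rho^{2k}\mathcal{B}$ for all $k\geq 0$.

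The final step is to substitute these into the Taylor series $e^{\mathcal{B}t} = \sum_{n\geq 0} t^n \mathcal{B}^n/n!$ and separate the sum over even and odd indices. The even part collapses to $\bigl(\sum_{k\geq 0}(\rho t)^{2k}/(2k)!\bigr)\,I = \mathrm{ch}(\rho t)\, I$, and the odd part to $\bigl(\rho^{-1}\sum_{k\geq 0}(\rho t)^{2k+1}/(2k+1)!\bigr)\,\mathcal{B} = (\mathrm{sh}(\rho t)/\rho)\,\mathcal{B}$. Multiplying by the scalar factor $e^{(a+d)t/2}$ then yields the stated formula.

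There is essentially no obstacle here; the only step that requires care is the $2\times 2$ algebra showing that $\mathcal{B}^2$ is a scalar multiple of $I$, which simultaneously pins down the precise value of $\rho^2$ (matching the expression under the square root in the statement up to the standard Cayley--Hamilton identity for a traceless $2\times 2$ matrix, whose determinant equals $-\rho^2$). Everything else is a direct manipulation of convergent power series, justified because $\mathcal{B}$ acts on a finite-dimensional space.
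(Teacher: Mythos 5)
Your argument is correct and complete: the splitting $\mathcal{A}=\frac{a+d}{2}I+\mathcal{B}$ into a scalar part plus a traceless part, the identity $\mathcal{B}^2=\rho^2 I$, and the even/odd separation of the exponential series together constitute the standard derivation. Note that the paper itself gives no proof of this lemma --- it is quoted from Kirillov's book (Ch.~1, Sec.~6, problem~2) --- so there is no in-paper argument to compare against; your write-up supplies the missing verification.

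One point you should make explicit rather than pass over: your computation gives $\rho^2=\bigl(\frac{a-d}{2}\bigr)^2+bc=(a-d)^2/4+bc$, whereas the lemma as printed defines $\rho=\sqrt{(a-d)^2/2+bc}$. These disagree. Since $\mathcal{B}^2$ is unambiguously $\bigl(\bigl(\frac{a-d}{2}\bigr)^2+bc\bigr)I$ (equivalently $-\det\mathcal{B}$, by Cayley--Hamilton for a traceless $2\times 2$ matrix), your value is the correct one, and the $1/2$ in the statement is a typo for $1/4$. At present you assert that the diagonal entries of $\mathcal{B}^2$ equal ``the quantity denoted by $\rho^2$,'' which silently substitutes the corrected value; it would be cleaner to state outright that the lemma's $\rho$ should read $\sqrt{(a-d)^2/4+bc}$. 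For the same reason the scalar prefactor in the displayed formula should be $e^{\frac{a+d}{2}t}$ rather than $e^{\frac{a+d}{2}}$ --- the $t$ has been dropped. Neither issue affects the validity of your argument, but both corrections are worth recording since the formula is applied later in the proof of Theorem~\ref{thsemigroup1H}.
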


Put $\mathcal{A}=\pi^{2k}(2n+\theta)^{2k}I_n/\omega^{2k}-\mathcal{A}_n$, where $\mathcal{A}_n$ is
the matrix of the restriction of the operator $J_{m, bc}(\mathbb{P}_nX_*\mathbb{P}_n)$ to the subspace $\mathcal{H}_n$ in the basis
$e_{-n-\theta}$, $e_n$. Applying Lemma~\ref{lhsemigroup} and the representation (\ref{bn0+cn0H}), we get
\begin{flalign}\label{tmrepresentH}
\widetilde{T}^{(m)}(t)x &= \sum_{s\geq m+1} e^{-t\lambda_n + q_0t + tc_{11}}e^{a_st}\Big(\mathrm{ch}b_st +
\frac{\mathrm{sh}b_st}{b_s}B_s\Big)\mathbb{P}_sx, \quad x\in\mathcal{H},
\end{flalign}
where $\lambda_n=\pi^{2k}(2n+\theta)^{2k}/\omega^{2k}$, $b_s=\sqrt{(q_{-2s-\theta}+ c_{12})(q_{2s+\theta} + c_{21})+\varepsilon_s}$
with $c_{11}$, $c_{12}$, $c_{21}$, defined by formulas (\ref{c11})~--~(\ref{c21}), and some constants $\varepsilon_s$, and
$B_s\in\mathrm{End}\,\mathcal{H}$ with $B_sx=0$ for all $x\in\mathrm{Ker}\,\mathbb{P}_s$, $s\geq m+1$. Let us represent the
operators $B_s$ in the form $B_s=B_s^0+B_s'$, where the operators $B_s^0$ are defined by the following relations:
$B_s^0e_{-s-\theta}=(q_{2s+\theta} + c_{21})e_s$,
$B_s^0e_s=(q_{-2s-\theta}+c_{12})e_{-s-\theta}$, and $B_s^0x=0$ for all $x\in\mathrm{Ker}\,\mathbb{P}_s$.
Then, by the proof of Theorem~\ref{thasympt0and1H} (see inequality (\ref{mu-mu})), the sequences of complex numbers $a_s$ and
$\varepsilon_s$, $s\geq m+1$, and the sequence
of operators $B_s'$, $s\geq m+1$, satisfy the estimate:
\[
\max\{|a_s|, |\varepsilon_s|, \|B_s'\|_2\}\leq C\alpha(2s+\theta)/s^{4k-3}, \quad s\geq m+1,
\]
which completes the proof.

\end{document}